\tikzset{graph-vertex/.style={circle,minimum width=7pt,draw,inner sep=1pt}}
\setlist[enumerate,1]{label = (\arabic*)}
\newtheorem{thm}{Theorem}[section]
\newtheorem{lem}[thm]{Lemma}
\newtheorem{cor}[thm]{Corollary}
\newtheorem{prop}[thm]{Proposition}
\newtheorem{obs}[thm]{Observation}
\theoremstyle{definition}
\newtheorem{q}[thm]{Question}
\newtheorem{defn}[thm]{Definition}
\newtheorem{ex}[thm]{Example}
\numberwithin{equation}{section}
\newcommand{\Y}{\mathcal{Y}}
\DeclareMathOperator{\mr}{mr}
\DeclareMathOperator{\tri}{tri}
\DeclareMathOperator{\tw}{tw}
\newcommand{\gqual}[1]{\mathcal{S}(#1)}
\newcommand{\Sznz}{\mathcal{S}_{\mathrm{znz}}}
\newcommand{\defterm}[1]{\emph{#1}}
\title{Relationships between minimum rank problem parameters for cobipartite graphs}
\author{Louis Deaett}
\address{Department of Mathematics and Statistics, Quinnipiac University, Hamden, CT 06518, USA (louis.deaett@quinnipiac.edu)}
\author{Derek Young}
\address{Department of Mathematics and Statistics, Mount Holyoke College, South Hadley, MA 01075, USA (dyoung@mtholyoke.edu)}
\keywords{minimum rank; inverse eigenvalue problem; matrix pattern; zero forcing; maximum nullity}
\subjclass[2020]{Primary: 05C50; Secondary: 15B35}
\begin{document}

\begin{abstract}
For a simple graph, the minimum rank problem is to determine the smallest rank among the symmetric matrices whose off-diagonal nonzero entries occur in positions corresponding to the edges of the graph. Bounds on this minimum rank (and on an equivalent value, the maximum nullity) are given by various graph parameters, most notably the zero forcing number and its variants. For a matrix, replacing each nonzero entry with the symbol \(\ast\) gives its zero-nonzero pattern.  The associated minimum rank problem is to determine, given only this pattern, the smallest possible rank of the matrix. The most fundamental lower bound on this minimum rank is the triangle number of the pattern. A cobipartite graph is the complement of a bipartite graph; its vertices can be partitioned into two cliques. Such a graph corresponds to a zero-nonzero pattern in a natural way.  Over an infinite field, the maximum nullity of the graph and the minimum rank of the pattern obey a simple relationship.  We show that the zero forcing number of the graph and the triangle number of the pattern follow this same relationship. This has implications for the relationship between the two minimum rank problems.  We also explore how, for cobipartite graphs, variants of the zero forcing number and other parameters important to the minimum rank problem are related, as well as how, for graphs in general, these parameters can be interpreted in terms of the zero-nonzero patterns of the symmetric matrices associated with the graph.
\end{abstract}

\maketitle

\section{Introduction}\label{sec:intro}

The question of how a combinatorial description of a matrix allows bounds to be
placed on its rank is a well-studied one (see, e.g., \cite{not_necessarily_symmetric,fgggjlsz2018,hs93,johnson_and_link,johnson_and_zhang}).
Our work here concerns two different variants of this problem, one for zero-nonzero patterns, and one for simple graphs.

The \defterm{zero-nonzero pattern} of a matrix is a specification of exactly which entries of the matrix are zero and which are nonzero.
More formally, we say that a \defterm{zero-nonzero pattern matrix} is a matrix with entries from the set $\{0,*\}$.
Where no confusion may result, we refer to a zero-nonzero pattern matrix as just a \defterm{zero-nonzero pattern} or even simply as just a \defterm{pattern}, when the context makes it clear what is meant.

Given a matrix $A$, the \defterm{zero-nonzero pattern of} $A$ is the matrix that results from replacing each nonzero entry in $A$ with a $*$.  
Given a zero-nonzero pattern matrix $\Y$, the \defterm{minimum rank} of $\Y$ over the field $\mathbb F$ is the smallest rank of a matrix with entries in $\mathbb F$ and the zero-nonzero pattern $\Y$.  
We write $\mr(\Y)$ to denote the minimum rank of $\Y$ over the real numbers or some other field given in context.

The problem of relating the combinatorics of a pattern to its minimum rank is known as the minimum rank problem for zero-nonzero patterns.  For this problem, a fundamental lower bound is provided by the \defterm{triangle number} of the pattern, which we now define.

\begin{defn}\label{def:triangle submatrix and triangle number}
A square zero-nonzero pattern is said to be a \defterm{triangle}
if some permutation of its rows, followed by some (independent) permutation of its
columns results in a pattern that is lower-triangular with only $\ast$ entries
on its diagonal. The \emph{triangle number} of a pattern \( \Y \),
denoted by \( \tri(\Y) \), is
the size of a largest submatrix of \( \Y \) that is a triangle.
\end{defn}

A natural question then becomes:\ Under what conditions is the minimum rank of a pattern actually given by its triangle number?  For example, this question was the focus of \cite{johnson_and_canto, johnson_and_link,  johnson_and_zhang}.  In examining small examples, equality between the minimum rank and the triangle number seems common, and the phenomenon whereby a gap may exist between the two remains poorly understood.

A different minimum rank problem,
the minimum rank problem for graphs,
starts with a simple graph $G$ and asks for the smallest rank of a symmetric matrix whose zero-nonzero pattern off the diagonal matches that of the adjacency matrix of $G$, a value called the \defterm{minimum rank} of $G$.  (See \cite[Section 2.1]{hls22} and the survey \cite{fh07}.)  In terms of the
inverse eigenvalue problem for graphs \cite{hogben2005}, this is equivalent to finding the maximum multiplicity of an eigenvalue of a matrix meeting that combinatorial description, a value equal to the maximum nullity of such a matrix.
This is called the \defterm{maximum nullity} of $G$.

Connecting the two minimum rank problems described above is the chief motivation for the present paper, which proceeds as follows.
Section \ref{sec:preliminaries} establishes definitions and terminology needed throughout the paper, and recalls some inequalities relating various combinatorial parameters of a graph that play a role in bounding its minimum rank.
We also introduce the class of cobipartite graphs, those with a complement that is bipartite.

Section \ref{sec:triangles-and-forcing-sets} explores how
triangles in the zero-nonzero patterns of the symmetric matrices associated with a graph correspond to zero forcing sets for the graph that obey certain constraints.  A number of zero forcing parameters are ones we can interpret in terms of the largest size of a triangle in such a pattern that is in some way independent of the diagonal entries of the pattern.

Section \ref{sec:triangles and forcing of partitioned graphs} examines graphs in which the edges that cross some partition correspond to the $\ast$ entries in a given pattern.  We find that triangles within the pattern correspond to zero forcing sets from which the forcing process can proceed in a way that respects the partition.

Starting with Section \ref{sec:cobipartite}, the focus is on graphs in which each side of the partition induces a clique, graphs that are cobipartite.  
Here we introduce an important nondegeneracy condition, that of the graph being \defterm{saturated}.
\thref{thm:triangle number versus ZF for cobipartite} shows that when a cobipartite graph satisfies this condition, the triangle number of the corresponding pattern and the zero forcing number of the graph are essentially equivalent; each can be expressed as the number of vertices minus the other.  We use this to show that, over an infinite field, whether the zero forcing number of such a graph is equal to its maximum nullity is equivalent to whether the minimum rank of the pattern is equal to its triangle number.  This yields a new example of a $15$-vertex graph (the smallest size known) whose maximum nullity differs from its zero forcing number over every field.
Also in Section \ref{sec:cobipartite}, we explore how zero forcing parameters beyond the standard zero forcing number behave for cobipartite graphs, and find that the standard zero forcing number, the enhanced zero forcing number, and the positive semidefinite zero forcing number are all equal.  The final part of Section \ref{sec:cobipartite} examines the treewidth parameter for cobipartite graphs.  We show that this need not coincide with the zero forcing parameters just mentioned, but that for the special case of cobipartite $k$-trees, all of them do in fact coincide, and equal the maximum nullity of the graph.

Finally, in Section \ref{sec:unsaturated}, we consider what remains true when a cobipartite graph is not saturated, meaning that some vertices lack a neighbor on the other side of the partition.
There, the main result is \thref{thm:remove_unsaturated_vertices_effect_on_Z}, which shows exactly how such vertices affect the zero forcing number.
On the other hand, the precise effect of these vertices on the maximum nullity seems harder to determine, and we offer this as an enticing open question.
In any case, our results show that the equivalence given by \thref{thm:triangle number versus ZF for cobipartite} holds if and only if the cobipartite graph is in fact saturated.

\section{Preliminaries}\label{sec:preliminaries}
A \emph{loop graph} is a graph that allows loops (but not multiple edges).
Throughout what follows, every graph is a simple graph (i.e., without loops or multiple edges) except when specifically indicated otherwise.  In particular, we often let $\hat G$ be a \defterm{looping} of some specific simple graph $G$, meaning that $\hat G$ is a loop graph produced from $G$ by adding loops to some of the vertices of $G$.
For a graph $G$, we write $|G|$ to denote the number of vertices in the graph.

The \emph{set of symmetric matrices over $\mathbb F$ described by the graph \( G \)}, denoted by
\( \mathcal{S}(G,\mathbb F) \), is the set of symmetric matrices over $\mathbb F$ whose \( (i,j) \)-entry is nonzero if \(
\{ i,j\} \) is an edge in \( G \) and \( 0 \) otherwise, when \( i \neq j \).
When we write only $\gqual{G}$,
we mean to denote $\mathcal S(G, \mathbb F)$ where $\mathbb F$ is the field of real numbers, or some other field given in context.
The \emph{minimum rank of a graph \( G \) over the field \( \mathbb{F} \)} is the minimum rank over \(
\mathcal{S}(G,\mathbb F)  \). We use \(
\mr(G) \) for the minimum rank over the real numbers
or some other field given in context. Similarly, the \emph{maximum nullity of a graph \(
G \) over the field \( \mathbb{F} \)} is the maximum nullity over \(
\mathcal{S}(G,\mathbb F)  \). We use \(
M(G) \) to represent the maximum nullity over the real numbers
or some other field given in context. The \emph{positive semidefinite minimum rank of a
graph \( G \)}, denoted by \( \mr_{+}(G) \), is the minimum rank
over all positive semidefinite matrices of \( \mathcal{S}(G,\mathbb R) \). Similarly, the
\emph{positive semidefinite maximum nullity of a graph \( G \)}, denoted by \(
M_{+}(G) \), is the maximum nullity over all positive semidefinite
matrices of \( \mathcal{S}(G,\mathbb R) \).

Let \( G \) be a simple graph such that every vertex is labeled as filled or
unfilled. The \emph{zero forcing rule} for \( G \) is defined as follows:
a filled vertex \( v \) forces an unfilled vertex \( u \) to be filled if \(
u \) is the only neighbor of \( v \) which is unfilled. 
A set \( F \subseteq V(G) \) is a \emph{zero forcing set} of $G$ if \( V(G) \) is
completely filled after some number of repeated applications of the zero forcing rule. The
\emph{zero forcing number} of a graph \( G \), denoted by \( Z(G) \), is the smallest cardinality of a zero forcing set for \( G \).

The \defterm{positive semidefinite zero forcing rule} for $G$ is defined as follows: considering the subgraph of $G$ formed by deleting all of its filled vertices, if a filled vertex $v$ has just one neighbor $u$ in some connected component of that subgraph, then $v$ can force $u$.
A set \( F \subseteq V(G) \) is a \emph{positive semidefinite zero forcing set} if \( V(G) \) is
completely filled after
some number of repeated applications of the positive semidefinite zero forcing rule.
The
\emph{positive semidefinite zero forcing number} of a graph \( G \), denoted by \( Z_+(G) \), is the smallest cardinality of a positive semidefinite zero forcing set for \( G \).

The \defterm{loop zero forcing rule} for a loop graph is the same as the ordinary zero forcing rule, but without the requirement that a vertex needs to be filled in order to force.
A set of vertices is a \emph{loop zero forcing set} if every vertex in the loop graph becomes filled after
some number of repeated applications of the loop zero forcing rule.
By the \defterm{zero forcing number} of a loop graph we mean the smallest size of a loop zero forcing set for that loop graph.

A separate zero forcing parameter that is defined for a simple graph, but in terms of loopings of that graph, is the \defterm{enhanced zero forcing number}, denoted $\hat Z(G)$, which is the maximum value of the zero forcing number of a looping $\hat G$ of $G$.

We now define some notation and terms for all zero forcing rules. In the case where
\( v \) forces \( u \) to become filled, we say \( v \) \defterm{forces} \( u \), 
or write \(
v \to u \), 
and refer to this as a single \defterm{force} (or \defterm{standard force}).
We note that any force that could occur under the zero forcing rule could also occur under the positive semidefinite zero forcing rule.  Of course, the converse is not true.  When we wish to indicate that a particular force could occur under the positive semidefinite zero forcing rule but not under the ordinary zero forcing rule, we refer to the force as a \defterm{strictly positive semidefinite force}.

The following notions, originally introduced for simple graphs in \cite{ZF_parameters_and_min_rank}, are used here with slightly different terminology.
A \emph{forcing sequence} is a (possibly empty) sequence of
forces.
A \emph{forcing sequence from \( F \subseteq V(G) \)} (with respect to some fixed forcing rule) is a sequence of forces, say of length $k$, such that, starting with $F$ as the initially filled set, for all $i$ with $1 \le i \le k$, after the first $i-1$ forces in the sequence have occurred, the chosen forcing rule allows the $i$th force to occur.

A \defterm{complete forcing sequence from \( F \)} is a forcing sequence from \(F\) after which no further forces are possible.
When there is a complete forcing sequence from \(F\) in which the vertices that force are exactly those in \( R \subseteq V(G) \), we say that \( F \) is a set from which \( R \) \defterm{can be the set of vertices that force}.

Note that if $F$ is a zero forcing set, then by definition a complete forcing sequence from \( F \) ends with all vertices filled.
An
\emph{optimal zero forcing set} is a zero forcing set of smallest cardinality
among all zero forcing sets. An \emph{optimal forcing sequence} is a
complete forcing sequence from an optimal zero forcing set.

Every forcing sequence gives rise to a collection of \defterm{forcing chains}, where each forcing chain is a maximal sequence of vertices $(v_0,v_1,\ldots,v_k)$ with the property that, for each $i$ with $1\le i \le k$, the force $v_{i-1} \to v_i$ occurs somewhere in the forcing sequence.  By the maximality of the chain, the vertex $v_0$ must be initially filled.  Since each vertex can force only once, the forcing chains partition the set of vertices that are filled at the point where every force in the forcing sequence has occurred.  We refer to $k$ as the \defterm{length} of the forcing chain.  (So a chain of length $0$ corresponds to an initially filled vertex that never forces.)

The \emph{Hadwiger number} of a graph \( G \),
denoted by \( h(G) \), is the largest \( k \) such that \( G \) contains a \( k
\)-clique as a minor.
We write $\delta(G)$ for the smallest degree of a vertex in $G$, and use $\tw(G)$ to denote the treewidth of $G$, which we define in Section \ref{sec:cobipartite}.

\begin{thm}[{\cite[Figure 1.1]{tree-width-JGT}}]\th\label{treewidthdiagram}
For all graphs \( G \), 
\begin{equation}\label{eqn:parameters from diagram inequality}
\delta(G) \leq \tw(G) \leq Z_+(G) \le \hat Z(G) \le Z(G)
\end{equation}
and
\begin{equation}\label{eqn:M and Z ineq}
h(G)- 1 \leq M_{+}(G) \leq M(G) \le \hat Z(G) \le Z(G).
\end{equation}
\end{thm}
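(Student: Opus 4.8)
The plan is to verify the statement one covering inequality at a time, since both displayed chains are concatenations of pairwise comparisons: they share the tail $\hat Z(G) \le Z(G)$ and both culminate at $\hat Z(G)$. Several links are immediate from the definitions. The inequality $M_{+}(G) \le M(G)$ holds because the positive semidefinite matrices described by $G$ form a subset of all symmetric matrices described by $G$, so the maximum nullity over the smaller set cannot exceed that over the larger. The inequality $\delta(G) \le \tw(G)$ is the standard fact that a graph of treewidth $t$ is $t$-degenerate and hence has a vertex of degree at most $t$. For $h(G)-1 \le M_{+}(G)$ I would use the minor-monotonicity of $M_{+}$ (or a direct orthogonal-representation construction) together with $M_{+}(K_k) = k-1$, witnessed by the rank-one all-ones matrix $J_k \in \gqual{K_k}$: since $K_{h(G)}$ is a minor of $G$, this yields $h(G)-1 = M_{+}(K_{h(G)}) \le M_{+}(G)$.

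For the shared tail $\hat Z(G) \le Z(G)$, I would argue that every zero forcing set $F$ of $G$ is a loop zero forcing set of every looping $\hat G$ of $G$. Fix an optimal forcing sequence from $F$ in $G$ and replay it in $\hat G$ under the loop zero forcing rule: each force $v \to u$ in the original sequence is performed by a filled vertex $v$ whose unique unfilled neighbor is $u$, and since $v$ is filled, a loop at $v$ contributes no unfilled neighbor while the off-diagonal adjacencies of $\hat G$ agree with those of $G$. Hence the same force is legal in $\hat G$, so the replayed sequence fills $V(G)$ and $Z(\hat G) \le |F|$. Taking $F$ optimal and maximizing over loopings gives $\hat Z(G) \le Z(G)$.

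For the bridge $M(G) \le \hat Z(G)$, I would invoke the loop-graph analogue of the bound $M(G) \le Z(G)$: the loop zero forcing number of a loop graph $\hat G$ is at least the nullity of every symmetric matrix whose full zero-nonzero pattern, diagonal included, is recorded by $\hat G$. Taking a matrix $A \in \gqual{G}$ of nullity $M(G)$ and letting $\hat G$ be the looping of $G$ whose loops mark exactly the nonzero diagonal entries of $A$, this gives $M(G) = \mathrm{nullity}(A) \le Z(\hat G) \le \hat Z(G)$, since choosing the diagonal amounts to choosing the looping.

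This leaves the two deepest links, $\tw(G) \le Z_{+}(G)$ and $Z_{+}(G) \le \hat Z(G)$, which I expect to be the main obstacle. For $\tw(G) \le Z_{+}(G)$ I would build a tree decomposition of width at most $Z_{+}(G)$ from an optimal positive semidefinite forcing process, using the forcing chains together with the component condition exploited by the positive semidefinite rule to control the bag sizes; this is the technically hardest step and the one I would most expect to invoke as a known result. For $Z_{+}(G) \le \hat Z(G)$ I would seek a looping $\hat G$ whose loop zero forcing number dominates $Z_{+}(G)$ by translating the positive semidefinite rule's component condition into the loop setting, and if a direct translation proves awkward I would instead route the bound through maximum nullity. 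Establishing these clean combinatorial translations between the positive semidefinite rule and the loop setting is where the real work lies, whereas the remaining links reduce to the definitional and replay arguments above.
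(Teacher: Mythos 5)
The paper itself gives no proof of this theorem: it is quoted, with attribution, from Figure 1.1 of the cited treewidth paper, so every link in both chains is being invoked as a known result. Measured against that, your plan of verifying the easy links and citing the hard ones is reasonable, and several of your arguments are sound: $M_+(G)\le M(G)$ (subset of matrices), $\delta(G)\le \tw(G)$ (degeneracy), $\hat Z(G)\le Z(G)$ (the replay argument is correct --- a loop on a \emph{filled} vertex contributes no unfilled neighbor, so every standard force remains legal in every looping; this is the same observation the paper uses later in its Lemma 2.8), and $M(G)\le \hat Z(G)$ (the loop-graph nullity bound applied to the looping that matches the diagonal of an optimal matrix). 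Deferring $\tw(G)\le Z_+(G)$ to the literature is also fine, since the paper does exactly that.

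Two links, however, have genuine problems. First, your justification of $h(G)-1\le M_+(G)$ rests on ``minor-monotonicity of $M_+$,'' which is false: $M_+(P_3)=1$, yet deleting the middle vertex yields $2K_1$ with $M_+(2K_1)=2$, so even vertex deletion can increase $M_+$. (Your computation $M_+(K_k)=k-1$ is fine, but it cannot be transported along a minor relation this way; doing so for complete minors is essentially the statement to be proved.) The standard correct route is through the Colin de Verdi\`ere--type parameter $\nu(G)$, the maximum nullity of positive semidefinite matrices in $\mathcal S(G,\mathbb R)$ satisfying the Strong Arnold Hypothesis: $\nu$ \emph{is} minor monotone, $\nu(K_k)=k-1$, and $\nu(G)\le M_+(G)$, giving $h(G)-1\le\nu(G)\le M_+(G)$. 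Second, for $Z_+(G)\le\hat Z(G)$ you concede you have no argument, and your fallback of ``routing the bound through maximum nullity'' cannot work even in principle: $M_+$ and $M$ sit \emph{below} $Z_+$ and $\hat Z$, respectively, and no inequality between two parameters follows from inequalities they both dominate. This link, like $\tw(G)\le Z_+(G)$, requires its own combinatorial argument (for instance, exhibiting a looping whose loop zero forcing number is at least $Z_+(G)$); within this paper it is simply part of the cited figure. So the proposal is solid where it defers to known results, but the two places where you attempt substantive original arguments contain, respectively, a false claim and an unworkable fallback.
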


The work of \cite{not_necessarily_symmetric} established a connection between the minimum rank problem for zero-nonzero patterns and the minimum rank problem for graphs in the special case of graphs that satisfy the following definition.

\begin{defn}\thlabel{def:cobipartite graph}
A graph is said to be \defterm{cobipartite} if its complement is bipartite; equivalently, its vertex set can be partitioned into two cliques.
\end{defn}

In particular, for a cobipartite graph with a partition into cliques of sizes $m$ and $n$, an $m\times n$ zero-nonzero pattern can be associated to the graph such that the $\ast$ entries in the pattern occur in positions corresponding to the edges between the two cliques.  A result of \cite{not_necessarily_symmetric} then shows that the minimum rank of the pattern and the maximum nullity of the graph obey a simple relationship (see \thref{thm:not necessarily symmetric min rank result}).  We will show that in this situation, the bounds on those parameters given above, namely the triangle number of the pattern and the zero forcing number of the graph, obey this same relationship.

\section{Triangles and zero forcing sets}\label{sec:triangles-and-forcing-sets}

In this section, we 
explore how the most fundamental combinatorial lower bounds for the minimum rank problem for zero-nonzero matrix patterns and the minimum rank problem for simple graphs, given by the triangle number and the zero forcing number, respectively, share a strong relationship in their underlying combinatorics.
Specifically,
various zero forcing parameters can be given interpretations in terms of the presence, in zero-nonzero patterns associated with the graph, of triangular submatrices satisfying certain constraints.

Recall that the minimum rank problem for graphs is to determine, for a given simple graph $G$, the minimum rank
among all matrices in $\gqual{G}$.  Any given matrix in $\gqual{G}$ has a
specific zero-nonzero pattern; off the diagonal, this must match that of the
adjacency matrix of $G$, but on the diagonal it is unrestricted.  Hence, there are $2^{|G|}$ zero-nonzero patterns possible for $A$, given only that $A \in \gqual{G}$.  For notational convenience, we
define the set of all such zero-nonzero patterns as follows. 

\begin{defn}
For a graph $G$, let $\Sznz(G)$ denote the set
containing the $2^{|G|}$ distinct zero-nonzero patterns belonging to matrices in $\mathcal S(G)$.
\end{defn}

For each zero-nonzero pattern, its triangle number gives a lower bound on the rank of any matrix with that pattern.  Since we are interested in the smallest rank of a matrix in $\gqual{G}$, it is therefore natural to consider what information about this minimum rank can be obtained from the triangle numbers of the individual patterns in $\Sznz(G)$.  One possibility is to consider the minimum triangle number over all patterns in $\Sznz(G)$.  This value represents the best possible lower bound on the minimum rank of $G$ provided by the triangle numbers of the possible patterns alone.
This bound turns out to be exactly the information captured by the enhanced zero forcing number $\hat Z(G)$, as the following theorem shows.

\begin{thm}\th\label{thm:triangles-and-enhanced-ZF}
Let $G$ be a graph.  Then
\[
\hat Z(G) = |G| - \min\{ \tri(\mathcal A) : \mathcal A \in \Sznz(G) \}.
\]
\end{thm}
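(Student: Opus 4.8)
The plan is to reduce the statement to a single exact identity holding for each individual looping, and then pass to extremes. Concretely, I would prove that for every looping $\hat G$ of $G$,
\[
Z(\hat G) = |G| - \tri(\mathcal A),
\]
where $\mathcal A \in \Sznz(G)$ is the pattern whose diagonal records exactly which vertices of $\hat G$ carry loops. Assigning loops to a subset of $V(G)$ is the same data as choosing the diagonal of a pattern in $\Sznz(G)$, so loopings $\hat G$ and patterns $\mathcal A \in \Sznz(G)$ are in bijection. Granting the per-looping identity, the theorem is immediate, since taking the maximum over all loopings gives
\[
\hat Z(G) = \max_{\hat G} Z(\hat G) = \max_{\mathcal A \in \Sznz(G)} \bigl(|G| - \tri(\mathcal A)\bigr) = |G| - \min\{\tri(\mathcal A) : \mathcal A \in \Sznz(G)\}.
\]

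To establish the identity, write $n = |G|$ and first turn a loop forcing sequence into a triangle. Fix a loop zero forcing set $F$ together with a complete forcing sequence from $F$ that fills $V(G)$, and list its forces as $v_1 \to u_1, \dots, v_k \to u_k$ in the order performed. Because each vertex forces at most once and the forced vertices are exactly those of $V(G) \setminus F$, the rows $R = \{v_1, \dots, v_k\}$ and columns $C = \{u_1, \dots, u_k\} = V(G) \setminus F$ each have size $k = n - |F|$. Validity of the $i$th force says that $u_i$ is the unique unfilled neighbor of $v_i$ at that moment; since the vertices still unfilled just before force $i$ are exactly $u_i, \dots, u_k$, this translates precisely to $\mathcal A[v_i, u_i] = \ast$ and $\mathcal A[v_i, u_j] = 0$ for all $j > i$. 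Hence, ordering the rows by $v_1, \dots, v_k$ and the columns by $u_1, \dots, u_k$, the submatrix $\mathcal A[R, C]$ is lower triangular with an all-$\ast$ diagonal, i.e.\ a triangle of size $n - |F|$. Choosing $F$ optimal yields $\tri(\mathcal A) \ge n - Z(\hat G)$.

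For the reverse inequality I would run the construction backward. Given a triangle of size $k$, fix orderings $r_1, \dots, r_k$ of its rows and $c_1, \dots, c_k$ of its columns witnessing lower-triangularity, so $\mathcal A[r_i, c_i] = \ast$ and $\mathcal A[r_i, c_j] = 0$ for $j > i$. Set $F = V(G) \setminus \{c_1, \dots, c_k\}$ and attempt the forces $r_1 \to c_1, \dots, r_k \to c_k$ in order. Maintaining the invariant that just before the $i$th force the unfilled set is exactly $\{c_i, \dots, c_k\}$, the triangle conditions guarantee that $c_i$ is a neighbor of $r_i$ while no $c_j$ with $j > i$ is, so $c_i$ is the unique unfilled neighbor of $r_i$ and the force is legal. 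This exhibits $F$ as a loop zero forcing set of size $n - k$, giving $Z(\hat G) \le n - \tri(\mathcal A)$ and completing the identity. The main obstacle is purely the bookkeeping of the loop forcing rule: one must check that ``unique unfilled neighbor'' corresponds exactly to the pairing of a nonzero triangle diagonal with zeros strictly above it, and in particular that the correspondence behaves correctly in the two features special to loopings, namely that a forcing vertex $r_i$ need not itself be filled and that the case $r_i = c_i$ (a loop, i.e.\ a diagonal $\ast$ serving as a triangle diagonal entry) is permitted. Once this dictionary between a single force and a single triangle column is pinned down, both directions and the concluding extremal step are routine.
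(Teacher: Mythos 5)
Your proposal is correct and follows essentially the same route as the paper: the paper likewise reduces the theorem to the per-looping identity $Z(\hat G) = |G| - \tri(\mathcal A)$ (stated there as a corollary of a slightly more refined equivalence between triangles $\mathcal A[R,C]$ and loop zero forcing sets $V(G)\setminus C$ with forcing set $R$), proved by exactly your dictionary between the order of forces and the triangular ordering of rows and columns, and then passes to extremes over all loopings. Your handling of the two loop-specific subtleties (unfilled vertices may force, and a diagonal $\ast$ lets a vertex force itself) matches what the paper's argument implicitly relies on.
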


To prove the above theorem, we need to note that the enhanced zero forcing
number is somewhat different from other zero forcing parameters, in that it is not defined solely in terms of a particular zero forcing rule on the graph
$G$.  Instead, it is defined as the largest zero forcing number
over all loopings $\hat G$ of $G$.  These loopings correspond exactly to the
patterns in $\Sznz(G)$.  Hence, to prove
\th\ref{thm:triangles-and-enhanced-ZF}, it suffices to show that, for each such
looping, its zero forcing number is given by $|G|$ minus the triangle number of the corresponding zero-nonzero pattern.  This fact is a corollary (\thref{cor:triangle number and loop zero forcing relationship} below) of the following theorem, which gives a precise connection between zero forcing sets of a particular looping of $G$ and triangles in the zero-nonzero pattern corresponding to that looping.

\begin{thm}\th\label{thm:triangle-and-forcing-set-for-one-looping}
Let $\hat G$ be a looping of a simple graph $G$, let $\mathcal A$ be the zero-nonzero pattern of the (looped) adjacency matrix of $\hat G$, and let
$R$ and $C$ be subsets of $V(G)$ with $|R|=|C|$.  Then
$\mathcal A[R,C]$ is a triangle if and only if $V(G)\setminus C$ is a loop zero forcing set for $\hat G$ from which $R$ can be the set of vertices that force.
\end{thm}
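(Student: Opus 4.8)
The plan is to prove the biconditional through an explicit dictionary between the rows and columns of $\mathcal A[R,C]$ and the individual forces of a loop forcing sequence starting from $F := V(G)\setminus C$. The key observation is that a loop force $v\to u$ in $\hat G$ is valid exactly when $u$ is a neighbor of $v$, which is to say $\mathcal A[v,u]=*$, and every other neighbor of $v$ is already filled. Since $F$ is the initially filled set and, as noted in the excerpt, each vertex forces only once, any complete forcing sequence that fills all of $C$ must consist of exactly $|C|$ forces, each filling a distinct vertex of $C$, with the forcing vertices comprising exactly the $|R|=|C|$ vertices of $R$. Such a sequence therefore carries both a bijection $R\to C$ and a temporal order, and I would use this temporal order as the row and column ordering that witnesses triangularity, reading the argument in reverse for the converse.

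First I would treat the direction from forcing to triangle. Enumerating the forces in temporal order as $r_{(1)}\to c_{(1)},\dots,r_{(k)}\to c_{(k)}$, the set of unfilled vertices just before the $t$-th force is precisely $\{c_{(t)},\dots,c_{(k)}\}$. Validity of that force then yields $\mathcal A[r_{(t)},c_{(t)}]=*$ together with $\mathcal A[r_{(t)},c_{(s)}]=0$ for every $s>t$, since the vertices $c_{(s)}$ with $s>t$ are unfilled but are not the forced vertex. Ordering the rows of $\mathcal A[R,C]$ as $r_{(1)},\dots,r_{(k)}$ and the columns as $c_{(1)},\dots,c_{(k)}$ then exhibits it as lower triangular with $*$-diagonal. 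For the converse I would fix orderings of $R$ and $C$ witnessing that $\mathcal A[R,C]$ is a triangle and perform the forces $r_{(t)}\to c_{(t)}$ in increasing $t$, checking by induction that before step $t$ the unfilled set is exactly $\{c_{(t)},\dots,c_{(k)}\}$ and that the triangle conditions make $c_{(t)}$ the unique unfilled neighbor of $r_{(t)}$; as $F\cup C=V(G)$, the sequence is complete with forcing set exactly $R$.

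The step I expect to be the main obstacle is the bookkeeping around loops, which is exactly where the loop forcing rule diverges from ordinary zero forcing and where $R$ and $C$ may overlap. Here a vertex may force while still unfilled, and if it carries a loop it counts among its own unfilled neighbors and may even force itself. I would resolve this by a short case analysis on the status of $r_{(t)}$ at step $t$: if $r_{(t)}$ lies in $F$ or was filled by an earlier force there is nothing to check; if $r_{(t)}$ is filled only later, so that $r_{(t)}=c_{(s)}$ for some $s>t$, then the triangle entry $\mathcal A[r_{(t)},c_{(s)}]=0$ certifies that $r_{(t)}$ carries no loop and hence does not obstruct the force; and the remaining possibility $r_{(t)}=c_{(t)}$ is a genuine self-force licensed by the loop recorded in $\mathcal A[r_{(t)},c_{(t)}]=*$. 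The fortunate point is that the on- and above-diagonal entries of the triangle encode precisely the loop data needed to reconcile the two rules, so no separate treatment of loops beyond this case check should be necessary.
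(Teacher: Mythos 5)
Your proposal is correct and takes essentially the same approach as the paper's proof: in both directions you identify the temporal order of the forces with the row and column permutations witnessing triangularity, using the fact that adjacency in $\hat G$ (loops included) corresponds exactly to the $\ast$ entries of $\mathcal A$, so that the above-diagonal zeros encode precisely the non-adjacencies (and absent loops) needed for each force to be valid. Your closing case analysis on the status of $r_{(t)}$ just makes explicit the loop bookkeeping that the paper's proof handles implicitly through this same correspondence.
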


\begin{proof}
Let $k=|R|=|C|$.  Suppose first that $\mathcal A[R,C]$ is a triangle.  Then a permutation of the rows indexed by $R$ and a permutation of the columns indexed by $C$ exist such that, when these are applied to the submatrix $\mathcal A[R,C]$, the result is a matrix that is lower-triangular with only $*$ entries on its diagonal.

Let $v_1,\ldots,v_k$ be the vertices in $R$, in the order they would occur after this permutation of the rows is applied.  Similarly, let $v_{\pi(1)},\ldots,v_{\pi(k)}$ be the vertices in $C$, again in the order they would occur after the permutation of the columns is applied.

It now suffices to show that, from a state in which $V(G)\setminus C$ is the set of filled vertices, i.e., a state in which $v_{\pi(1)},\ldots,v_{\pi(k)}$ are precisely the vertices of $G$ that are not filled,
\begin{equation}\label{eqn:forces from triangle}
    v_1 \to v_{\pi(1)}, \ldots, v_k \to v_{\pi(k)}
\end{equation}
is a possible forcing sequence.  To this end, we make two observations for each $i \in \{1,\ldots,k\}$.  First, since the permuted submatrix has only $*$ entries on its diagonal, $v_i$ is adjacent to $v_{\pi(i)}$.  Second, since the permuted submatrix has only $0$ entries above its diagonal, $v_i$ is not adjacent to $v_{\pi(j)}$ for any $j > i$.  Hence, for each $i \in \{1,\ldots,k\}$, after the first $i-1$ forces in the list \eqref{eqn:forces from triangle} have occurred, $v_{\pi(i)}$ is the unique unfilled neighbor of $v_i$ in the graph, so that the $i$th force can also occur, which completes this direction of the proof.

Now let $R$ and $C$ be subsets of $V(G)$ such that $V(G)\setminus C$ is a loop zero forcing set from which a forcing sequence exists in which the vertices that force are exactly those in $R$.  In particular, let $v_1,\ldots,v_k$ be the vertices in $R$, listed in the order in which they force, and define $\pi(1),\ldots,\pi(k)$ so that $C=\{v_{\pi(1)},\ldots,v_{\pi(k)}\}$ and so that \eqref{eqn:forces from triangle} gives a forcing sequence from $V(G)\setminus C$.

Then, at the point when the force $v_i \to v_{\pi(i)}$ is about to occur, 
$v_{\pi(i)}$ must be the unique unfilled neighbor of $v_i$.  Since, at that same point, each
$v_{\pi(j)}$ for $j > i$ is unfilled (since it will be forced later on), it follows that $v_i$ is not adjacent to $v_{\pi(j)}$ for any $j > i$.  Hence, ${\mathcal A}_{i\pi(j)}=0$ for $j > i$.  And ${\mathcal A}_{i\pi(i)}=\ast$ holds because $v_i$ is adjacent to $v_{\pi(i)}$.  This shows that $\mathcal A[R,C]$ is a triangle, in particular because applying permutations so that the rows indexed by $v_1,\ldots,v_k$ and the columns indexed by $\pi(1),\ldots,\pi(k)$ occur in the order listed will produce a matrix that is lower-triangular with $\ast$ entries on its diagonal.
\end{proof}

From \th\ref{thm:triangle-and-forcing-set-for-one-looping}, the following corollary follows immediately.

\begin{cor}\th\label{cor:triangle number and loop zero forcing relationship}
Let $G$ be a graph, let $\hat G$ be a looping of $G$, and let
$\mathcal A \in \Sznz(G)$ be the zero-nonzero pattern of $\hat G$.  Then
the zero forcing number of $\hat G$ is given by $|G|-\tri(\mathcal A)$.
\end{cor}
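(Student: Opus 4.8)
The plan is to derive the corollary directly from the correspondence in \thref{thm:triangle-and-forcing-set-for-one-looping}, recasting the minimization that defines the zero forcing number as a maximization over the size of the complementary set $C = V(G)\setminus F$. Recall that the zero forcing number of $\hat G$ is the minimum size of a loop zero forcing set $F$; writing each such $F$ as $V(G)\setminus C$, minimizing $|F| = |G| - |C|$ is the same as maximizing $|C|$ over all sets $C$ for which $V(G)\setminus C$ is a loop zero forcing set for $\hat G$.

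First I would establish that, for any loop zero forcing set $F$, a complete forcing sequence from $F$ fills precisely the vertices of $C = V(G)\setminus F$, each exactly once, and that the set $R$ of vertices that force satisfies $|R| = |C|$. The key observation is that each vertex forces at most once: immediately after a vertex forces, the unique unfilled neighbor that justified the force becomes filled, leaving that vertex with no unfilled neighbor, so it can never force again. Since there are then exactly $|C|$ forces (one per forced vertex), each carried out by a distinct forcing vertex, we obtain $|R| = |C|$. This is the one step that requires care, since it is exactly what guarantees that the hypothesis $|R| = |C|$ of \thref{thm:triangle-and-forcing-set-for-one-looping} is met for every loop zero forcing set.

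Next I would apply \thref{thm:triangle-and-forcing-set-for-one-looping} in both directions. For subsets with $|R| = |C|$, that theorem gives that $\mathcal A[R,C]$ is a triangle if and only if $V(G)\setminus C$ is a loop zero forcing set from which $R$ can be the set of forcing vertices. Consequently, the sets $C$ arising as $V(G)\setminus F$ for loop zero forcing sets $F$ are exactly the column-index sets of triangle submatrices of $\mathcal A$, so that
\[
\max\{\, |C| : V(G)\setminus C \text{ is a loop zero forcing set for } \hat G \,\} = \tri(\mathcal A),
\]
the right-hand side being, by definition, the size of a largest triangle submatrix of $\mathcal A$.

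Combining these observations produces the zero forcing number of $\hat G$ as $|G| - \max|C| = |G| - \tri(\mathcal A)$, as claimed. I expect no genuine obstacle beyond the cardinality bookkeeping of the second paragraph: once $|R| = |C|$ is secured for every loop zero forcing set, the corollary is immediate, amounting to nothing more than reading \thref{thm:triangle-and-forcing-set-for-one-looping} as an identification of minimum-size forcing sets with maximum-size triangles.
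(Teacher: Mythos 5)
Your proposal is correct and follows essentially the same route as the paper, which simply deduces the corollary directly from \thref{thm:triangle-and-forcing-set-for-one-looping} by matching largest triangles with smallest loop zero forcing sets. The only difference is that you make explicit the cardinality bookkeeping (each vertex forces at most once, so $|R|=|C|$ for any complete forcing sequence from a loop zero forcing set) that the paper treats as immediate.
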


As discussed above, \th\ref{thm:triangles-and-enhanced-ZF} now follows directly from \th\ref{cor:triangle number and loop zero forcing relationship}.
In particular, $\hat Z(G)$, being the largest zero forcing number of a looping of $G$, is seen to equal $|G|$ minus the smallest triangle number of an $\mathcal A \in \Sznz(G)$, which gives \th\ref{thm:triangles-and-enhanced-ZF}.

In contrast to $\hat Z(G)$, the ordinary zero forcing number $Z(G)$ is defined in terms of a zero forcing process that makes no reference to loopings of the graph.  Even so, $Z(G)$ turns out to have a natural interpretation in terms of
the presence of
triangles within
the patterns in $\Sznz(G)$, which correspond to these loopings.  In particular, 
we can understand the zero forcing number in terms of the locations of submatrices that remain triangles regardless of what zero-nonzero pattern occurs on the diagonal.

\begin{defn}\th\label{def:persistent triangle}
Let $G$ be a graph and let $R$ and $C$ be subsets of $V(G)$ with $|R|=|C|$. We say that $(R,C)$ is a \defterm{persistent triangle} of $G$ if $\mathcal A[R,C]$ is a triangle for every $\mathcal A \in \Sznz(G)$.  In this case, we call the value of $|R|=|C|$ the \defterm{size} of the triangle.
\end{defn}

Our next theorem shows how persistent triangles of $G$ are related to zero forcing sets of $G$.  Before stating this theorem, we present the following example to illustrate this relationship.

\begin{ex}\thlabel{ex:zero forcing set and persistent triangle}
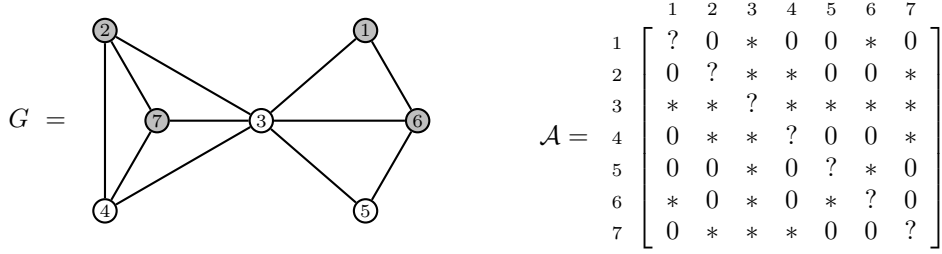
\begin{figure}
    \centering
$G~=$\quad\begin{minipage}{2in}
    \begin{tikzpicture}[thick, main node/.style={circle,minimum width=7pt,draw,inner sep=1pt}, filled node/.style={fill={lightgray},circle,minimum width=7pt,draw,inner sep=1pt},dark node/.style={fill={black},circle,minimum width=7pt,draw,inner sep=1pt}]
\def \sf {1.2}
\draw
    (-1.732*\sf,1*\sf) node[filled node] (a) {\scriptsize $2$}
    (-1.1547*\sf,0*\sf) node[filled node] (b) {\scriptsize $7$}
    (-1.732*\sf,-1*\sf) node[main node] (c) {\scriptsize $4$}
    (0*\sf,0*\sf) node[main node] (d) {\scriptsize $3$}
    (1.1547*\sf,1*\sf) node[filled node] (e) {\scriptsize $1$}
    (1.1547*\sf,-1*\sf) node[main node] (f) {\scriptsize $5$}
    (1.732*\sf,0*\sf) node[filled node] (g) {\scriptsize $6$}
    ;
\path
    (a) edge (b)
    	edge (c)
		edge (d)
    (b) edge (c)
    	edge (d)
    (c) edge (d)
    (d) edge (e)
    (d) edge (f)
        edge (g)
    (e) edge (g)
    (f) edge (g);
\end{tikzpicture}
\end{minipage}
\hspace{0.25in}
\begin{minipage}{2.25in}
       $\mathcal A =~ \kbordermatrix{
        ~ & 1 & 2 & 3 & 4 & 5 & 6 & 7 \\
        1 & ? & 0 & * & 0 & 0 & * & 0  \\
        2 & 0 & ? & * & * & 0 & 0 & *  \\
        3 & * & * & ? & * & * & * & *  \\
        4 & 0 & * & * & ? & 0 & 0 & *  \\
        5 & 0 & 0 & * & 0 & ? & * & 0  \\
        6 & * & 0 & * & 0 & * & ? & 0  \\
        7 & 0 & * & * & * & 0 & 0 & ? 
        }$
\end{minipage}
    \caption{For the graph $G$ above, every $\mathcal A\in\Sznz(G)$ has the form given, where the symbol ``$?$''\ denotes an entry that could be either $0$ or $*$.}
    \label{fig:graph with associated triangles}
\end{figure}

For the graph $G$ shown in Figure \ref{fig:graph with associated triangles}, any $\mathcal A \in \Sznz(G)$ has the form given there.  Now note that $\{1,2,6,7\}$ is a zero forcing set for $G$, from which a possible sequence of forces is $1 \to 3, 2\to 4, 3\to 5$.  In this sequence, the set of vertices that force is $\{1,2,3\}$.  The set of vertices that get forced necessarily has the same size, and is $\{3,4,5\}$.  Hence, letting $R=\{1,2,3\}$ and $C=\{3,4,5\}$, we have that $V(G)\setminus C$ is a zero forcing set for $G$ from which $R$ can be the set of vertices that force.

Now note that the submatrix of $\mathcal A$ with rows indexed by $R$ and columns indexed by $C$ is
\[
\kbordermatrix{
    ~ & 3 & 4 & 5 \\
    1 & * & 0 & 0 \\
    2 & * & * & 0 \\
    3 & ? & * & * 
}.
\]
In particular, this submatrix is a triangle regardless of whether the entry in its bottom-left corner (a diagonal entry from $\mathcal A$) is $*$ or $0$.  In particular, then, $\mathcal A[R,C]$ is a triangle for every $\mathcal A \in \Sznz(G)$.  Per \thref{def:persistent triangle}, this means that $(R,C)$ is a persistent triangle of $G$.
\end{ex}

The next theorem states that the relationship illustrated by \thref{ex:zero forcing set and persistent triangle}, between zero forcing sets of a graph $G$ and persistent triangles of $G$, holds in general.

\begin{thm}\th\label{thm:zero forcing sets and persistent triangles}
Let $G$ be a graph and let $R$ and $C$ be subsets of $V(G)$ with $|R|=|C|$.  Then
$(R,C)$ is a persistent triangle of $G$ if and only if $V(G)\setminus C$ is a zero forcing set for $G$ from which $R$ can be the set of vertices that force.
\end{thm}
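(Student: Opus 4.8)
The plan is to leverage \th\ref{thm:triangle-and-forcing-set-for-one-looping}, which already connects, for a \emph{fixed} looping $\hat G$, the property that $\mathcal A[R,C]$ is a triangle with the property that $V(G)\setminus C$ is a loop zero forcing set for $\hat G$ from which $R$ can be the set of vertices that force. The difficulty in passing to the present statement is that a persistent triangle requires $\mathcal A[R,C]$ to be a triangle for \emph{every} $\mathcal A \in \Sznz(G)$ simultaneously, while the ordinary zero forcing number $Z(G)$ makes no reference to loopings at all. So the main work is to translate the universal quantifier over loopings into a condition on the underlying simple graph, and to reconcile loop zero forcing with ordinary zero forcing.

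The key observation I would isolate first is that the only entries of $\mathcal A$ that vary across $\Sznz(G)$ are the diagonal entries; every off-diagonal entry is determined by $G$. Thus the submatrix $\mathcal A[R,C]$ depends on the choice of looping only through those positions $(i,j)$ with $i=j$, i.e.\ the diagonal entries of $\mathcal A$ that happen to fall inside the block $R\times C$, which are exactly the entries indexed by pairs $(v,v)$ with $v\in R\cap C$. For the pair $(R,C)$ to be a \emph{persistent} triangle, $\mathcal A[R,C]$ must be a triangle no matter how these $R\cap C$ diagonal positions are set to $0$ or $\ast$. I would argue that requiring the triangle property to persist when each such diagonal entry is set to $0$ is the binding constraint: a submatrix that is a triangle when a certain set of entries is forced to $0$ remains a triangle when any of those entries is upgraded to $\ast$, since enlarging the nonzero support of the diagonal of a triangle (after the witnessing permutations) cannot destroy lower-triangularity with $\ast$ diagonal. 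Hence $(R,C)$ is a persistent triangle exactly when $\mathcal A_0[R,C]$ is a triangle, where $\mathcal A_0$ is the single pattern in $\Sznz(G)$ with \emph{all} diagonal entries equal to $0$ — equivalently, the pattern of the \emph{loopless} looping $\hat G_0=G$ itself.

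With that reduction in hand, I would apply \th\ref{thm:triangle-and-forcing-set-for-one-looping} to the specific looping $\hat G_0 = G$ (no loops added). For this looping, the loop zero forcing rule coincides with the ordinary zero forcing rule: a vertex can force only a neighbor, and having no loop means a vertex $v$ is not adjacent to itself, so the distinction between ``loop'' and ``ordinary'' forcing collapses to the usual rule on the simple graph $G$. Therefore the theorem's conclusion, that $V(G)\setminus C$ is a loop zero forcing set for $\hat G_0$ from which $R$ can be the set of vertices that force, is precisely the statement that $V(G)\setminus C$ is an ordinary zero forcing set for $G$ from which $R$ can be the set of vertices that force. Chaining this with the equivalence ``$(R,C)$ is a persistent triangle $\iff$ $\mathcal A_0[R,C]$ is a triangle'' yields the desired biconditional.

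I expect the main obstacle to be the persistence reduction, namely carefully justifying that it suffices to check the all-zero-diagonal pattern $\mathcal A_0$ rather than all $2^{|G|}$ patterns. The forward direction (persistent implies $\mathcal A_0[R,C]$ is a triangle) is immediate since $\mathcal A_0\in\Sznz(G)$, but the converse requires the monotonicity argument: I must verify that the witnessing row and column permutations that triangularize $\mathcal A_0[R,C]$ continue to triangularize $\mathcal A[R,C]$ for every other $\mathcal A$, using that any diagonal entry of $\mathcal A$ lying inside the block occupies a position that, after permutation, sits on or below the diagonal of the triangular form, and that changing a $0$ to a $\ast$ on or below the diagonal preserves the triangle property (the diagonal must stay all $\ast$, which is unaffected, and subdiagonal entries are already unconstrained). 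Getting this positional bookkeeping exactly right — confirming that an in-block diagonal entry never lands strictly above the diagonal of the permuted triangular form — is the one step that needs genuine care rather than routine verification.
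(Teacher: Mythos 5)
Your reduction to the all-zero-diagonal pattern $\mathcal A_0$ is the step that fails, and it fails for a concrete reason: an in-block diagonal position \emph{can} land strictly above the diagonal of the permuted triangular form. Take $G=K_2$ with vertices $1,2$ and $R=C=\{1,2\}$. The loopless pattern $\mathcal A_0=\begin{pmatrix} 0 & * \\ * & 0\end{pmatrix}$ is a triangle (swap the two columns), but the pattern in $\Sznz(G)$ with both diagonal entries equal to $*$ is the all-$*$ $2\times 2$ pattern, which is not a triangle. So $\mathcal A_0[R,C]$ being a triangle does not make $(R,C)$ a persistent triangle; the monotonicity claim is false because, in the witnessing permutation for $\mathcal A_0$ (rows in order $1,2$, columns in order $2,1$), the diagonal entry $(1,1)$ of $\mathcal A$ sits strictly above the diagonal of the triangular form, so upgrading it from $0$ to $*$ destroys lower-triangularity. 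No choice of witness avoids this, since persistence genuinely fails here. (The theorem itself is consistent with this: $V(G)\setminus C=\emptyset$ is not an ordinary zero forcing set for $K_2$.)

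Your second step contains the same error viewed from the forcing side: the loop zero forcing rule on the loopless looping $\hat G_0$ does \emph{not} coincide with ordinary zero forcing on $G$. The loop rule drops the requirement that the forcing vertex be filled, so on loopless $K_2$ the empty set is a loop zero forcing set ($1\to 2$ with $1$ unfilled, then $2\to 1$), while it is certainly not an ordinary zero forcing set. In triangle language, a vertex of $R\cap C$ may force before it gets forced under the loop rule, and that is exactly the situation in which its diagonal entry lands above the diagonal of the triangular form --- and exactly what ordinary zero forcing forbids. This is why the paper does not collapse the quantifier over loopings to a single looping: it instead proves \thref{lem:ordinary-ZF-is-equivalent-to-ZF-for-every-looping}, asserting that being an ordinary zero forcing set (with $R$ the set of forcing vertices) is equivalent to being a loop zero forcing set for \emph{every} looping, where the nontrivial direction constructs an adversarial looping that places loops on precisely the unfilled vertices of $R$ having a unique unfilled neighbor, thereby blocking all such ``unfilled forces.'' The universal quantification over all of $\Sznz(G)$ in \thref{def:persistent triangle} is doing real work, and your argument discards it.
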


We prove \thref{thm:zero forcing sets and persistent triangles} by first establishing the following lemma.  It says that being a zero forcing set for a graph is equivalent to being a zero forcing set for every looping of the graph, and that this equivalence preserves the set of vertices that do the forcing.

\begin{lem}\th\label{lem:ordinary-ZF-is-equivalent-to-ZF-for-every-looping}
Let $G$ be a graph with $F,R\subseteq V(G)$.  The following are equivalent.
\begin{enumerate}
    \item\label{cond:F set R forces ordinary forcing}
    The set $F$ is a zero forcing set for $G$ from which $R$ can be the set of vertices that force.
    \item \label{cond:F set R forces every looping}
    For every looping $\hat G$ of $G$, the set $F$ is a zero forcing set from which $R$ can be the set of vertices that force.
\end{enumerate}
\end{lem}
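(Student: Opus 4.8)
The plan is to prove the two implications separately, with (1)$\Rightarrow$(2) being routine and (2)$\Rightarrow$(1) carrying the substance. For (1)$\Rightarrow$(2), I would take a complete forcing sequence from $F$ in $G$ whose set of forcing vertices is exactly $R$, and observe that the very same sequence is a legal loop forcing sequence in \emph{every} looping $\hat G$. The reason is that, at the moment any vertex $v$ performs its force, $v$ is already filled; so even if $\hat G$ places a loop at $v$, that loop contributes no \emph{unfilled} neighbor of $v$, and the unique-unfilled-neighbor condition is unchanged. The identical sequence, with the identical forcing set $R$, therefore certifies (2) for each $\hat G$.

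For (2)$\Rightarrow$(1), I would induct on $|C|$, where $C = V(G)\setminus F$, maintaining (2) for the current pair $(F,R)$ as the inductive hypothesis; the base case $C=\emptyset$ is the empty sequence. For the inductive step the goal is to manufacture a legal \emph{standard} first force $w\to u$ with $w\in F\cap R$. To locate one, I would apply (2) to the tailored looping $\hat G^{\ast}$ that places a loop on every vertex \emph{except} those unfilled vertices all of whose neighbors already lie in $F$. In $\hat G^{\ast}$, no unfilled vertex can begin the forcing: an unlooped one has all its neighbors filled and hence no unfilled neighbor at all, while a looped one has at least one unfilled neighbor besides itself, giving it two. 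Since $C\neq\emptyset$, the sequence guaranteed by (2) for $\hat G^{\ast}$ performs at least one force, and by the preceding dichotomy its first force is made by a filled vertex $w$; thus it is a standard force $w\to u$ with $u$ the unique unfilled neighbor of $w$ in $G$, and $w\in R$ because $R$ is the forcing set of that sequence.

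It then remains to show that (2) \emph{descends} to the reduced pair $(F\cup\{u\},\,R\setminus\{w\})$; granting this, the induction yields a complete ordinary forcing sequence from $F\cup\{u\}$ with forcing set $R\setminus\{w\}$, and prepending $w\to u$ completes the step with forcing set exactly $R$. To prove the descent, I would fix an arbitrary looping $\hat G$ together with a complete loop forcing sequence from $F$ with forcing set $R$; since $w\in R$, the vertex $w$ forces in it. The crucial observation is that $w$ is filled from the outset while $u$ is its unique unfilled neighbor, so, as the filled set only grows, $u$ stays the sole vertex $w$ could ever force---hence $w$ must force $u$. Because $w\to u$ is already legal from the initial filled set, I would move it to the front (forcing is monotone: enlarging the filled set preserves the legality of any force whose target is still unfilled, so the displaced forces stay legal) and then delete it; the remaining tail is a complete loop forcing sequence from $F\cup\{u\}$ whose forcing set is precisely $R\setminus\{w\}$.

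The main obstacle is exactly this descent, namely transporting (2) to a smaller instance while controlling the forcing set \emph{exactly}. The two devices that overcome it are the specially chosen looping $\hat G^{\ast}$, which forces the first move to be a standard force by a filled vertex of $R$, and the rigidity observation that a filled vertex with a unique unfilled neighbor has no freedom in what it forces; together these let the reorder-to-front-and-delete argument pin down the reduced forcing set as $R\setminus\{w\}$ rather than merely some subset of $R$.
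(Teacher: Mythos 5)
Your proof is correct, and for the substantive direction (2)$\Rightarrow$(1) it takes a genuinely different route from the paper. The paper argues the contrapositive: assuming (1) fails, it lets only vertices of $R$ force in $G$ until the process stalls at a set $F'$ with $F\subseteq F'\subsetneq V(G)$, and then constructs a \emph{single} adversarial looping---placing a loop on exactly those unfilled vertices of $R$ having exactly one unfilled neighbor with respect to $F'$---in which no vertex of $R$ can ever force, so that (2) fails. Your proof is direct: an induction on $|V(G)\setminus F|$ in which the tailored looping $\hat G^{\ast}$ serves the same combinatorial purpose as the paper's looping (in both, the point is that a loop on an unfilled vertex hands it itself as a second unfilled neighbor and thereby blocks it from forcing), but you use it only to extract one legal standard force $w\to u$ with $w\in F\cap R$, and your reorder-and-delete descent then transports (2) to the pair $(F\cup\{u\},R\setminus\{w\})$. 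The paper's argument is shorter and needs only one looping; yours is longer but does stricter bookkeeping. In particular, your descent preserves the forcing set being \emph{exactly} $R$ at every stage, whereas the paper's assertion that $F'\subsetneq V(G)$ implicitly reads ``$R$ can be the set of vertices that force'' as ``only vertices of $R$ force'': if the $R$-only process happens to fill all of $V(G)$ while using a proper subset of $R$, condition (1) as literally defined still fails even though $F'=V(G)$. Your induction handles that edge case automatically, which is a modest but real advantage of the more elaborate approach.
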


\begin{proof}
Suppose first that \ref{cond:F set R forces ordinary forcing} holds.  Let $\hat G$ be any looping of $G$.  Under loop zero forcing, the presence or absence of a loop on a vertex has no effect on its ability to force when it is filled.  As a result, any vertex that could force under the ordinary zero forcing rule could also do so under the loop zero forcing rule, regardless of the looping. Hence, $F$ is a loop zero forcing set for $\hat G$ from which the exact same vertices, namely those in $R$, can be taken to force, so that \ref{cond:F set R forces every looping} holds.

Now assume that \ref{cond:F set R forces ordinary forcing} does not hold.  
Then, starting with $F$ as the set of filled vertices and letting, at each step, some vertex of $R$ force until no further such forces are possible, we obtain a set $F'$ with $F \subseteq F' \subset V(G)$
such that from $F'$ no vertex in $R$ can force.
We use $F'$ to construct a looping $\hat G$ that contradicts \ref{cond:F set R forces every looping}.

In $\hat G$, each vertex with a loop will be in $R$, and, in particular, $x\in R$ receives a loop if and only if $x$ is unfilled and has exactly one unfilled neighbor in $G$ (when $F'$ is the set of filled vertices).  This leaves each such $x$ with two unfilled neighbors in $\hat G$, one of which is its own loop.
Hence, in $\hat G$ every unfilled vertex in $R$ has either zero or at least two unfilled neighbors.  So no unfilled vertex in $R$ can force in $\hat G$.
At the same time, no filled vertex of $R$ can force in $\hat G$ either, since no ordinary force was possible from this vertex in $G$ (again, with $F'$ as the set of filled vertices).

By the above, it is not possible for $F'$ to force all of $\hat G$ with only vertices in $R$ forcing.  Since $F \subseteq F'$, this must be true of $F$ as well.  So \ref{cond:F set R forces every looping} does not hold.
\end{proof}

\begin{proof}[Proof of \thref{thm:zero forcing sets and persistent triangles}]
By \th\ref{def:persistent triangle}, the condition that  $(R,C)$ is a persistent triangle of $G$ means that $\mathcal A[R,C]$ is a triangle for every $\mathcal A \in \Sznz(G)$. By \th\ref{thm:triangle-and-forcing-set-for-one-looping}, this is equivalent to saying that for every looping $\hat G$, the set $V(G)\setminus C$ is a zero forcing set for $\hat G$ from which $R$ can be the set of vertices that force.
By \th\ref{lem:ordinary-ZF-is-equivalent-to-ZF-for-every-looping}, this is in turn equivalent to the statement that $V(G)\setminus C$ is a zero forcing set for $G$ from which $R$ can be the set of vertices that force.
\end{proof}

The persistent triangles of $G$ are defined in terms of triangles in the patterns in $\Sznz(G)$.  The following corollary shows that $Z(G)$ can be understood entirely in these terms.

\begin{cor}\th\label{cor:zero forcing and persistent triangles}
Let $G$ be a graph.  Then $|G|-Z(G)$ is the size of a largest persistent triangle of $G$.
\end{cor}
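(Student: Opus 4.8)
The plan is to derive this corollary directly from \thref{thm:zero forcing sets and persistent triangles}, which already characterizes the persistent triangles $(R,C)$ of $G$ as exactly those pairs for which $V(G)\setminus C$ is a zero forcing set from which $R$ can be the set of vertices that force. Given this characterization, essentially all that remains is to track the sizes involved and then optimize, converting ``largest persistent triangle'' into ``smallest zero forcing set.''

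First I would record the one numerical fact that makes the sizes line up. Suppose $F$ is a zero forcing set for $G$ and fix a complete forcing sequence from $F$; since $F$ is a zero forcing set, this sequence fills all of $V(G)$. The forced vertices are then exactly those in $V(G)\setminus F$, and each is forced exactly once, so there are $|V(G)|-|F|$ forces in the sequence. Because each vertex forces at most once, the set $R$ of vertices that force also has cardinality $|V(G)|-|F|$. Thus, writing $C=V(G)\setminus F$, we have $|R|=|C|=|G|-|F|$, and since $F=V(G)\setminus C$ is a zero forcing set from which $R$ is the set of vertices that force, \thref{thm:zero forcing sets and persistent triangles} guarantees that $(R,C)$ is a persistent triangle, of size $|G|-|F|$.

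With this in hand, both inequalities are immediate. For the upper bound, let $(R,C)$ be any persistent triangle. By \thref{thm:zero forcing sets and persistent triangles}, $V(G)\setminus C$ is a zero forcing set, so $|V(G)\setminus C|\ge Z(G)$, and hence the size $|C|=|G|-|V(G)\setminus C|$ of the triangle is at most $|G|-Z(G)$. For the matching lower bound, take an optimal zero forcing set $F$, so that $|F|=Z(G)$, and apply the observation of the previous paragraph to produce a persistent triangle of size $|G|-|F|=|G|-Z(G)$.

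I expect the only point requiring care to be the bookkeeping in the first step, namely verifying $|R|=|C|=|G|-|F|$. The definition of a persistent triangle demands $|R|=|C|$, so before invoking \thref{thm:zero forcing sets and persistent triangles} in the lower-bound direction I must confirm that the set of forcing vertices produced by a complete forcing sequence really does have the same cardinality as $C=V(G)\setminus F$; this rests on the facts that the forcing chains partition the filled vertices and that each vertex forces at most once. Beyond this, the argument is purely a matter of turning ``maximize $|C|$'' into ``minimize $|F|$'' and recognizing the latter minimum as $Z(G)$.
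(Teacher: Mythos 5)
Your proof is correct and follows essentially the same route as the paper's: both directions come from applying \thref{thm:zero forcing sets and persistent triangles} to convert between persistent triangles and zero forcing sets, and then optimizing over each. The only difference is that you explicitly verify the cardinality bookkeeping $|R|=|C|=|G|-|F|$ (which the paper leaves implicit), a worthwhile bit of extra care but not a change of approach.
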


\begin{proof}
Let $k$ be the size of a largest persistent triangle of $G$.
Let $F$ be a zero forcing set with $|F|=Z(G)$.
Let $C=V(G)\setminus F$. \th\ref{thm:zero forcing sets and persistent triangles} says that $G$ has a persistent triangle of size $|C|=|G|-Z(G)$.  So  $k \ge |G|-Z(G)$.

For the reverse inequality, let $(R,C)$ be a persistent triangle of $G$ with $|R|=|C|=k$.  Then \th\ref{thm:zero forcing sets and persistent triangles} gives that $V(G)\setminus C$ is a zero forcing set for $G$ of size $|G|-k$.  So, $|G|-k \ge Z(G)$.  Thus, $k \le |G|-Z(G)$.
\end{proof}

As \th\ref{cor:zero forcing and persistent triangles} shows, $Z(G)$ precisely captures the best bound on $M(G)$ that can be witnessed by some fixed indices locating a submatrix that is a triangle within every pattern in $\Sznz(G)$.  On the other hand, $\hat Z(G)$ captures the possibility that the largest triangle within each pattern from $\Sznz(G)$ may be in a different location depending upon the pattern of the diagonal.  In particular, when a gap occurs such that $\hat Z(G) < Z(G)$, this is precisely because every pattern in $\Sznz(G)$ has a triangle that is larger than the largest persistent triangle (so that necessarily these triangles do not all occur in the same location).

Beyond requiring that a fixed location give a submatrix that is a triangle regardless of the zero-nonzero pattern of the diagonal entries, we may require that the location excludes diagonal entries altogether.  This stronger notion of a triangle associated with $G$ can be formalized as follows. 

\begin{defn}\th\label{def:immutable triangle}
Let $G$ be a graph and let $R$ and $C$ be subsets of $V(G)$ with $|R|=|C|$.
When $(R,C)$ is a persistent triangle of $G$ and $R\cap C = \emptyset$, we say that $(R,C)$ is an \defterm{immutable triangle} of $G$.  In particular, this occurs when, for every $\mathcal A \in \Sznz(G)$, the submatrix $\mathcal A[R,C]$ is a triangle  with no entries coming from the diagonal of $\mathcal A$.  Again in this case, we refer to the value of $|R|=|C|$ as the \defterm{size} of the immutable triangle.
\end{defn}

In the same way that persistent triangles associated with $G$ correspond to zero forcing sets, those
that are
immutable triangles correspond to zero forcing sets of a special type.

\begin{defn}\th\label{def:direct ZF set}
Let $G$ be a graph.  We say that
a zero forcing set $F$ for $G$ is a \defterm{direct zero forcing set} if there is a complete forcing sequence from $F$ such that all vertices that force are in $F$.
\end{defn}

The following observation follows easily from \th\ref{def:direct ZF set}.

\begin{obs}\th\label{obs:directly forceable chain length}
Let $F$ be a zero forcing set for a graph.  Then $F$ is a direct zero forcing set if and only if there is a complete forcing sequence from $F$ with each of the resulting forcing chains of length $0$ or $1$.
\end{obs}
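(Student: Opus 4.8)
The plan is to prove the two directions separately, using the correspondence between the vertices that force and the structure of the forcing chains. Recall that every forcing sequence partitions the filled vertices into forcing chains $(v_0, v_1, \ldots, v_k)$, where $v_0$ is initially filled and each $v_{i-1} \to v_i$ is a force occurring in the sequence. The key bookkeeping fact I would rely on is that a vertex forces exactly once, so in a chain $(v_0, \ldots, v_k)$ the vertices that do the forcing are precisely $v_0, \ldots, v_{k-1}$ (each forcing its successor), while $v_k$ does not force within this chain. Thus the set of forcing vertices in the whole sequence is exactly the union, over all chains, of every vertex except the last one in each chain.

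For the forward direction, suppose $F$ is a direct zero forcing set, so there is a complete forcing sequence from $F$ in which every forcing vertex lies in $F$. I would argue that no chain can have length $2$ or more. If some chain were $(v_0, v_1, \ldots, v_k)$ with $k \ge 2$, then $v_1$ would be a forcing vertex (it forces $v_2$), yet $v_1$ is forced by $v_0$ and hence is not initially filled, so $v_1 \notin F$. This contradicts the assumption that all forcing vertices lie in $F$. Therefore every chain has length at most $1$, giving the desired conclusion.

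For the reverse direction, suppose there is a complete forcing sequence from $F$ in which every chain has length $0$ or $1$. I would show that every forcing vertex lies in $F$. By the bookkeeping fact above, each forcing vertex is the non-last vertex of some chain; in a chain of length $0$ there are no forcing vertices, and in a chain of length $1$, namely $(v_0, v_1)$, the only forcing vertex is $v_0$, which is initially filled and so lies in $F$. Hence all forcing vertices lie in $F$, and $F$ is a direct zero forcing set.

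I do not anticipate a genuine obstacle here; the statement is essentially a reformulation of \thref{def:direct ZF set} in terms of chain lengths, and the content is entirely contained in the observation that a vertex forces at most once, so that a forcing vertex appearing as a non-initial vertex of its chain is precisely what produces a chain of length at least $2$. The only point requiring mild care is to state clearly that the forcing vertices of a chain are all its vertices except the last, so that ``every forcing vertex is initially filled'' translates exactly into ``no chain forces past its first step,'' i.e.\ no chain has length exceeding $1$.
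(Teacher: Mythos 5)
Your proof is correct and takes exactly the approach the paper intends: the paper states this observation without proof, noting only that it follows easily from \thref{def:direct ZF set}, and your argument---that a chain of length at least $2$ would make a forced (hence non-initial) vertex force, while chains of length $0$ or $1$ ensure every forcing vertex is initially filled---is precisely that routine verification. The one point needing care, that the forcing vertices of a chain are all but its last vertex, you identify and justify correctly.
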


\begin{defn}
Let $G$ be a graph.  The \defterm{direct zero forcing number} of $G$, denoted by $Z_d(G)$, is the smallest size of a direct zero forcing set for $G$.
\end{defn}

\begin{defn}
We say that a graph $G$ is \defterm{directly forceable} if $Z_d(G)=Z(G)$.
\end{defn}

The \emph{propagation time} for a graph $G$ is defined in
\cite[Definition 1.3]{prop_time_paper}.    It follows from
\th\ref{obs:directly forceable chain length} that the class of directly
forceable graphs includes all graphs with propagation time
$1$, a class
investigated in \cite{prop_time_paper}.  The following shows that the directly forceable graphs give a strict generalization of this class.

\begin{figure}
    \centering
$G_1~=$\quad\begin{minipage}{1.5in}
    \begin{tikzpicture}[thick, main node/.style={circle,minimum width=7pt,draw,inner sep=1pt}, filled node/.style={fill={lightgray},circle,minimum width=7pt,draw,inner sep=1pt}]
    \def \sf {1.2}
    \draw
        (0*\sf,3*\sf) node[filled node] (1) {\scriptsize 1}
        (0*\sf,2*\sf) node[filled node] (2) {\scriptsize 2}
        (0*\sf,1*\sf) node[filled node] (3) {\scriptsize 3}
        (2*\sf,3*\sf) node[main node] (4) {\scriptsize 4}
        (2*\sf,2*\sf) node[main node] (5) {\scriptsize 5}
        (2*\sf,1*\sf) node[main node] (6) {\scriptsize 6};
    \path
        (2) edge (1)
            edge (6)
            edge (5)
        (1) edge (6)
            edge (5)
        (3) edge (6)
        (5) edge (6)
            edge (4)
        (4) edge (1);
    \end{tikzpicture}
\end{minipage}
\hspace{0.125in}
$G_2~=$\quad\begin{minipage}{1.5in}
    \begin{tikzpicture}[thick, main node/.style={circle,minimum width=7pt,draw,inner sep=1pt}, filled node/.style={fill={lightgray},circle,minimum width=7pt,draw,inner sep=1pt}]
    \def \sf {1.2}
    \draw
        (0*\sf,3*\sf) node[filled node] (1) {\scriptsize 1}
        (0*\sf,2*\sf) node[filled node] (2) {\scriptsize 2}
        (0*\sf,1*\sf) node[filled node] (3) {\scriptsize 3}
        (2*\sf,3*\sf) node[main node] (4) {\scriptsize 4}
        (2*\sf,2*\sf) node[main node] (5) {\scriptsize 5}
        (2*\sf,1*\sf) node[main node] (6) {\scriptsize 6};
    \path
        (2) edge (1)
            edge (6)
            edge (5)
            edge (3)
        (1) edge (6)
            edge (5)
            edge[bend right] (3)
        (3) edge (6)
        (5) edge (6)
            edge (4)
        (4) edge (1)
            edge[bend left] (6);
    \end{tikzpicture}
\end{minipage}
\hspace{0.25in}
\begin{minipage}{1.5in}
       $\Y = \kbordermatrix{
        ~ & 4 & 5 & 6 \\
        1 & * & * & * \\
        2 & 0 & * & * \\
        3 & 0 & 0 & * \\
        }$
\end{minipage}
    \caption{The set \(\{1,2,3\}\) is a direct zero forcing set for $G_1$ and for $G_2$.  Each graph is
    partitioned according to the pattern $\Y$ (see \thref{def:graph partitioned according to pattern}).}
    \label{fig:graph partitioned with pattern}
\end{figure}
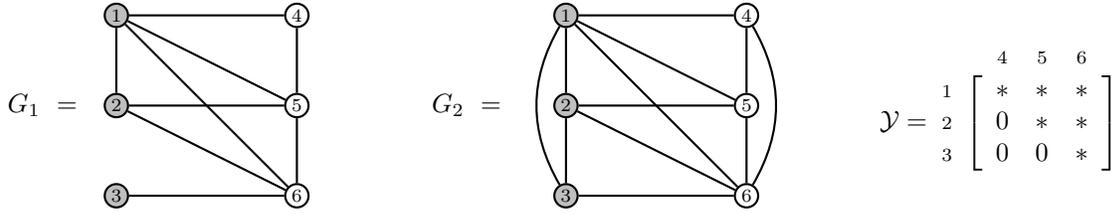

\begin{ex}
Let \( G \) be either of the graphs \( G_1 \) and \( G_2 \) shown in Figure \ref{fig:graph partitioned with pattern}. Then \( F = \{ 1,2,3 \} \) is a zero forcing set for \( G \) from which
\( 3 \to 6, 2 \to 5, 1 \to 4 \) is a complete forcing sequence.  In that sequence, only vertices in $F$ force, showing that $F$ is a direct zero forcing set for $G$.
As there are no zero
forcing sets of smaller size, \( F \) is an optimal zero forcing
set, and hence \( G \) is directly forceable. Note, however, that by \cite[Proposition 3.12]{prop_time_paper}, the propagation time of \( G
\) is not \( 1 \).
\end{ex}

\begin{thm}\th\label{thm:immutable triangles and direct forcing sets}
Let $G$ be a graph and let $R$ and $C$ be subsets of $V(G)$ with $|R|=|C|$.  Then
$(R,C)$ is an immutable triangle of $G$ if and only if $V(G)\setminus C$ is a direct zero forcing set for $G$ from which $R$ can be the set of vertices that force.
\end{thm}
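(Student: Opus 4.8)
The plan is to deduce this result directly from \thref{thm:zero forcing sets and persistent triangles} by isolating the single extra requirement that separates an immutable triangle from a general persistent triangle, namely the condition $R\cap C=\emptyset$ from \thref{def:immutable triangle}, and then showing that this condition corresponds exactly to the directness of the associated forcing process.

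First I would record a bookkeeping fact about any complete forcing sequence from $F := V(G)\setminus C$ whose set of forcing vertices is exactly $R$. Because $F$ is a zero forcing set, every vertex not initially filled is eventually forced, so the set of vertices that get forced is precisely $V(G)\setminus F = C$. Consequently a vertex lies in $R\cap C$ exactly when it both forces and is itself forced, and so $R\cap C=\emptyset$ holds if and only if no forcing vertex lies in $C$, i.e.\ if and only if $R\subseteq V(G)\setminus C = F$. By \thref{def:direct ZF set}, the latter is exactly the statement that this forcing sequence witnesses $F$ as a direct zero forcing set. (Equivalently, via \thref{obs:directly forceable chain length}, $R\cap C=\emptyset$ says that every forcing chain has length $0$ or $1$, since the vertices lying in $R\cap C$ are precisely the internal vertices of the chains.)

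With this in hand I would verify both directions. For the forward direction, assume $(R,C)$ is an immutable triangle; then it is in particular a persistent triangle, so \thref{thm:zero forcing sets and persistent triangles} furnishes a complete forcing sequence from $F=V(G)\setminus C$ with forcing set exactly $R$, and since $R\cap C=\emptyset$ the bookkeeping fact shows this very sequence is direct, so $F$ is a direct zero forcing set from which $R$ can be the set of vertices that force. For the reverse direction, a complete forcing sequence from $F=V(G)\setminus C$ that is direct and has forcing set $R$ shows, on the one hand (by \thref{thm:zero forcing sets and persistent triangles}), that $(R,C)$ is a persistent triangle, and on the other hand (by the bookkeeping fact) that $R\cap C=\emptyset$; together these give that $(R,C)$ is an immutable triangle.

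I expect no serious obstacle, as the statement is essentially \thref{thm:zero forcing sets and persistent triangles} refined by a single observation. The one point needing care is the matching of quantifiers: the phrase ``direct zero forcing set from which $R$ can be the set of vertices that force'' must be read as asserting one complete forcing sequence that is simultaneously direct and has forcing set $R$, rather than two separate sequences. The bookkeeping fact renders this harmless, since for a forcing sequence with forcing set $R$ the property of being direct depends only on whether $R\subseteq F$, a condition that makes no reference to the particular sequence.
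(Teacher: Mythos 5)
Your proof is correct and takes essentially the same route as the paper's: both directions are deduced from \thref{thm:zero forcing sets and persistent triangles} together with the observation that, for a complete forcing sequence whose forcing set is exactly $R$ (and hence whose forced set is exactly $C = V(G)\setminus F$), directness is equivalent to $R \subseteq F$, i.e.\ to $R\cap C=\emptyset$. Your explicit remark about the quantifier reading---that a single sequence must witness both directness and the forcing set $R$---is left implicit in the paper's proof, but it refines rather than changes the argument.
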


\begin{proof}
Let $F=V(G)\setminus C$.
First assume $(R,C)$ is an immutable triangle of $G$.  
By definition, $(R,C)$ is a persistent triangle of $G$ as well.
It follows by \th\ref{thm:zero forcing sets and persistent triangles} that
$F$ is a zero forcing set for $G$ from which $R$ can be the set of vertices that force.  Since $(R,C)$ is an immutable triangle of $G$, we have $R \cap C = \emptyset$, so that
$R \subseteq V(G)\setminus C = F$.  That is, only vertices in $F$ force, and therefore $F$ is a direct zero forcing set.

Now suppose $F$ is a direct zero forcing set for $G$ from which $R$ can be the set of vertices that force.  \th\ref{thm:zero forcing sets and persistent triangles} gives that $(R,C)$ is a persistent triangle of $G$.  Since $F$ is direct zero forcing set, only vertices in $F$ force.  That is, $R \subseteq F = V(G)\setminus C$, and hence $R \cap C = \emptyset$, so that $(R,C)$ is an immutable triangle of $G$.
\end{proof}

The following corollary now follows from \th\ref{thm:immutable triangles and direct forcing sets} exactly as \th\ref{cor:zero forcing and persistent triangles} follows from \th\ref{thm:zero forcing sets and persistent triangles}.

\begin{cor}
Let $G$ be a graph.  Then $|G|-Z_d(G)$ is the size of a largest immutable triangle of $G$.
\end{cor}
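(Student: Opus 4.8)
The plan is to prove the asserted equality by a two-sided inequality argument that parallels the proof of \thref{cor:zero forcing and persistent triangles}, now invoking \thref{thm:immutable triangles and direct forcing sets} in place of \thref{thm:zero forcing sets and persistent triangles}. Write $k$ for the size of a largest immutable triangle of $G$; the goal is to show $k = |G| - Z_d(G)$.

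For the inequality $k \ge |G| - Z_d(G)$, I would begin with a direct zero forcing set $F$ of minimum size, so that $|F| = Z_d(G)$, and fix a complete forcing sequence from $F$ in which every forcing vertex lies in $F$. Let $R$ be the set of vertices that force and set $C = V(G)\setminus F$. Since $F$ is a zero forcing set, this complete forcing sequence fills all of $V(G)$, so the forced vertices are exactly those in $C$; because each vertex forces at most once and is forced at most once, the sequence induces a bijection between $R$ and $C$, giving $|R| = |C| = |G| - Z_d(G)$. Now \thref{thm:immutable triangles and direct forcing sets} applies to this $R$ and $C$ to show that $(R,C)$ is an immutable triangle of $G$, whose size is $|G| - Z_d(G)$; hence $k \ge |G| - Z_d(G)$.

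For the reverse inequality, I would take a largest immutable triangle $(R,C)$, so that $|R| = |C| = k$. Applying \thref{thm:immutable triangles and direct forcing sets} in the opposite direction shows that $V(G)\setminus C$ is a direct zero forcing set for $G$, and its size is $|G| - |C| = |G| - k$. Therefore $Z_d(G) \le |G| - k$, that is, $k \le |G| - Z_d(G)$. Combining the two inequalities yields the claimed equality.

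I expect no substantial obstacle, since the heavy lifting is already done by \thref{thm:immutable triangles and direct forcing sets}; the only point requiring care is the cardinality bookkeeping in the first direction, namely the verification that the set $R$ of forcing vertices has the same size as $C = V(G)\setminus F$. This rests on the general fact, already used implicitly when identifying $R$ as a set of forcing vertices of size $|C|$, that a complete forcing sequence from a zero forcing set pairs each forced vertex with the unique vertex that forces it, so that $R$ and $C$ are in bijection.
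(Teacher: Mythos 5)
Your proof is correct and takes essentially the same approach as the paper: the paper proves this corollary by noting it follows from \thref{thm:immutable triangles and direct forcing sets} exactly as \thref{cor:zero forcing and persistent triangles} follows from \thref{thm:zero forcing sets and persistent triangles}, i.e., by the same two-sided inequality argument you give. Your explicit verification that $|R|=|C|$ via the bijection between forcing and forced vertices is a detail the paper leaves implicit, but it is correct and does not change the argument.
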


When a zero forcing set $F$ for a graph $G$ forces via a certain set of forcing chains, the set of
vertices that occur as the first vertex in some chain is just $F$.  Hence, $F$ represents a way of choosing one endpoint from each chain.  If we exchange each vertex of $F$ with the opposite endpoint of its chain, the resulting set $F'$ is called a \defterm{reversal} of $F$ \cite[Definition 2.5]{ZF_parameters_and_min_rank}; we say that $F'$ is the reversal of $F$ with respect to that particular sequence of forces.
Note that there may be vertices that are in both $F$ and $F'$, namely any vertices in chains of length $0$.
It is not difficult to see that $F'$ is also a zero forcing set for $G$
\cite[Theorem 2.6]{ZF_parameters_and_min_rank}
with $|F'|=|F|$.

When $F$ is a direct zero forcing set, and $F'$ is the reversal of $F$ with respect to some sequence of forces as given by \th\ref{obs:directly forceable chain length}, it follows from that observation that $F'$ is also a direct zero forcing set.  Moreover, if we let $R$ be
the set of vertices occurring as the first vertices of the (original) forcing chains of length $1$, then we first observe that $R$ is precisely the set of vertices that at some point force during the sequence of forces from $F$ that we originally considered.  We next observe that $F'=V(G)\setminus R$.
The above observations suffice to prove the following lemma, which we will find useful in Section \ref{sec:cobipartite}.

\begin{lem}\th\label{lem:reversal of direct ZF set}
Let $F$ be a direct zero forcing set for a graph $G$.  In some complete forcing sequence from $F$ with each of the resulting forcing chains of length $0$ or $1$, let $R$ be the set of vertices that force.  Then  $V(G) \setminus R$ gives a reversal of $F$ that is also a direct zero forcing set for $G$ of size $|F|$.
\end{lem}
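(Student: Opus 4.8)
The plan is to identify the set $V(G)\setminus W$ as precisely the reversal $F'$ of $F$ taken with respect to the given forcing sequence, and then to read off the remaining claims from the facts about reversals recorded in the paragraph preceding the lemma. First I would fix a complete forcing sequence from $F$ in which every forcing chain has length $0$ or $1$; such a sequence exists by \th\ref{obs:directly forceable chain length}. Each chain of length $0$ is then a single initially filled vertex that never forces, while each chain of length $1$ has the form $(u,w)$, where $u\in F$ forces the vertex $w$; since $w$ is forced it was unfilled at that moment and so $w\notin F$.

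Next I would set up the bookkeeping using the fact that the chains partition $V(G)$. Writing $A$ for the set of vertices lying in length-$0$ chains, $U$ for the set of first vertices of length-$1$ chains, and $B$ for the set of their second vertices, I get the disjoint decomposition $V(G)=A\sqcup U\sqcup B$. Because the first vertices of the chains are exactly the initially filled vertices, $F=A\cup U$. The vertices that force are exactly the first vertices of the length-$1$ chains, so $W=U$. Finally, forming the reversal leaves each length-$0$ chain untouched and swaps the two endpoints of each length-$1$ chain, so $F'=A\cup B$.

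The crux is then the one-line set computation $V(G)\setminus W=(A\sqcup U\sqcup B)\setminus U=A\sqcup B=F'$, establishing that $V(G)\setminus W$ is exactly the reversal $F'$. Granting this identity, the two remaining assertions follow from the preceding discussion: a reversal of a zero forcing set is again a zero forcing set of the same size, giving $|F'|=|F|$, and the reversal of a direct zero forcing set along a sequence whose chains have length at most $1$ is again a direct zero forcing set.

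I do not expect a real obstacle, since the conceptual content---that reversals preserve both the zero forcing property and, in this restricted setting, directness---has already been argued just before the lemma. The only step demanding care is the verification that $V(G)\setminus W$ coincides with $F'$, and this is a purely combinatorial consequence of the chains partitioning $V(G)$ together with the description of $F$ and $F'$ in terms of the two endpoints of each chain.
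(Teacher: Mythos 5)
Your proposal is correct and follows essentially the same route as the paper, which likewise treats the lemma as a direct consequence of the preceding discussion: identifying $W$ as the set of first vertices of the length-$1$ chains, observing that the reversal satisfies $F'=V(G)\setminus W$, and invoking the cited facts that reversals preserve the zero forcing property, the size, and (via \th\ref{obs:directly forceable chain length}) directness. Your explicit decomposition $V(G)=A\sqcup U\sqcup B$ merely formalizes the bookkeeping that the paper leaves as an observation.
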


\section{Triangle numbers of patterns and zero forcing numbers of graphs}\label{sec:triangles and forcing of partitioned graphs}

The results of the previous section allow us to start with a graph $G$ and understand different zero forcing bounds for $G$ in terms of the presence of specific types of triangles within the patterns associated to $G$, namely those in $\Sznz(G)$.  In this section, we explore connections in the other direction; that is, starting with an arbitrary zero-nonzero pattern $\Y$, we examine how a graph $G$ may have the property that the triangles within $\Y$ correspond to specific zero forcing sets of $G$.  One construction of such a graph recalls the way in which a bipartite graph is naturally constructed from a $(0,1)$-matrix; constructing a cobipartite graph from $\Y$ in a similar way (see \thref{def:cobipartite associated to pattern}) gives the sort of connection described above.  We will see, however, that this cobipartite graph is only one among a more general class of graphs that share the desired connection with $\Y$.  \thref{cor:min ZF set from left to right} that follows allows us to study these graphs more generally, while the next section, Section \ref{sec:cobipartite}, examines the cobipartite case in more detail.

\begin{defn}\th\label{def:graph partitioned according to pattern}
Let $\Y$ be an $m\times n$ zero-nonzero pattern, let $G$ be a graph on $m+n$ vertices, and let
$V_r=\{r_1,\ldots,r_m\}$
and
$V_c=\{c_1,\ldots,c_n\}$
be disjoint subsets of $V(G)$ with $V(G)=V_r\cup V_c$.
We say that $(V_r,V_c)$ is a \defterm{partition of $G$ according to $\Y$}
if there is an edge from $r_i$ to $c_j$ in $G$ exactly when the $(r_i,c_j)$-entry of $\Y$ is nonzero.
\end{defn}

\begin{ex}\th\label{ex:graph partitioned according to pattern}
Let \( G \) be either of the graphs \( G_1 \) and \( G_2 \) shown in Figure \ref{fig:graph partitioned with pattern}.
Let $V_1=\{1,2,3\}$ and $V_2=\{4,5,6\}$.  Then $(V_1,V_2)$ is a partition of $G$ according to $\Y$.  Intuitively, this means that the pattern $\Y$ can be taken to describe precisely the edges of the graph that have one endpoint in $V_1$ and the other in $V_2$.  The other edges, those with both endpoints in $V_1$ or both endpoints in $V_2$, have no bearing on whether or not $(V_1,V_2)$ is a partition of the graph according to $\Y$.
\end{ex}

When a graph $G$ does have a partition according to $\Y$, there is a relationship between the triangles in $\Y$ and certain zero forcing sets for $G$.

\begin{defn}\thlabel{def:forcing from X to Y}
Let $G$ be a graph, let $V_1$ and $V_2$ be subsets of $V(G)$, and let $F$ be a
zero forcing set for $G$.  We say that $F$ 
\defterm{forces from $V_1$ to $V_2$} if
$V_1 \subseteq F$ and
there exists a complete forcing sequence from $F$ such that every vertex that forces is in $V_1$ and every vertex that gets forced is in $V_2$.
\end{defn}

\begin{obs}\thlabel{forcing set V1 to V2 must be direct}
A zero forcing set $F$ can satisfy \th\ref{def:forcing from X to Y} only when it is a direct zero forcing set, since the definition implies that every vertex that forces is an element of $F$.
\end{obs}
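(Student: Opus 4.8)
The plan is to obtain this directly from the two relevant definitions, since the observation is essentially a matter of tracing the set inclusions that those definitions impose. First I would unpack what it means for $F$ to force from $V_1$ to $V_2$ per \thref{def:forcing from X to Y}: there exists a complete forcing sequence from $F$ in which every vertex that forces lies in $V_1$, and moreover $V_1 \subseteq F$. I would then set this alongside \thref{def:direct ZF set}, which asks for a complete forcing sequence from $F$ in which every vertex that forces lies in $F$.

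The single key step is to combine the two facts supplied by the definition of forcing from $V_1$ to $V_2$. Because every forcing vertex in the given complete forcing sequence lies in $V_1$, and $V_1 \subseteq F$, every vertex that forces in that sequence necessarily lies in $F$. But this is precisely the condition appearing in \thref{def:direct ZF set}. The very forcing sequence that witnesses $F$ forcing from $V_1$ to $V_2$ therefore serves, without modification, as the witness required to certify that $F$ is a direct zero forcing set, and no new sequence needs to be constructed.

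I do not anticipate any genuine obstacle here: the result is an immediate consequence of the definitions, and the statement itself records the crux, namely that the definition of forcing from $V_1$ to $V_2$ forces every forcing vertex to be an element of $F$. The only point requiring care is to invoke the \emph{same} complete forcing sequence in both definitions, so that the chain of inclusions $R \subseteq V_1 \subseteq F$ (where $R$ denotes the set of forcing vertices) is applied to a single fixed sequence rather than to different ones.
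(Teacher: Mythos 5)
Your proposal is correct and matches the paper's own (inline) justification: the same complete forcing sequence witnessing that $F$ forces from $V_1$ to $V_2$ has all its forcing vertices in $V_1 \subseteq F$, which is exactly the condition in \thref{def:direct ZF set}. Your added care about using a single fixed sequence for both definitions is precisely the right point, and nothing further is needed.
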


Note that, for any graph $G$, and any arbitrary subsets $V_1$ and $V_2$ of its vertices, there does exist at least one zero forcing set $F$ for $G$ that forces from $V_1$ to $V_2$, since one can always take $F=V(G)$.  Hence, the minimum to which the following theorem refers is always well-defined.

\begin{thm}\th\label{thm:triangles and sets forcing X to Y}
Let $\Y$ be an $m\times n$ zero-nonzero pattern and let $G$ be a graph with a partition $(V_1,V_2)$ according to $\Y$.
Let $k$ be the smallest size of a zero forcing set $F$ for $G$ that forces from $V_1$ to $V_2$.  Then
\[
    \tri(\Y) = m+n-k.
\]
\end{thm}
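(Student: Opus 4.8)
The plan is to set up a size-reversing correspondence between triangles in $\Y$ and zero forcing sets of $G$ that force from $V_1$ to $V_2$, under which a triangle of size $t$ matches a forcing set of size $m+n-t$. I identify the rows of $\Y$ with the vertices of $V_1=\{r_1,\ldots,r_m\}$ and its columns with the vertices of $V_2=\{c_1,\ldots,c_n\}$, so that a triangle in $\Y$ is given by subsets $R\subseteq V_1$ and $C\subseteq V_2$ with $|R|=|C|$ for which $\Y[R,C]$ is a triangle; because $(V_1,V_2)$ is a partition of $G$ according to $\Y$, the entry of $\Y$ at $(r_i,c_j)$ records exactly whether $r_i$ and $c_j$ are adjacent in $G$. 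Granting the correspondence, a largest triangle has size $\tri(\Y)$ and produces a forcing set of size $m+n-\tri(\Y)$, so $k\le m+n-\tri(\Y)$, while a minimum forcing set has size $k$ and produces a triangle of size $m+n-k$, so $\tri(\Y)\ge m+n-k$; together these give the stated identity.

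For the forward direction I would start from a triangle $\Y[R,C]$ of size $t$ and, after the permutations witnessing it, list the rows as $r_{i_1},\ldots,r_{i_t}$ and the columns as $c_{j_1},\ldots,c_{j_t}$ so that the permuted submatrix is lower-triangular with $\ast$ on its diagonal. Taking $F=V_1\cup(V_2\setminus C)$ makes the vertices of $C$ precisely the initially unfilled ones, and I claim $r_{i_1}\to c_{j_1},\ldots,r_{i_t}\to c_{j_t}$ is a valid ordinary forcing sequence. The verification mirrors that of \thref{thm:triangle-and-forcing-set-for-one-looping}: the diagonal $\ast$ gives $r_{i_\ell}$ adjacent to $c_{j_\ell}$, the zeros above the diagonal give $r_{i_\ell}$ non-adjacent to $c_{j_p}$ for $p>\ell$, and since all of $V_1$ stays filled while the still-unfilled vertices of $V_2$ at step $\ell$ are exactly $c_{j_\ell},\ldots,c_{j_t}$, the vertex $c_{j_\ell}$ is the unique unfilled neighbor of $r_{i_\ell}$. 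This gives a complete forcing sequence in which every forcer is in $V_1$ and every forced vertex is in $V_2$, so $F$ forces from $V_1$ to $V_2$ and $|F|=m+n-t$.

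For the reverse direction I would take a zero forcing set $F$ forcing from $V_1$ to $V_2$, with forcers $r_{i_1},\ldots,r_{i_s}$ listed in the order they force and forced vertices $c_{j_1},\ldots,c_{j_s}$. Since the forced vertices are exactly the initially unfilled vertices and all lie in $V_2$, we get $|F|=m+(n-s)=m+n-s$. Ordering the rows by $r_{i_1},\ldots,r_{i_s}$ and the columns by $c_{j_1},\ldots,c_{j_s}$, each force $r_{i_\ell}\to c_{j_\ell}$ yields a diagonal $\ast$, while the uniqueness of the unfilled neighbor at step $\ell$ together with the fact that $c_{j_p}$ remains unfilled for $p>\ell$ yields a $0$ above the diagonal; hence $\Y[\{r_{i_1},\ldots,r_{i_s}\},\{c_{j_1},\ldots,c_{j_s}\}]$ is a triangle of size $s=m+n-|F|$, completing the correspondence.

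The step I expect to require the most care is confirming that the edges of $G$ invisible to $\Y$, namely those inside $V_1$ and those inside $V_2$, never disturb the argument. The point is exactly that a set forcing from $V_1$ to $V_2$ has $V_1\subseteq F$, so every vertex of $V_1$ is filled from the start and every within-$V_1$ edge joins two filled vertices; and since every forcer lies in $V_1$, the within-$V_2$ edges are never used to perform a force and matter only through the requirement that the final state be completely filled. Consequently the only adjacencies relevant to the forcing process are those between $V_1$ and $V_2$, which are precisely the ones recorded by $\Y$, and this is what lets the triangle-forcing correspondence of \thref{thm:triangle-and-forcing-set-for-one-looping} carry over to the rectangular pattern $\Y$; it is also the reason the relevant forcing sets are direct, as noted in \thref{forcing set V1 to V2 must be direct}.
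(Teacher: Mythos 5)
Your proof is correct, but it takes a more self-contained route than the paper's. The paper deduces both inequalities from its Section 2 machinery: for the direction ``forcing set gives triangle,'' it applies \thref{thm:zero forcing sets and persistent triangles} to get a persistent triangle $(R,C)$ and then observes that $R\subseteq V_1$ and $C\subseteq V_2$ place that triangle entirely within the crossing entries, i.e., within $\Y$; for the direction ``triangle gives forcing set,'' it notes that a triangle within $\Y$ is an immutable triangle of $G$ and invokes \thref{thm:immutable triangles and direct forcing sets}. You instead inline the underlying correspondence, specializing the argument of \thref{thm:triangle-and-forcing-set-for-one-looping} directly to the rectangular pattern: your construction $F=V_1\cup(V_2\setminus C)$ and the step-by-step verification that each $c_{j_\ell}$ is the unique unfilled neighbor of $r_{i_\ell}$ reproves exactly what the persistent/immutable-triangle theorems encapsulate, without ever mentioning loopings, $\Sznz(G)$, or diagonal entries. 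What the paper's route buys is brevity and uniformity—the theorem becomes a two-line instance of a general framework it reuses elsewhere; what your route buys is a standalone proof, plus an explicit explanation (your final paragraph) of why edges inside $V_1$ and inside $V_2$ never interfere, a point the paper handles only implicitly through the containments $R\subseteq V_1$ and $C\subseteq V_2$. One detail worth stating explicitly in your reverse direction: the forcers $r_{i_1},\ldots,r_{i_s}$ are pairwise distinct (once a vertex forces, all of its neighbors are filled, so it can never force again), so the extracted submatrix really has $s$ distinct rows and $s$ distinct columns; the paper inherits this from its forcing-chain formalism, while in your streamlined argument it deserves a sentence.
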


\begin{proof}
Letting $F$ be a zero forcing set that forces from $V_1$ to $V_2$, there is some complete forcing sequence from $F$ such that, with $R$ the set of vertices in the sequence that force and $C$ the set of those that get forced (so that $C=V(G)\setminus F$), we have $R \subseteq V_1$ and $C \subseteq V_2$.  Then $(R,C)$ is a triangle, of size $m+n-|F|$, within every matrix in $\Sznz(G)$, by \thref{thm:zero forcing sets and persistent triangles}. Since $(V_1,V_2)$ is a partition of $G$ according to $\Y$, the fact that $R \subseteq V_1$ and $C \subseteq V_2$ implies that this corresponds to a triangle  within $\Y$.  Choosing $F$ with size $k$ thus gives $\tri(\Y) \ge m+n-k$.

Now suppose that a largest triangle within $\Y$ has rows indexed by $R$ and columns indexed by $C$.
Then $\tri(\Y)=|C|$.
Since $(V_1,V_2)$ is a partition of $G$ according to $\Y$, we have $R \subseteq V_1$ and $C \subseteq V_2$.  Since the triangle is within $\Y$, it is an immutable triangle. 
Hence, letting $F = V(G)\setminus C$, we have by \th\ref{thm:immutable
triangles and direct forcing sets} that $F$ is a zero forcing set for $G$ from which $R$ can be the set of vertices that force.  And, of course, the set of
vertices that get forced is simply $V(G)\setminus F = C$.  Since $R \subseteq
V_1$ and $C \subseteq V_2$, it follows that $F$ forces from $V_1$ to
$V_2$.  Thus, $k \le |F|$.
The above now yields
\[
    \tri(\Y) = |C|=|V(G)\setminus F|=m+n-|F| \le m+n-k. \qedhere
\]
\end{proof}

\thref{thm:zero forcing sets and persistent triangles} showed how the presence of a triangle in a fixed location within every pattern of $\Sznz(G)$ is equivalent to the existence of a corresponding zero forcing set for $G$.  \thref{thm:triangles and sets forcing X to Y} gives a connection between zero forcing sets for a graph and triangles in a  different way, by showing that when the vertex set of $G$ is partitioned such that the pattern $\Y$ gives the edges crossing this partition (\thref{def:graph partitioned according to pattern}), the largest size of a triangle within $\Y$ is equal to the smallest size of a zero forcing set for $G$ that forces from one side of the partition to the other (\thref{def:forcing from X to Y}).  Therefore, when some \emph{optimal} zero forcing set meets this restriction, we can express the value of $Z(G)$ in terms of the size of a largest triangle within $\Y$.  In particular, we have the following immediate corollary of \thref{thm:triangles and sets forcing X to Y}.

\begin{cor}\th\label{cor:min ZF set from left to right}
Let $\Y$ be an $m\times n$ zero-nonzero pattern and let $(V_1,V_2)$ be a partition of a graph $G$ according to $\Y$.
If there exists an optimal
zero forcing set $F$ for $G$ that forces from $V_1$ to $V_2$, then
\begin{equation}\label{eqn:most general Z vs triangle number}
    Z(G) = m+n-\tri(\Y).    
\end{equation}
\end{cor}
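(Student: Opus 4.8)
The plan is to read off the corollary from \thref{thm:triangles and sets forcing X to Y} once the quantity $k$ appearing there is identified with $Z(G)$. Since $(V_1,V_2)$ partitions $V(G)$ with $|V_1|=m$ and $|V_2|=n$, we have $|G|=m+n$. Let $k$ denote the smallest size of a zero forcing set for $G$ that forces from $V_1$ to $V_2$; this minimum is well-defined, as noted just before \thref{thm:triangles and sets forcing X to Y}, and that theorem already supplies the identity $\tri(\Y)=m+n-k$. So it suffices to show $k=Z(G)$, after which the claimed formula follows by a single substitution.

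To obtain $k=Z(G)$, I would establish two inequalities. For $k \ge Z(G)$, I note that any set witnessing the minimum defining $k$ is in particular a zero forcing set for $G$, so its cardinality is at least $Z(G)$; taking the smallest such set gives $k \ge Z(G)$. For the reverse inequality, I would invoke the hypothesis directly: there exists an optimal zero forcing set $F$ for $G$ that forces from $V_1$ to $V_2$. Being optimal, $F$ has size $Z(G)$, and, since it forces from $V_1$ to $V_2$, it is one of the sets over which the minimum defining $k$ is taken. Hence $k \le |F| = Z(G)$. Combining the two inequalities yields $k=Z(G)$.

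Substituting $k=Z(G)$ into $\tri(\Y)=m+n-k$ gives $\tri(\Y)=m+n-Z(G)$, which rearranges to \eqref{eqn:most general Z vs triangle number}. There is no real obstacle here: all of the combinatorial content has already been carried by \thref{thm:triangles and sets forcing X to Y}, and the only point requiring attention is that the existence hypothesis supplies the upper bound $k \le Z(G)$, which together with the trivial lower bound $k \ge Z(G)$ forces equality. This is precisely why the statement is genuinely an immediate corollary: without the hypothesis one would only recover the inequality $Z(G) \le m+n-\tri(\Y)$.
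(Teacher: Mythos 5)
Your proof is correct and is exactly the argument the paper intends when it calls this an ``immediate corollary'' of \thref{thm:triangles and sets forcing X to Y}: the hypothesis supplies $k \le |F| = Z(G)$, the trivial observation that any set forcing from $V_1$ to $V_2$ is a zero forcing set supplies $k \ge Z(G)$, and substituting $k = Z(G)$ into $\tri(\Y) = m+n-k$ gives the result.
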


\begin{ex}
Let the graph $G$, the pattern $\Y$, and \( V_1, V_2 \subseteq V(G) \) be as in \thref{ex:graph partitioned according to pattern}.  Then
it is straightforward to check that $Z(G)=3$, and
$F=\{1,2,3\}$ is an optimal zero forcing set for $G$ that forces from $V_1$ to $V_2$.  Thus, by \thref{cor:min ZF set from left to right}, the triangle number of $\Y$ must satisfy $3+3-\tri(\Y)=Z(G)=3$.  Indeed, it is easy to see that in fact $\tri(\Y)=3$.
\end{ex}

Given a zero-nonzero pattern $\Y$, we may wish to find a graph $G$
satisfying the hypotheses of \th\ref{cor:min ZF set from left to right}, so that
\eqref{eqn:most general Z vs triangle number} will apply.
The requirement that
$G$ has a partition $(V_1,V_2)$ according to $\Y$
determines the edges between $V_1$ and $V_2$, but then edges within each $V_i$ must be chosen to ensure that
some optimal zero forcing set forces from $V_1$ to $V_2$.
In the next section, we show that one way to achieve this is to include all possible edges within each of $V_1$ and $V_2$, thus producing a cobipartite graph.  
In this case, the relationship turns out to be very strong between the pattern $\Y$ and the graph $G$, both in terms of their minimum rank (or maximum nullity) parameters, and in terms of the combinatorial bounds on those parameters.

\section{Combinatorial bounds for cobipartite graphs and patterns}\label{sec:cobipartite}

Recall from \thref{def:cobipartite graph} that a graph is said to be cobipartite when its complement is bipartite or, equivalently, when it is possible to partition its vertex set into two cliques.
In the special case where this gives a partition of the graph according to some specific zero-nonzero pattern, it was shown in \cite{not_necessarily_symmetric} that (over an infinite field, and subject to a mild nondegeneracy condition; see \thref{def:saturated}) the minimum rank of the graph and the minimum rank of the zero-nonzero pattern must be equal.
The main result of this section, \thref{thm:triangle number versus ZF for cobipartite}, shows that in this same situation the combinatorial bounds on those two parameters obey the same relationship.

\begin{defn}\label{def:clique partition}
Let $G$ be a graph.
A \defterm{clique partition} $(V_1,V_2)$ of $G$ is a partition of the vertex set of $G$ into two cliques.
(Note that this exists exactly when $G$ is cobipartite.)    
\end{defn}

\begin{defn}\th\label{def:cobipartite associated to pattern}
Let $\Y$ be an $m\times n$ zero-nonzero pattern and $G$ be a graph on $m+n$ vertices.  We say that $G$ is the \defterm{cobipartite graph associated with $\Y$}
if there exists a clique partition $(V_r,V_c)$ of $G$ that is a partition of $G$ according to $\Y$ (see \thref{def:graph partitioned according to pattern}).
\end{defn}

\begin{ex}\thlabel{ex:cobipartite associated to pattern}
Both of the graphs $G_1$ and $G_2$ shown in Figure \ref{fig:graph partitioned with pattern} are partitioned according to the pattern $\Y$ given there.
The graph $G_1$ is not cobipartite; the set $\{2,3,4\}$ of vertices induces an odd cycle in its complement.
The graph $G_2$ is cobipartite; both $(\{1,2,3\},\{4,5,6\})$ and $(\{2,3\},\{1,4,5,6\})$ are clique partitions of the graph.  The former is a partition of $G_2$ according to $\Y$, and hence $G_2$ is the cobipartite graph associated with $\Y$.
\end{ex}

In comparison to the standard way in which a bipartite graph is associated with a rectangular $(0,1)$-matrix, the cobipartite graph associated with $\Y$ is the complement of the bipartite graph that would be associated with the $(0,1)$-matrix whose zero-nonzero pattern is complement of $\Y$ (i.e., the pattern obtained from $\Y$ by replacing each of its $*$ entries with $0$ and vice versa).

\begin{defn}\thlabel{def:saturated}
Let $G$ be a graph.  We say that $G$ is \emph{saturated} with respect to the clique partition $(V_1,V_2)$ of $G$ if every vertex in $V_1$ is adjacent to some vertex in $V_2$, and vice versa.
\end{defn}

\begin{ex}
As noted in \thref{ex:cobipartite associated to pattern}, the graph $G_2$ of Figure \ref{fig:graph partitioned with pattern} has both $(\{1,2,3\},\{4,5,6\})$ and $(\{2,3\},\{1,4,5,6\})$ as clique partitions.  It is easy to verify that $G_2$ is saturated with respect to the first of these partitions, but with respect to the second it is not, since vertex $4$ has no neighbor in the set $\{2,3\}$.
\end{ex}

\subsection{Triangles and zero forcing sets for cobipartite graphs}

The following lemma shows that when a cobipartite graph is saturated with respect to some clique partition, \thref{cor:min ZF set from left to right} can be applied, giving the powerful connection of \thref{thm:triangle number versus ZF for cobipartite}, which follows.

\begin{lem}\th\label{normalized forcing for cobipartite}
Let $G$ be a cobipartite graph that is saturated with respect to the clique partition $(V_1,V_2)$.  Then there is an optimal zero forcing set $F$ for $G$ such that $F$ forces from $V_1$ to $V_2$.
\end{lem}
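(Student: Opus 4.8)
The plan is to prove the lemma by showing that $Z(G)$ equals the smallest size $k$ of a zero forcing set that forces from $V_1$ to $V_2$ (\thref{def:forcing from X to Y}). Since any such set is in particular a zero forcing set, $Z(G)\le k$ is immediate, so a minimizer for $k$ then has size $Z(G)$ and forces from $V_1$ to $V_2$, which is exactly what is wanted. The key structural fact I would establish first is that in a cobipartite graph no force is possible whenever \emph{both} cliques contain at least two unfilled vertices: a filled vertex of a clique $V_i$ is adjacent to every other vertex of $V_i$, so if $V_i$ has two or more unfilled vertices then any filled vertex of $V_i$ has at least two unfilled neighbors and cannot force, while an unfilled vertex never forces. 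Since the number of unfilled vertices in each clique only decreases during forcing, it follows that for any zero forcing set $F$ at least one of $|V_1\setminus F|\le 1$ or $|V_2\setminus F|\le 1$ holds, and in fact the relevant count stays at most $1$ throughout the process.

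Next I would dispatch the favorable case, where saturation enters. Suppose $F$ is an optimal zero forcing set with $V_1\subseteq F$. Then every forced vertex lies in $V_2$, and a vertex of $V_2$ can force only when it is the \emph{unique} remaining unfilled vertex of $V_2$ (its clique-neighbors cover all of $V_2$ and its other neighbors lie in the already-filled $V_1$). Hence at most one force of the sequence originates in $V_2$, namely the one filling the last unfilled vertex $v$ of $V_2$; by saturation (\thref{def:saturated}) $v$ has a neighbor in $V_1$, and that neighbor may force $v$ instead. Rerouting this single force exhibits a complete forcing sequence in which every forcing vertex lies in $V_1$, so $F$ forces from $V_1$ to $V_2$. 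The symmetric statement holds when $V_2\subseteq F$.

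It then remains to produce an optimal zero forcing set that contains a whole clique. Starting from any optimal $F$, the structural fact lets me assume $|V_1\setminus F|\le 1$, and if this number is $0$ we are done by the previous paragraph. If it is $1$, let $w$ be the lone unfilled vertex of $V_1$ and consider the unique force $x\to w$ that fills it. The main case is when $x\in V_1$ and $x\to w$ is the first force of the sequence; then the force condition makes all of $x$'s neighbors in $V_2$ belong to $F$, so choosing (by saturation) one such neighbor $z$ and setting $\tilde F=(F\cup\{w\})\setminus\{z\}$ gives a set of size $|F|$ with $V_1\subseteq\tilde F$ from which $x\to z$ is a legal first force returning the configuration to exactly the state reached just after $x\to w$ originally; the rest of the original forcing then replays verbatim, so $\tilde F$ is an optimal zero forcing set containing $V_1$. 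A short argument shows that the only way to avoid this case—whether because $x\in V_2$, or because some force precedes $x\to w$—forces $|V_2\setminus F|\le 1$ as well, leaving only the situation $|V_1\setminus F|=|V_2\setminus F|=1$; there the analogous swap, applied to whichever clique's internal force occurs first, produces an optimal zero forcing set containing $V_1$ or one containing $V_2$.

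Finally, if the construction lands on an optimal set containing $V_2$ (which forces from $V_2$ to $V_1$) rather than one containing $V_1$, I would upgrade the direction using \thref{thm:triangles and sets forcing X to Y}: applied to the partition $(V_1,V_2)$ with its cross-pattern $\Y$ it gives that the smallest size of a set forcing from $V_1$ to $V_2$ is $m+n-\tri(\Y)$, while applied to $(V_2,V_1)$ with the transposed pattern it gives $m+n-\tri(\Y^{T})$ for the reverse direction; since transposing a triangle yields a triangle, $\tri(\Y)=\tri(\Y^{T})$ and the two minimum sizes coincide. Having exhibited an optimal set forcing in one direction then forces equality with $Z(G)$ for both directions, and a minimizer for the $V_1$-to-$V_2$ direction is the desired optimal zero forcing set. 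I expect the main obstacle to be the case analysis of the third paragraph—verifying that each swap produces a genuine complete forcing sequence of the same length and that the residual case $|V_1\setminus F|=|V_2\setminus F|=1$ is covered exhaustively—together with pinning down exactly where saturation must be invoked to supply a replacement cross-neighbor or a final forcing vertex.
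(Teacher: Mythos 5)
Your proposal is correct, and its two central devices coincide with the paper's: the saturation-based swap that trades a filled cross-neighbor $z$ for the lone unfilled clique vertex $w$ (your $\tilde F=(F\cup\{w\})\setminus\{z\}$ is precisely the paper's $F'=(F\setminus\{x\})\cup\{w\}$), and the rerouting observation that once $V_1\subseteq F$, at most one force can originate in $V_2$ --- necessarily the last force of all --- which saturation lets a $V_1$-neighbor perform instead. The differences are in the scaffolding. The paper organizes its case analysis around the first force of an arbitrary optimal forcing sequence (forcer in $V_1$ or $V_2$; forced vertex in $V_1$ or $V_2$), whereas you organize it around the counts $|V_i\setminus F|$ via your structural fact that no force is possible while both cliques have two or more unfilled vertices. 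Your compressed ``short argument'' does close: if the force filling the lone unfilled vertex of $V_1$ is not an internal-to-$V_1$ first force, then the actual first force must be internal to $V_2$, which is legal only when $V_2$ has at most one unfilled vertex initially --- though note the residual cases are $(|V_1\setminus F|,|V_2\setminus F|)\in\{(1,0),(1,1)\}$, not only $(1,1)$; the $(1,0)$ case is absorbed by your symmetric favorable case. The most genuine divergence is the direction flip. Where the paper invokes the reversal of a direct zero forcing set (\thref{lem:reversal of direct ZF set}) to convert an optimal set containing $V_2$ into one containing $V_1$, you apply \thref{thm:triangles and sets forcing X to Y} in both orientations and use $\tri(\Y)=\tri(\Y^{T})$ to conclude that the two directional minima coincide. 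This is not circular, since that theorem is proved independently of the present lemma, and it buys a fact of independent interest: for any graph partitioned according to a pattern, the minimum sizes of zero forcing sets forcing in the two directions agree. On the other hand, it is heavier machinery than the paper's reversal argument, which stays inside elementary zero forcing combinatorics and hands you the direct-zero-forcing-set structure of the reversed set explicitly.
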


\begin{proof}
By \thref{def:forcing from X to Y}, we must show that there is an optimal zero forcing set $F$ for $G$ with the following two properties.  The first is that $V_1 \subseteq F$.  The second is that some complete forcing sequence from $F$ has only vertices in $V_1$ force.  (Note that, since $V_2=V(G)\setminus V_1$, the fact that $V_1 \subseteq F$ implies that every vertex that gets forced is in $V_2$).

We begin by showing that we can achieve the first property.  To this end, let $F$ be an optimal zero forcing set for $G$, and let $v\to w$ be the first force in some complete forcing sequence from $F$.
Then $w$ is the only neighbor of $v$ that is initially unfilled.
We first proceed under the assumption that $v\in V_1$.  We will revisit this assumption later in the proof.

We claim that either $V_1 \subseteq F$ already holds, or we can modify $F$ so as to make it hold.  For this, we consider two cases.  First, suppose $w \in V_2$.  Then, since every vertex of $V_1\setminus\{v\}$ is a neighbor of $v$, every vertex in $V_1$ is initially filled.  That is, $V_1 \subseteq F$.

Now suppose $w \in V_1$.  Then all of $V_1\setminus \{w\}$ must be initially filled.  Since $G$ is saturated with respect to the clique partition $(V_1,V_2)$, there is some $w' \in V_2$ that is adjacent to $v$.  In order for $v$ to force $w$, it must be that $w'$ is initially filled.  So let $F' = (F\setminus\{w'\})\cup\{w\}$.  With $F'$ as the set of initially filled vertices, $w'$ is the unique unfilled neighbor of $v$.  Hence, a forcing a sequence from $F'$ can begin with $v$ forcing $w'$, and proceed after that point as did the forcing sequence from $F$.  
(This is because, after the first force in each sequence, the set of filled vertices is the same, namely $F\cup\{w\}=F'\cup\{w'\}$.)
Moreover, $V_1 \subseteq F'$.  So, since $|F'|=|F|=Z(G)$, we may let $F'$ replace $F$ and again achieve $V_1 \subseteq F$.

As shown above, we can choose $F$ such that $V_1 \subseteq F$. 
We now show that some complete forcing sequence from $F$ has only vertices in $V_1$ force.  
Consider any complete forcing sequence from $F$.  
If every vertex that forces in this sequence is in $V_1$, then there is nothing to show.  Otherwise, some force $x\to y$ in the sequence has $x \in V_2$.  Take this to be the first such force.  As $V_1 \subseteq F$, it must be that $y \in V_2$.  The fact that $V_2$ is a clique implies that every vertex in $V_2$ is adjacent to $x$, and hence every vertex in $V_2\setminus\{y\}$ must be filled before $x$ can force $y$.  Since every vertex in $V_1$ is initially filled, this implies that, at the time the force is to occur, every vertex of the graph other than $y$ is filled.  But, since $G$ is saturated with respect to the clique partition $(V_1,V_2)$, some $x' \in V_1$ is adjacent to $y$.  Thus, we can replace the force $x\to y$ in the sequence with $x'\to y$.  By the way the force was chosen, no vertex in $V_2$ forces earlier in the sequence.  And after the force is complete, all vertices in the graph are filled, so there are no subsequent forces.  Hence, in the modified sequence, no vertices in $V_2$ force.  That is, the sequence has only vertices in $V_1$ force, as desired.

Finally, we assumed at the start of the proof that $v \in V_1$.  If instead $v \in V_2$, then a symmetric argument shows that $F$ can be modified if necessary to give
$V_2 \subseteq F$, with a complete forcing sequence from $F$ such that only vertices in $V_2$ force.
Since every vertex forced
is forced by a vertex that was initially filled, every forcing chain has a length of $0$ or $1$, and $F$ is a direct zero forcing set for $G$.
Let $R$ be the set of vertices that force, and let $F'=V(G)\setminus R$.
Then, by \th\ref{lem:reversal of direct ZF set}, the set $F'$ is
a zero forcing set for $G$ with $|F'|=|F|$.
As the forcing sequence from $F$ has only vertices in $V_2$ force, $R\subseteq V_2$.  Thus, $F' = V(G)\setminus R \supseteq V(G)\setminus V_2 = V_1$.
Hence, $F'$ is an optimal zero forcing set for $G$ with $V_1\subseteq F'$.  We may therefore let $F'$ replace $F$ and, following the argument in the previous paragraph, deduce that some complete forcing sequence from $F$ has only vertices in $V_1$ force, as desired.
\end{proof}

\begin{ex}\th\label{ex:saturated is necessary}
The graph shown in Figure \ref{fig:first unsaturated example} shows that the condition of $G$ being saturated is essential for \th\ref{normalized forcing for
cobipartite}.  For $V_1=\{1,2,3\}$ and $V_2=\{4,5,6\}$, a clique partition of $G$ is given by \( (V_1,V_2) \), but $G$ is not saturated with respect to this partition.  Moreover, while in fact
$Z(G)=3$, clearly no zero forcing set of this size can
force from $V_1$ to $V_2$, as such a set would necessarily be equal to $V_1$, and clearly then would fail to force the entire graph.
\end{ex}

\begin{figure}[ht]
    \centering
\begin{tikzpicture}[thick, main node/.style={circle,minimum width=7pt,draw,inner sep=1pt}, filled node/.style={fill={lightgray},circle,minimum width=7pt,draw,inner sep=1pt}]
\def \sf {1.2}
\draw
    (0*\sf,1*\sf) node[main node] (3) {\scriptsize 3}
    (0*\sf,-1*\sf) node[filled node] (4) {\scriptsize 4}
    (0.6*\sf,0*\sf) node[main node] (5) {\scriptsize 5}
    (1.56*\sf,0*\sf) node[main node] (6) {\scriptsize 6}
    (-0.6*\sf,0*\sf) node[filled node] (2) {\scriptsize 2}
    (-1.56*\sf,0*\sf) node[filled node] (1) {\scriptsize 1}
    ;
\draw[color=gray,dashed] (-1.25,-1.25) -- (1.25*\sf,1.25*\sf);
\path
    (1) edge (2)
    	edge (3)
    (2) edge (3)
    (3) edge (6)
    (4) edge (6)
        edge (5)
    (5) edge (6);
\end{tikzpicture}
    \caption{A cobipartite graph that is not saturated with respect to the partition shown.  For this graph, every zero forcing set must include vertices on both sides of the partition.}
\label{fig:first unsaturated example}
\end{figure}
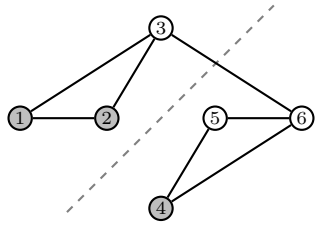

\begin{thm}\th\label{thm:triangle number versus ZF for cobipartite}
Let $\Y$ be an $m\times n$ zero-nonzero pattern and let $G$ be
the cobipartite graph associated with $\Y$.
If $G$ is saturated with respect to the corresponding clique partition, then
\begin{equation}
    Z(G) = m+n-\tri(\Y).    
\end{equation}
\end{thm}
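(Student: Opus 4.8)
The plan is to combine the two results immediately preceding the statement, namely \thref{normalized forcing for cobipartite} and \thref{cor:min ZF set from left to right}, which are set up precisely for this purpose. First I would extract the relevant partition from the hypothesis. By \thref{def:cobipartite associated to pattern}, the assertion that $G$ is the cobipartite graph associated with $\Y$ means there is a clique partition $(V_r,V_c)$ of $G$ that is at the same time a partition of $G$ according to $\Y$ in the sense of \thref{def:graph partitioned according to pattern}. The ``corresponding clique partition'' named in the theorem is exactly this $(V_r,V_c)$, so the saturation hypothesis is taken with respect to it.

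Next I would apply \thref{normalized forcing for cobipartite} with $(V_1,V_2)=(V_r,V_c)$. Since $G$ is cobipartite and saturated with respect to this partition, that lemma produces an optimal zero forcing set $F$ for $G$ that forces from $V_r$ to $V_c$. This is exactly the hypothesis required by \thref{cor:min ZF set from left to right}, again identifying its partition $(V_1,V_2)$ with $(V_r,V_c)$ and recalling that $(V_r,V_c)$ is a partition of $G$ according to $\Y$. Applying that corollary then yields $Z(G)=m+n-\tri(\Y)$ immediately.

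The only work at this stage is bookkeeping: identifying the single partition $(V_r,V_c)$ with the partition named in each of the two prior results, and confirming that the theorem's saturation hypothesis refers to that same partition. I do not expect a genuine obstacle here, because the difficulty has been front-loaded into \thref{normalized forcing for cobipartite}. That is where saturation is actually used, via the reversal argument of \thref{lem:reversal of direct ZF set}, to normalize an arbitrary optimal zero forcing set into one that forces in the correct direction across the clique partition; \thref{ex:saturated is necessary} shows that this normalization can fail without the saturation assumption. Once that lemma is in hand, the present theorem follows as an immediate corollary.
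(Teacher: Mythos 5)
Your proposal is correct and follows exactly the paper's own argument: extract the clique partition that is simultaneously a partition according to $\Y$, invoke \thref{normalized forcing for cobipartite} to obtain an optimal zero forcing set forcing from one clique to the other, and conclude via \thref{cor:min ZF set from left to right}. Your accompanying remarks about where saturation is genuinely used are also consistent with the paper's development.
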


\begin{proof}
Let $(V_1,V_2)$ be the clique partition of $G$ with vertices in $V_1$ corresponding to the rows of $\Y$ and those in $V_2$ corresponding to the columns.  Since $G$ is saturated with respect to this partition, \th\ref{normalized forcing for cobipartite} gives an optimal zero forcing set $F$ for $G$ that forces from $V_1$ to $V_2$.  Hence, 
\th\ref{cor:min ZF set from left to right}
gives the desired equality.
\end{proof}

Note that it is also possible to obtain \th\ref{thm:triangle number versus ZF for cobipartite} by combining Theorems 20, 44, and 45 from \cite{jephian_masters_thesis}.

\begin{ex}\th\label{ex:cobipartite with bounds}
\begin{figure}[ht]
    \centering
\begin{minipage}{2in}
\begin{tikzpicture}[thick, main node/.style={circle,minimum width=7pt,draw,inner sep=1pt}, filled node/.style={fill={lightgray},circle,minimum width=7pt,draw,inner sep=1pt}]
\def \sf {1.2}
\draw
    (0*\sf,1*\sf) node[main node] (3) {\scriptsize 3}
    (0*\sf,-1*\sf) node[main node] (4) {\scriptsize 4}
    (0.6*\sf,0*\sf) node[main node] (5) {\scriptsize 5}
    (1.56*\sf,0*\sf) node[main node] (6) {\scriptsize 6}
    (-0.6*\sf,0*\sf) node[main node] (2) {\scriptsize 2}
    (-1.56*\sf,0*\sf) node[main node] (1) {\scriptsize 1}
    ;
\draw[color=gray,dashed] (-1.25,-1.25) -- (1.25*\sf,1.25*\sf);
\path
    (1) edge (2)
    	edge (3)
		edge (4)
    (2) edge (3)
    	edge (4)
        edge (5)
    (3) edge (4)
        edge (5)
        edge (6)
    (4) edge (6)
        edge (5)
    (5) edge (6);
\end{tikzpicture}
\end{minipage}
\hspace{0.25in}
\begin{minipage}{2in}
       $\Y = \kbordermatrix{
        ~ & 4 & 5 & 6 \\
        1 & * & 0 & 0 \\
        2 & * & * & 0 \\
        3 & * & * & * \\
        }$
\end{minipage}
    \caption{The graph $G$ shown above is the cobipartite graph associated with the pattern $\Y$ given, with the dashed line indicating the corresponding partition of $G$ into cliques.  Note that $G$ is saturated with respect to this partition.}
    \th\label{fig:3-tree on 6 vertices}
\end{figure}
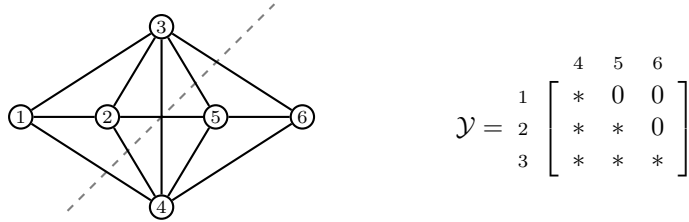

Let $G$ be the graph shown in Figure \ref{fig:3-tree on 6 vertices}.  Then $G$ is the cobipartite graph associated with the pattern $\Y$ given there, and is saturated with respect to this partition.  Since $\Y$ is itself a triangle, $\tri(\Y)=3$, and so from \th\ref{thm:triangle number versus ZF for cobipartite} we have $Z(G)=3+3-3=3$.
\end{ex}

When the cobipartite graph $G$ associated with the zero-nonzero pattern $\Y$ is saturated with respect to the corresponding partition,
\thref{thm:triangle number versus ZF for cobipartite}
gives a relationship between the parameter $Z(G)$ that bounds the maximum nullity $M(G)$ on the one hand and
the parameter $\tri(\Y)$ that bounds the minimum rank $\mr(\Y)$ on the other.
It was previously shown in \cite{not_necessarily_symmetric} that the corresponding quantities that are bounded, namely $M(G)$ and $\mr(\Y)$, obey exactly the same relationship, at least when considered over an infinite field.

\begin{thm}[{\cite[Theorem 3.1]{not_necessarily_symmetric}}]\th\label{thm:not necessarily symmetric min rank result}
Let $G$ be the cobipartite graph associated with the $m\times n$ zero-nonzero pattern $\Y$, and suppose $G$ is saturated with respect to the corresponding partition.  Then, over an infinite field, $\mr(\Y)=\mr(G)$, or, equivalently,
\begin{equation}\label{eqn:max nullity and pattern min rank}
    M(G) = m+n-\mr(\Y).    
\end{equation}
In addition, over the reals, $\mr(\Y)=\mr_+(G)$, or, equivalently,
\[
    M_+(G) = m+n-\mr(\Y).
\]
\end{thm}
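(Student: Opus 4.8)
The plan is to read off the block structure that the clique partition imposes on $\gqual{G}$ and to realize the minimum pattern rank by a single Gram‑type matrix. Order the vertices so that $V_r=\{r_1,\dots,r_m\}$ precedes $V_c=\{c_1,\dots,c_n\}$; then every $A\in\gqual{G}$ has the form $A=\left[\begin{smallmatrix}B&X\\ X^{T}&C\end{smallmatrix}\right]$, where $B$ and $C$ are symmetric with \emph{every} off‑diagonal entry nonzero (since $V_r,V_c$ are cliques) and with unconstrained diagonal, and where the $m\times n$ block $X$ has zero‑nonzero pattern exactly $\Y$. Because $X$ is a submatrix of $A$ with pattern $\Y$, we have $\operatorname{rank}X\ge\mr(\Y)$ and hence $\operatorname{rank}A\ge\mr(\Y)$ for every such $A$; the same submatrix argument applies to positive semidefinite $A$. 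This yields the easy inequalities $\mr(G)\ge\mr(\Y)$ and $\mr_{+}(G)\ge\mr(\Y)$, so everything reduces to producing a matrix in $\gqual{G}$ of rank $\mr(\Y)$ (positive semidefinite, over $\mathbb{R}$).

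To this end set $r=\mr(\Y)$, fix a realization $X$ of $\Y$ with $\operatorname{rank}X=r$, and take a rank factorization $X=UV^{T}$ with $U\in\mathbb{F}^{m\times r}$, $V\in\mathbb{F}^{n\times r}$ of full column rank $r$. Form
\[
A=\begin{pmatrix}U\\ V\end{pmatrix}\begin{pmatrix}U^{T}&V^{T}\end{pmatrix}=\begin{pmatrix}UU^{T}&X\\ X^{T}&VV^{T}\end{pmatrix}.
\]
Since the left factor has only $r$ columns, $\operatorname{rank}A\le r$, while the submatrix $X$ forces $\operatorname{rank}A\ge r$; thus $\operatorname{rank}A=r$, and over $\mathbb{R}$ this $A$ is automatically positive semidefinite. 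The only thing to arrange is that $A\in\gqual{G}$: the diagonal is unconstrained and the $X$‑block already realizes $\Y$, so it remains exactly to guarantee that every off‑diagonal entry of $UU^{T}$ and of $VV^{T}$ is nonzero.

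The crux is to use the gauge freedom $U\mapsto UM$, $V\mapsto VM^{-T}$ for $M\in\mathrm{GL}_r(\mathbb{F})$, which preserves $UV^{T}=X$. Writing $u_1,\dots,u_m$ and $v_1,\dots,v_n$ for the rows of $U$ and $V$, the entries to be controlled become $u_i^{T}MM^{T}u_k$ (for $i\ne k$) and $v_j^{T}(MM^{T})^{-1}v_l$ (for $j\ne l$). Here saturation (\thref{def:saturated}) is exactly what is needed: it forces every row and every column of $\Y$ to contain a $\ast$, so every $u_i$ and every $v_j$ is nonzero. For nonzero $a,b\in\mathbb{F}^{r}$ the polynomial $M\mapsto a^{T}MM^{T}b=(M^{T}a)^{T}(M^{T}b)$ is not identically zero, as one sees by choosing an invertible $M^{T}$ sending $a,b$ to vectors of nonzero standard inner product; clearing the determinant, $M\mapsto v_j^{T}\adj(MM^{T})v_l$ is likewise a nonzero polynomial. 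Over an infinite field the product of $\det M$ with these finitely many nonzero polynomials is a nonzero polynomial, hence does not vanish identically, so some $M$ makes all factors nonzero. For that $M$ the corresponding $A$ lies in $\gqual{G}$ with rank $r$, giving $\mr(G)\le\mr(\Y)$; over $\mathbb{R}$ the same $A$ is positive semidefinite, giving $\mr_{+}(G)\le\mr(\Y)$. Combined with the easy inequalities this proves $\mr(\Y)=\mr(G)$ over any infinite field and $\mr(\Y)=\mr_{+}(G)$ over $\mathbb{R}$, which are the stated identities after subtracting from $m+n$.

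I expect the main obstacle to be the genericity step, specifically verifying that each forbidden off‑diagonal Gram entry defines a \emph{proper} subvariety of $\mathrm{GL}_r(\mathbb{F})$. The $UU^{T}$ entries are immediate, but the $VV^{T}$ entries involve $(MM^{T})^{-1}$ and require both the adjugate reformulation and the observation—cleanest over the algebraic closure, where $M\mapsto MM^{T}$ has dense image in the invertible symmetric matrices—that $v_jv_l^{T}+v_lv_j^{T}\ne 0$ for nonzero $v_j,v_l$, so that the associated polynomial is genuinely nonzero. Saturation is precisely the hypothesis keeping every $u_i,v_j$ nonzero, and the infinite‑field hypothesis is precisely what prevents the finitely many forbidden hypersurfaces together with $\{\det M=0\}$ from exhausting $\mathrm{GL}_r(\mathbb{F})$; both hypotheses therefore enter the argument essentially.
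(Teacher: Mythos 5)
The paper does not prove this statement at all: it is imported verbatim from \cite[Theorem 3.1]{not\_necessarily\_symmetric}, so there is no internal proof to compare yours against, and your argument must be judged on its own. It holds up. The lower bound $\mr_+(G)\ge\mr(G)\ge\mr(\Y)$ via the off-diagonal submatrix is exactly the observation the paper itself makes later (in the proof of \thref{thm:equality triangle number and equality Z}). For the upper bound, your construction is a correct genericity argument: the rank factorization $X=UV^{T}$, the Gram matrix built from $\bigl(\begin{smallmatrix}U\\ V\end{smallmatrix}\bigr)$, and the gauge action $U\mapsto UM$, $V\mapsto VM^{-T}$ reduce everything to finding one $M\in\mathrm{GL}_r(\mathbb F)$ avoiding finitely many proper polynomial hypersurfaces; saturation enters exactly where you say it does (it makes every row $u_i$ of $U$ and $v_j$ of $V$ nonzero, since a zero row of $U$ or $V$ would force a zero row or column of $X$), the infinite-field hypothesis enters exactly to find a common nonvanishing point of the product polynomial, and over $\mathbb R$ the same Gram matrix is automatically positive semidefinite, which delivers the $\mr_+$ clause. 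One caveat about your closing paragraph: the claim that $v_jv_l^{T}+v_lv_j^{T}\neq 0$ for nonzero $v_j,v_l$ fails in characteristic $2$ when $v_j=v_l$, and the theorem is asserted over arbitrary infinite fields (the paper even invokes it over infinite fields of characteristic $2$ in Example \ref{ex:15 vx graph inequality with Zhat}); however, this is only an aside about an alternative route, and the argument you actually run in the body --- choosing an invertible matrix sending the two nonzero vectors to vectors of nonzero standard inner product, applied to $M^{T}$ for the $UU^{T}$ entries and to $M^{-1}$ (after clearing $\det(MM^{T})$ via the adjugate) for the $VV^{T}$ entries --- is characteristic-independent, so the proof as written is complete.
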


Combining \th\ref{thm:triangle number versus ZF for cobipartite,thm:not necessarily symmetric min rank result} gives the following result
that, for a pattern $\Y$,
relates the question of whether $\mr(\Y)=\tri(\Y)$ to that of whether $M(G)=Z(G)$ for the associated cobipartite graph $G$.

\begin{thm}\th\label{thm:equality triangle number and equality Z}
Let $G$ be the cobipartite graph associated with the $m\times n$ zero-nonzero pattern $\Y$, and suppose $G$ is saturated with respect to the corresponding partition.
Then, over any field, $M(G)=Z(G)$ implies $\mr(\Y)=\tri(\Y)$.  Moreover, when the field is infinite, the converse also holds.
\end{thm}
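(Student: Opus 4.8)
The plan is to deduce both implications from the two exact formulas already available---$Z(G)=m+n-\tri(\Y)$ from \thref{thm:triangle number versus ZF for cobipartite} (which is purely combinatorial and so holds over any field) and $M(G)=m+n-\mr(\Y)$ from \thref{thm:not necessarily symmetric min rank result} (which requires an infinite field)---together with two one-sided bounds valid over every field: the fundamental lower bound $\tri(\Y)\le\mr(\Y)$ on the pattern side, and a companion inequality $\mr(\Y)\le\mr(G)$ on the graph side.

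For the first (unconditional) implication, I would first record the field-independent inequality $\mr(\Y)\le\mr(G)$. Let $(V_1,V_2)$ be the clique partition witnessing that $G$ is associated with $\Y$. For any $A\in\gqual{G}$, the off-diagonal block $A[V_1,V_2]$ realizes the pattern $\Y$---this is exactly what it means for $(V_1,V_2)$ to be a partition of $G$ according to $\Y$---so $\operatorname{rank}(A)\ge\operatorname{rank}(A[V_1,V_2])\ge\mr(\Y)$; minimizing over $A$ gives $\mr(G)\ge\mr(\Y)$. Rewriting via $M(G)=m+n-\mr(G)$ yields $M(G)\le m+n-\mr(\Y)$. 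I would then chain the four facts,
\[
M(G)\le m+n-\mr(\Y)\le m+n-\tri(\Y)=Z(G),
\]
where the middle inequality is $\tri(\Y)\le\mr(\Y)$ and the final equality is \thref{thm:triangle number versus ZF for cobipartite}. If $M(G)=Z(G)$, the chain collapses to a string of equalities, forcing $m+n-\mr(\Y)=m+n-\tri(\Y)$, that is, $\mr(\Y)=\tri(\Y)$. Since this argument never invokes an infinite field, it holds over any field.

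For the converse, I would now assume the field is infinite, so that \thref{thm:not necessarily symmetric min rank result} applies and gives the exact equality $M(G)=m+n-\mr(\Y)$. Combining this with $Z(G)=m+n-\tri(\Y)$ and the hypothesis $\mr(\Y)=\tri(\Y)$ immediately yields $M(G)=Z(G)$.

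The only delicate point is tracking which field class each half requires. Over an infinite field both implications are trivial, since one simply equates the two exact formulas for $M(G)$ and $Z(G)$; the real content is that one implication survives over an arbitrary, possibly finite, field. The main obstacle is therefore to avoid using \thref{thm:not necessarily symmetric min rank result} in that direction. I handle this by replacing the infinite-field equality $M(G)=m+n-\mr(\Y)$ with the weaker but field-independent inequality $M(G)\le m+n-\mr(\Y)$ coming from the submatrix-rank observation above, after which the squeeze between this bound and $Z(G)$ does the rest.
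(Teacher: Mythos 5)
Your proposal is correct and follows essentially the same route as the paper: both proofs combine \thref{thm:triangle number versus ZF for cobipartite} with the field-independent squeeze $\mr(G)\ge\mr(\Y)\ge\tri(\Y)$ (coming from the fact that every matrix in $\gqual{G}$ contains a submatrix with pattern $\Y$) for the any-field direction, and invoke \thref{thm:not necessarily symmetric min rank result} only for the infinite-field converse. Your presentation as a collapsing chain of inequalities rather than a sequence of equivalences is a cosmetic difference, not a different argument.
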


\begin{proof}
By \th\ref{thm:triangle number versus ZF for cobipartite},
the statement that $M(G)=Z(G)$ is equivalent to
$M(G)=m+n-\tri(\Y)$.  Since $G$ has $m+n$ vertices, this is equivalent to
$m+n-\mr(G)=m+n-\tri(\Y)$, which is clearly equivalent to
$\mr(G)=\tri(\Y)$.
Over an infinite field, \thref{thm:not necessarily symmetric min rank result} gives that this is equivalent to $\mr(\Y)=\tri(\Y)$.

Now, without any restriction on the field, assume $M(G)=Z(G)$.  Then, by the above,
$\mr(G)=\tri(\Y)$.
Note that every matrix in $\gqual{G}$ has a submatrix with the zero-nonzero pattern $\Y$, which implies that $\mr(G) \ge \mr(\Y) \ge \tri(\Y)$.  So here we have equality throughout.  In particular, then, $\mr(\Y)=\tri(\Y)$.
\end{proof}

\begin{ex}\th\label{ex:15 vx graph from 8x7 pattern}

\begin{figure}[ht]
    \centering
$\overline{G} ~=~ $\begin{minipage}{2.1in}
\begin{tikzpicture}[thick, main node/.style={circle,minimum width=15pt,draw,inner sep=1pt,scale=0.7}, filled node/.style={fill={lightgray},circle,minimum width=7pt,draw,inner sep=1pt},scale=0.7]

\draw (0,4) node[main node] (2) {2};
\draw (0,3) node[main node] (13) {13};
\draw (0,2) node[main node] (8) {8};
\draw (0,0) node[main node] (9) {9};
\draw (2*0.866,2*0.5) node[main node] (10) {10};
\draw (2*-0.866,2*0.5) node[main node] (11) {11};
\draw (2*-0.866,2*-0.5) node[main node] (6) {6};
\draw (3*-0.866,3*-0.5) node[main node] (15) {15};
\draw (4*-0.866,4*-0.5) node[main node] (3) {3};
\draw (2*0.866,2*-0.5) node[main node] (7) {7};
\draw (3*0.866,3*-0.5) node[main node] (14) {14};
\draw (4*0.866,4*-0.5) node[main node] (1) {1};
\draw (2*0,2*-1) node[main node] (12) {12};
\draw (4*0.866,4*0.5) node[main node] (5) {5};
\draw (-4*0.866,4*0.5) node[main node] (4) {4};

\path
	(1) edge (10)
		edge (12)
		edge (14)
	(2) edge (10)
		edge (11)
		edge (13)
	(3) edge (11)
		edge (12)
		edge (15)
	(4) edge (13)
		edge[bend right=30] (14)
		edge (15)
	(5) edge (13)
		edge (14)
	(6) edge (9)
		edge (11)
		edge (12)
		edge (15)
	(7) edge (9)
		edge (10)
		edge (12)
		edge (14)
	(8) edge (9)
		edge (10)
		edge (11)
		edge (13)
;
\end{tikzpicture}

\end{minipage}
\hspace{0.5in}
\begin{minipage}{2.5in}
       $\Y ~=~ \kbordermatrix{%
    ~ & 9 & 10 & 11 & 12 & 13 & 14 & 15 \\   
	1 & * & 0 & * & 0 & * & 0 & * \\  %
	2 & * & 0 & 0 & * & 0 & * & * \\  %
	3 & * & * & 0 & 0 & * & * & 0 \\  %
	4 & * & * & * & * & 0 & 0 & 0 \\  %
	5 & * & * & * & * & 0 & 0 & * \\  %
	6 & 0 & * & 0 & 0 & * & * & 0 \\  %
	7 & 0 & 0 & * & 0 & * & 0 & * \\  %
	8 & 0 & 0 & 0 & * & 0 & * & * \\  %
}$
\end{minipage}
    \caption{The bipartite graph shown above is the complement of the cobipartite graph $G$ associated with the given zero-nonzero pattern $\Y$ (taken from \cite[Example 25]{deaett2020}) satisfying $\mr(\Y) > \tri(\Y)$ over every field.
    \thref{thm:equality triangle number and equality Z} therefore gives $M(G) < Z(G)$ over every field.
    }
    \label{fig:15 vertex graph with M neq Z}
\end{figure}

The $8\times 7$ zero-nonzero pattern $\Y$ given in Figure \ref{fig:15 vertex graph with M neq Z}
was shown in \cite[Example 25]{deaett2020} to have $\tri(\Y)=4 < \mr(\Y)$ over every field.  Hence, taking $G$ to be the $15$-vertex cobipartite graph associated with $\Y$  gives $M(G) < Z(G)$ over every field, by \th\ref{thm:equality triangle number and equality Z}.
(The complement of $G$ is shown in Figure \ref{fig:15 vertex graph with M neq Z}.)
\end{ex}

\begin{figure}[ht]
    \centering
\begin{tikzpicture}[thick, main node/.style={circle,minimum width=10pt,draw,inner sep=1pt,scale=0.7}, filled node/.style={fill={lightgray},circle,minimum width=7pt,draw,inner sep=1pt},scale=0.85]
    \foreach \x [count=\i] in {6,7,8,9,10}
    {
    \draw (\i*360/5+90:1) node[main node] (ins\i) {}; %
    \draw  (\i*360/5+90:2) node[main node] (out\i) {}; %
    \path (ins\i) edge (out\i);
    }
    \draw (ins1) edge (ins2)
          (ins2) edge (ins3)
          (ins3) edge (ins4)
          (ins4) edge (ins5)
          (ins5) edge (ins1);
      \end{tikzpicture}
    \hspace{0.75in}
\begin{tikzpicture}[thick, main node/.style={circle,minimum width=10pt,draw,inner sep=1pt,scale=0.7}, filled node/.style={fill={lightgray},circle,minimum width=7pt,draw,inner sep=1pt},scale=0.85]
    \foreach \x [count=\i] in {6,7,8,9,10}
    {
    \draw (\i*360/5+90:1) node[main node] (ins\i) {}; %
    \draw  (\i*360/5+90:1.75) node[main node] (out\i) {}; %
    \draw  (\i*360/5+90:2.5) node[main node] (wayout\i) {}; %
    \path (ins\i) edge (out\i);
    \path (out\i) edge (wayout\i);
    }
    \draw (ins1) edge (ins2)
          (ins2) edge (ins3)
          (ins3) edge (ins4)
          (ins4) edge (ins5)
          (ins5) edge (ins1);
      \end{tikzpicture}
    
    \caption{The pentasun graph, also called the $5$-sun or $5$-corona (left) and the extended pentasun graph (right)}
    \label{fig:pentasun}
\end{figure}
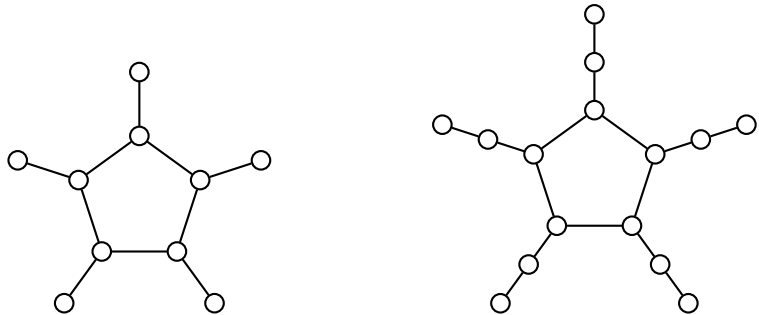

Letting $P$ be the pentasun graph shown in Figure \ref{fig:pentasun}, it is proved in
\cite[Example 4.1]{original_ZF_paper} that
$M(P) < Z(P)$ over every field, the same gap exhibited by the graph $G$ of \thref{ex:15 vx graph from 8x7 pattern}.  But \cite[Example 2.25]{tree-width-JGT} shows that in fact $\hat Z(P) < Z(P)$, which in some sense explains the gap, since $M(P) \le \hat Z(P)$ over every field.  It is then natural to ask whether or not this same situation explains the gap for the graph $G$.  Hence, we turn our attention to $\hat Z (G)$ and other zero forcing parameters for cobipartite graphs.

\subsection{Zero forcing parameters for cobipartite graphs}

Because \thref{treewidthdiagram} gives
\[
Z_+(G) \le \hat Z(G) \le Z(G),
\]
we will gain information about all three of these parameters by proving a strong result about $Z_+(G)$.  For this, we first need the two following lemmas.

\begin{lem}\th\label{lem:first_force_is_psd}
Let $G$ be a cobipartite graph and let $F$ be a positive semidefinite zero forcing set for $G$.  Then, in any fixed forcing sequence from $F$, any force that is a standard force is followed only by standard forces.  That is, all of the strictly positive semidefinite forces occur before any of the standard forces.
\end{lem}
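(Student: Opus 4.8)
The plan is to track, throughout the fixed forcing sequence, the structure of the subgraph induced on the currently unfilled vertices, exploiting the fact that a cobipartite graph has a clique partition. Fix a clique partition $(V_1,V_2)$ of $G$, and at any point in the sequence let $U_i$ denote the set of unfilled vertices lying in $V_i$. Since $V_1$ and $V_2$ are cliques, each of $U_1$ and $U_2$ induces a complete subgraph, so the subgraph induced on the unfilled vertices consists of two cliques together with whatever edges run between them. Consequently this subgraph has at most two connected components: it is connected whenever some edge joins $U_1$ to $U_2$ (or one of the $U_i$ is empty), and it splits into exactly the two components $U_1$ and $U_2$ precisely when both are nonempty and no edge joins them.

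The key observation is that a strictly positive semidefinite force can occur only when this unfilled subgraph is disconnected. Indeed, if the unfilled subgraph is connected, then the single component containing the forced vertex is all of it, so the condition defining a positive semidefinite force---that the forcing vertex have exactly one neighbor in that component---coincides with the ordinary zero forcing condition, making the force standard rather than strictly positive semidefinite.

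I would then show that a single standard force empties one of the two cliques of all unfilled vertices. Suppose a standard force $v \to u$ occurs, with $v \in V_a$. Since $v$ is adjacent to every other vertex of the clique $V_a$ and $u$ is its unique unfilled neighbor, every vertex of $V_a$ other than $v$ and (possibly) $u$ is already filled; as $v$ is filled and $u$ becomes filled by the force, immediately afterward every vertex of $V_a$ is filled, i.e., $U_a = \emptyset$. From this point on the unfilled vertices all lie in the single clique $V_b$ (filled vertices never become unfilled), so at every later step the unfilled subgraph is a clique and hence connected. By the previous paragraph, no strictly positive semidefinite force can occur after this standard force. This yields the conclusion: every strictly positive semidefinite force precedes every standard force.

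I do not anticipate a serious obstacle; the argument is essentially structural. The one point requiring care is the bookkeeping of the standard-force analysis across the subcases $u \in V_a$ and $u \in V_b$ (and the degenerate case $V_a = \{v\}$), but in each case the same conclusion $U_a = \emptyset$ holds immediately after the force, so these collapse to a single line. The main conceptual step is recognizing that disconnection of the unfilled subgraph is simultaneously necessary for a strictly positive semidefinite force and impossible once a standard force has emptied one of the two cliques.
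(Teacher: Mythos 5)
Your proposal is correct and follows essentially the same route as the paper's proof: a standard force by a vertex in one clique fills that entire clique, after which no strictly positive semidefinite force can occur. The paper leaves that last step as ``it is not hard to see,'' whereas you make it explicit via the observation that the unfilled subgraph is then contained in a single clique, hence connected, so the positive semidefinite rule collapses to the standard rule---a worthwhile elaboration, but the same argument.
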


\begin{proof}
Let $(A,B)$ be a  clique partition of $G$.  Fix any forcing sequence from $F$ and
consider the first standard force in this sequence.  By symmetry, we may assume that the vertex that forces is $a \in A$.  After this force occurs, every neighbor of $a$ is filled; in particular, every vertex in $A$ is filled.  
It is not hard to see that therefore no subsequent force can be a 
strictly positive semidefinite force.
\end{proof}

\begin{lem}\th\label{lem:two_unfilled_vertices}
Let $G$ be a cobipartite graph with clique partition $(A,B)$.
Let $F$ be a positive semidefinite zero forcing set for $G$
and suppose $|F|=Z_+(G) < Z(G)$.
If, in some complete forcing sequence from $F$, the first vertex to force is in $A$, then $A$ contains at least two initially unfilled vertices.
\end{lem}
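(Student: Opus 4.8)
The plan is to argue by contradiction: assume that $A$ contains at most one initially unfilled vertex and produce from this a standard zero forcing set of size $|F| = Z_+(G)$, which would give $Z(G) \le Z_+(G)$ and contradict the hypothesis $Z_+(G) < Z(G)$. Write $U = V(G)\setminus F$ for the initially unfilled vertices and set $U_A = A\cap U$ and $U_B = B\cap U$. The first step is to show that the first force in the given sequence, made by some $a\in A$, must be a strictly positive semidefinite force: if it were a standard force, then by \thref{lem:first_force_is_psd} every force in the sequence would be standard, so $F$ would be an ordinary zero forcing set, giving $Z(G)\le|F|=Z_+(G)$, a contradiction.

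Next I would extract the structure forced by this observation. Since $G-F$ is an induced subgraph of a cobipartite graph, it is itself cobipartite with clique parts $U_A$ and $U_B$; as each part is a clique and hence connected, $G-F$ has at most two connected components. A strictly positive semidefinite force by $a$ requires $a$ to have an unfilled neighbor lying outside the component of the forced vertex, so $G-F$ must have exactly two components, namely $U_A$ and $U_B$, both nonempty and with no edges between them. In particular $|U_A|\ge 1$, so under the contradiction hypothesis $U_A=\{w\}$ for a single vertex $w$; and because there are no edges from $U_A$ to $U_B$, every neighbor of $w$ lies in $F$, i.e.\ $w$ is isolated in $G-F$.

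The key move is then a swap: consider $F'=(F\setminus\{a\})\cup\{w\}$, so that $|F'|=|F|$. Under $F'$ the only unfilled neighbor of $w$ is $a$, since among its $A$-neighbors only $a$ is now unfilled (using $U_A=\{w\}$) and its $B$-neighbors all lie in $F\setminus\{a\}\subseteq F'$; hence $w\to a$ is a legitimate standard force, and performing it yields the filled set $F\cup\{w\}$, in which all of $A$ is filled. From here I would note that $F\cup\{w\}$ is a positive semidefinite zero forcing set, being a superset of $F$, and that while forcing from it the unfilled vertices always lie inside the clique $B$ and therefore form a single connected component, so that every remaining force is in fact standard. Thus $F'$ standard-forces all of $G$, giving the contradiction. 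The main obstacle I anticipate is precisely this last step: pinning down the correct swap $F'=(F\setminus\{a\})\cup\{w\}$ and verifying that the forcing from $F'$ proceeds entirely by standard forces, with the crucial point being that once all of $A$ has been filled the remaining vertices of $B$ form a single clique and are therefore forced by ordinary forces.
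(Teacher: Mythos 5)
Your proof is correct and follows essentially the same route as the paper's: the same initial observation that the first force must be strictly positive semidefinite, the same structural conclusion that the unfilled sets $U_A$ and $U_B$ are nonempty with no edges between them, and the identical swap $F'=(F\setminus\{a\})\cup\{w\}$ yielding a standard zero forcing set of size $Z_+(G)$. The only cosmetic difference is that the paper finishes by citing \thref{lem:first_force_is_psd} (a standard force is followed only by standard forces), whereas you re-derive that fact directly by noting the remaining unfilled vertices lie in the single clique $B$ — the same underlying argument.
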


\begin{proof}
Fix some complete forcing sequence from $F$, and let $a \in A$ be the first vertex to force in this sequence.
This first force must be a strictly positive semidefinite force since, otherwise,
by \th\ref{lem:first_force_is_psd}, 
all forces would be standard forces, contradicting $Z_+(G) < Z(G)$.
Hence, the initially unfilled subgraph of $G$ must have at least two connected components.  This implies that each of $A$ and $B$ contains at least one unfilled vertex, and that no unfilled vertex in $A$ is adjacent to any unfilled vertex in $B$.

Assume for the sake of contradiction that some vertex $y$ is the unique vertex in $A$ that is initially unfilled.  Then $y$ has no unfilled neighbors in $A$, and, by the above, $y$ has no unfilled neighbors in $B$ either.  Since $y$ has no unfilled neighbors, we can consider the set $F' = (F\setminus\{a\})\cup\{y\}$ as an initially-filled set.  A possible first force from $F'$ is then to let $y$ force $a$.  Following that force, the set of filled vertices is strictly a superset of $F$, showing that $F'$ is a zero forcing set. But since that first force from $F'$ is a standard force,
\th\ref{lem:first_force_is_psd} shows that $F'$ is actually a standard zero forcing set.  Since $|F'|=|F|=Z_+(G)$, this contradicts $Z_+(G) < Z(G)$.
\end{proof}

We are now prepared to prove that the ordinary zero forcing number and the positive semidefinite zero forcing number are equal for every cobipartite graph.  Note that this does not require any assumption that the cobipartite graph is saturated with respect to any partition.

\begin{thm}\th\label{thm:Z is Zplus for cobipartite}
If a graph $G$ is cobipartite, then
$Z_+(G)=Z(G)$.
\end{thm}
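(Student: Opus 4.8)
The plan is to prove the two inequalities separately. The inequality $Z_+(G)\le Z(G)$ is immediate from \thref{treewidthdiagram}, so the content lies in the reverse inequality $Z(G)\le Z_+(G)$, which I would establish by contradiction: assume $Z_+(G)<Z(G)$ and produce a standard zero forcing set of size $Z_+(G)$. Fix a clique partition $(A,B)$ of $G$, let $F$ be an optimal positive semidefinite zero forcing set, and for any such $F$ write $A_0=A\setminus F$ and $B_0=B\setminus F$ for the unfilled parts of the two cliques. By \thref{lem:first_force_is_psd} the strictly positive semidefinite forces in any complete forcing sequence from $F$ all precede the standard ones; in particular the \emph{first} force must be strictly positive semidefinite, since otherwise the whole sequence would be standard and $F$ would be a standard zero forcing set of size $Z_+(G)<Z(G)$. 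A strictly positive semidefinite first force means the initially unfilled subgraph has at least two components, and since $A$ and $B$ are cliques this forces the unfilled set to split as $A_0\sqcup B_0$, both nonempty and with no edges between them. Applying \thref{lem:two_unfilled_vertices} to the clique containing the first forcing vertex (say $A$) gives $|A_0|\ge 2$; as that vertex is adjacent to all of $A_0$, it cannot force within its own component, so the first force is a \emph{cross force} $a\to b$ with $a\in A\cap F$ and $b\in B_0$, where $b$ is the unique neighbor of $a$ in $B_0$.

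The heart of the argument is a swap that shrinks $B_0$. Among all pairs consisting of an optimal positive semidefinite zero forcing set together with a complete forcing sequence whose first force is a cross force, choose one minimizing $\beta:=|B_0|$, the number of unfilled vertices on the target side; the previous paragraph shows this class is nonempty. Set $F'=(F\setminus\{a\})\cup\{b\}$, so $|F'|=Z_+(G)$ and the unfilled set of $F'$ is $(A_0\cup\{a\})\sqcup(B_0\setminus\{b\})$, again two cliques with no edges between them. The force $b\to a$ is available from $F'$ and leads to the filled set $F\cup\{b\}\supseteq F$, so $F'$ is again an optimal positive semidefinite zero forcing set. If $\beta=1$, then $B_0\setminus\{b\}=\emptyset$ and $a$ is the only unfilled neighbor of $b$ (there are no edges from $b$ to $A_0$), so $b\to a$ is a \emph{standard} force; by \thref{lem:first_force_is_psd} the whole completion is standard, making $F'$ a standard zero forcing set of size $Z_+(G)$, a contradiction.

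When $\beta\ge 2$, I would show $F'$ admits a cross first force into the smaller target clique $B_0\setminus\{b\}$, contradicting minimality of $\beta$. Since $F'$ is a positive semidefinite zero forcing set, $B_0\setminus\{b\}$ must fill; the first of its vertices to be filled is forced by a vertex with exactly one neighbor in the (still intact) clique $B_0\setminus\{b\}$. No vertex of $A_0\cup\{a\}$ has a neighbor there (no cross edges to $A_0$, and $a$ met $B_0$ only in $b$), and every vertex of $B\cap F'$ is adjacent to all of $B_0\setminus\{b\}$, so when $|B_0\setminus\{b\}|\ge 2$ this first filler must be some $a'\in A\cap F'$ with exactly one neighbor in $B_0\setminus\{b\}$. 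Then $a'$ can force at the very start of a sequence from $F'$, giving a cross first force with target clique of size $\beta-1<\beta$ and contradicting minimality. The only remaining possibility is $|B_0\setminus\{b\}|=1$, i.e.\ $\beta=2$ with $B_0=\{b,c\}$; if some $A$-vertex is adjacent to $c$ the same reduction applies, and otherwise $c$ has no cross edges at all, so the alternative swap $F^{\circ}=(F\setminus\{a\})\cup\{c\}$ makes $c\to b$ a standard force (its only unfilled neighbor is $b$), whence $F^{\circ}$ is an optimal positive semidefinite zero forcing set forcing entirely by standard forces (again by \thref{lem:first_force_is_psd})—a standard zero forcing set of size $Z_+(G)$, the final contradiction.

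I expect the main obstacle to be precisely this boundary case together with the bookkeeping around the swaps: one must check that each swapped set is still a positive semidefinite zero forcing set of the same size, that the two-clique/no-cross-edge structure persists so that the relevant forcer is genuinely available as a \emph{first} force, and that the target side really shrinks. The delicate point is that reducing $\beta$ by one relies on a clean cross forcer into the target clique, which is available exactly until the target side has been whittled down to a single vertex filled from within its own clique; handling that last vertex needs the second swap rather than the first. Everything else should be routine once one notes that, for a cobipartite graph, strictly positive semidefinite forces occur only while the unfilled subgraph is a disjoint union of two cliques.
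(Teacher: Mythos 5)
Your proof is correct apart from one small, fixable gap, and it takes a genuinely different route from the paper's. Both arguments share the same opening: by \thref{lem:first_force_is_psd} and \thref{lem:two_unfilled_vertices}, under the assumption $Z_+(G)<Z(G)$ the first force from any optimal positive semidefinite zero forcing set must be a strictly positive semidefinite cross force $a\to b$, with the unfilled set split as $A_0\sqcup B_0$. After that the extremal devices diverge. The paper \emph{maximizes} the length $k$ of an initial run of strictly positive semidefinite forces from $A$ into $B$, and swaps the $B$-side vertex $x$ that later forces the first unfilled $A$-vertex $y$ with $y$ itself; the swapped set then admits a run of length $k+1$, contradicting maximality, with no case analysis at all. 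You instead \emph{minimize} the size $\beta$ of the unfilled target side and swap the first forcer $a$ with its target $b$: then $\beta=1$ gives an immediate standard-force contradiction, $\beta\ge 3$ produces a cross first force into a target side of size $\beta-1$, contradicting minimality, and $\beta=2$ needs either the same reduction or the second swap $F^\circ=(F\setminus\{a\})\cup\{c\}$. The paper's maximization buys uniformity (one exchange, no cases); your minimization buys a more transparent ``shrink the target side until a standard force appears'' picture, at the price of the boundary case --- which is exactly where your write-up is incomplete.

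The gap: in the $\beta=2$, no-cross-edges case you conclude ``whence $F^\circ$ is an optimal positive semidefinite zero forcing set'' from the availability of the single standard force $c\to b$, but the validity of one force does not by itself give completion of the forcing process. The repair is one line using what you already established: after $c\to b$ the filled set is $(F\setminus\{a\})\cup\{b,c\}=F'\cup\{c\}\supseteq F'$, and you have already shown that $F'$ is a positive semidefinite zero forcing set, so by monotonicity the forcing completes. (Alternatively, note that $b\to a$ is available next, since $b$ has no neighbors in $A_0$, which reaches $F\cup\{b,c\}\supseteq F$.) With that sentence added, all three cases close and your argument stands as a valid alternative to the paper's proof.
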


\begin{proof}
Let $G$ be a cobipartite graph with clique partition $(A,B)$.  
As \thref{treewidthdiagram} gives $Z_+(G) \le Z(G)$, it suffices to show $Z(G) \le Z_+(G)$.
So, for the sake of contradiction,
suppose there is a positive semidefinite zero forcing set $F$ for $G$ with $|F|=Z_+(G) < Z(G)$.

Fix a complete forcing sequence from $F$ and let $a_1 \to b_1$ be the first force in this sequence.  Without loss of generality, let $a_1 \in A$.  Then, by \thref{lem:two_unfilled_vertices}, at least two 
vertices of $A$ (and hence neighbors of $a_1$) are
initially unfilled.  (Note that this means they are not in $F$.)  It follows that the force is a strictly positive semidefinite force, and also that $b_1 \in B$.

By the above, there is some $k \ge 1$ such that $a_1\to b_1, a_2\to b_2, \ldots, a_k\to b_k$ is an initial subsequence of the forces with each $a_i \in A$, each $b_i \in B$, and each of the forces a strictly positive semidefinite force.  Crucially, then, we take $k$ to be as large as possible, meaning that both $F$ and the sequence of forces from $F$ are chosen so as to make the length of the maximal initial subsequence with the above properties as large as possible.
To complete the proof, we show that this choice leads to a contradiction, by modifying $F$ to obtain a still longer sequence with the same properties.

As noted above, at least two vertices of $A$ are initially unfilled.
Among those vertices, let $a'$ be the one that is the first to be forced, and let $b'$ be the vertex that forces it.  Then there are at least two unfilled vertices in $A$ (including $a'$) at the point when $b'$ is to force $a'$.  It follows that $b' \in B$.

Since the first force that occurs is a strictly positive semidefinite force, the set of initially unfilled vertices induces two connected components in the graph.  
Thus, the cobipartite structure of the graph implies that no initially unfilled vertex in $A$ is adjacent to any initially unfilled vertex in $B$.
In particular, every neighbor of $a'$ in $B$ is initially filled.
That is, $N(a') \cap B \subseteq F$.
In particular, $b' \in F$.

Now let $F'=(F\setminus \{b'\})\cup\{a'\}$.  
Then, by the above,
every neighbor of $a'$ in $B$ is in $F'$ except for $b'$.
Therefore, when $F'$ is taken as the initially filled set,
$a'$ begins with $b'$ as its only unfilled neighbor in $B$.
Moreover, since $b'\in F$ and $a' \not\in F$, we have $|F'|=|F|$.

As noted above, with $F$ as the set of filled vertices, the unfilled vertices induce two connected components.
We claim that this remains true if $F'$ is taken as the set of filled vertices.
Then the unfilled vertices are the same ones, except that they include $b'$ but not $a'$.
Hence, assuming for the sake of contradiction that these induce a single connected component, some edge adjacent to $b'$ must be a cut edge for the unfilled subgraph.  This must be an edge from $b' \in B$ to some vertex $a'' \in A$ not in $F'$.  
Since $a'\in F'$, we have $a''\neq a'$.  
In addition, since every vertex of $A$ that is in $F$ is also in $F'$, the fact that $a'' \not\in F'$ implies that $a''\not\in F$.
But now consider the original sequence of forces from $F$.
Because $a'$ was chosen as the first vertex of $A$ to be forced, $a''$ must remain unfilled at the point when the force $b'\to a'$ is to occur.  But both are neighbors of $b'$ and, since $A$ is a clique, they are in the same connected component of the graph induced by the unfilled vertices at that point.  But this implies that the force is impossible, and that contradiction proves our claim.  Hence, the vertices in $V(G)\setminus F'$ induce two connected components.

We now argue as follows.  Take $F'$ to be the initially filled set.  Then, since at least two vertices of $A$ are not in $F$, the choice of $F'$ implies that at least one vertex of $A$ is unfilled.  Let $\hat a \in A$ be such a vertex.  As noted above, before any forces occur, $a'$ is filled and has a single unfilled neighbor in $B$, namely $b'$.  Since the unfilled vertices induce two connected components, any unfilled neighbors of $a'$ in $A$ do not affect whether the positive semidefinite forcing rule allows $a'$ to force $b'$, so this can be taken as the initial force.
After this force occurs, the set of filled vertices is $F'\cup\{b'\}$, which is a superset of the positive semidefinite zero forcing set $F$.  It follows that we may take
\begin{equation}\label{eqn:longer initial subsequence of forces}
    a'\to b', a_1 \to b_1,a_2\to b_2,\ldots, a_k\to b_k
\end{equation}
as the initial subsequence of a sequence of forces from $F'$ that results in every vertex being filled.
Because $b'\in F$ while $b_i\not\in F$ for each $i$, the $k+1$ forces in \eqref{eqn:longer initial subsequence of forces} are distinct and form a valid sequence of forces; in particular, the new initial force does not obviate any of the later ones.
Since $|F'|=|F|=Z_+(G)$, the set $F'$ is an optimal positive semidefinite zero forcing set for $G$.
Moreover, each force in \eqref{eqn:longer initial subsequence of forces} features a vertex in $A$ forcing a vertex in $B$.  And, since the vertex $\hat a$ remains unfilled throughout this sequence of forces, each one is a strictly positive semidefinite force.
So now the fact that \eqref{eqn:longer initial subsequence of forces} is a sequence of more than $k$ such forces is a contradiction to the choice of
$F$ and $k$.
\end{proof}

\begin{cor}\th\label{cor:equality of four params for cobipartite}
If a graph $G$ is cobipartite, then $Z_+(G) =\hat Z(G) = Z(G)$.
\end{cor}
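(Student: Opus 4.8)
The plan is to derive \thref{cor:equality of four params for cobipartite} directly from \thref{thm:Z is Zplus for cobipartite} together with the chain of inequalities already available from \thref{treewidthdiagram}. The key observation is that the corollary asserts equality among three parameters, $Z_+(G)$, $\hat Z(G)$, and $Z(G)$, and we already know two facts that pin them together: the inequality $Z_+(G) \le \hat Z(G) \le Z(G)$ from \eqref{eqn:parameters from diagram inequality}, valid for all graphs, and the equality $Z_+(G) = Z(G)$ for cobipartite graphs supplied by \thref{thm:Z is Zplus for cobipartite}.

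The argument is then a short squeeze. First I would invoke \thref{treewidthdiagram}, which gives that for the cobipartite graph $G$ we have
\[
Z_+(G) \le \hat Z(G) \le Z(G).
\]
Next I would apply \thref{thm:Z is Zplus for cobipartite}, which tells us that $Z_+(G) = Z(G)$. Substituting this equality into the displayed chain forces $\hat Z(G)$ to be trapped between two equal quantities, so that $Z_+(G) = \hat Z(G) = Z(G)$, which is exactly the claim.

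There is essentially no obstacle here, as all the substantive work has already been carried out in \thref{thm:Z is Zplus for cobipartite}; this corollary is purely a matter of combining that theorem with the general inequality chain. The only point worth stating explicitly is that \thref{thm:Z is Zplus for cobipartite} requires no saturation hypothesis, so the conclusion likewise holds for every cobipartite graph without further restriction. I would present the proof as a single sentence or two, noting the inequalities from \thref{treewidthdiagram} and then collapsing them using the equality $Z_+(G) = Z(G)$.
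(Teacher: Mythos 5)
Your proof is correct and is essentially identical to the paper's own argument: both invoke the chain $Z_+(G) \le \hat Z(G) \le Z(G)$ from \thref{treewidthdiagram} and then collapse it using the equality $Z_+(G) = Z(G)$ from \thref{thm:Z is Zplus for cobipartite}. Nothing is missing, and your remark that no saturation hypothesis is needed is also consistent with the paper.
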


\begin{proof}
By \th\ref{treewidthdiagram}, we have $Z_+(G) \le \hat Z(G) \le Z(G)$.    
\th\ref{thm:Z is Zplus for cobipartite} then gives equality throughout.
\end{proof}

\begin{ex}\label{ex:15 vx graph inequality with Zhat}
As noted in \thref{ex:15 vx graph from 8x7 pattern}, 
the graph $G$ on $15$ vertices given in Figure \ref{fig:15 vertex graph with M neq Z} has $M(G) < Z(G)$ over every field.  \thref{cor:equality of four params for cobipartite} now gives the stronger inequality of $M(G) < \hat Z(G) = Z(G)$ over every field.  That is, the maximum nullity $M(G)$ is not only strictly less than the zero forcing number $Z(G)$, but is in fact less than the enhanced zero forcing number $\hat Z(G)$, and this gap persists over every field.

The smallest graph already known to display such a gap
is the extended pentasun graph shown in Figure \ref{fig:pentasun},
also with $15$ vertices.  Letting $P$ denote this graph, $M(P)=2$ holds over every field, as can be calculated using, e.g., \cite[Theorem 2.3]{BFH2004}, while \cite[Proposition 6.1]{jephian2016} gives $\hat Z(P)=Z(P)=3$.

Another related example is the Heawood graph, which we denote here by $H$.  
It happens that $H$ is bipartite, so that its complement $\overline H$ is cobipartite.
In fact, \cite[Observation A.21]{tree-width-JGT} gives $M(\overline H) = 10$, while $\hat Z(\overline H)=Z(\overline H)=11$, 
so that $\overline H$ displays the same gap observed above.
Moreover, $\overline H$ has only $14$ vertices.  But here the gap relies on properties of the real field; the discussion in \cite[Example 37]{deaett2020} together with \thref{thm:not necessarily symmetric min rank result} show that in fact
$M(\overline H)=11=\hat Z(\overline H)=Z(\overline H)$ over every infinite field of characteristic $2$.
\end{ex}

The following theorem extends the equivalence given by \thref{thm:equality triangle number and equality Z} to the enhanced zero forcing number, and, over the real numbers, to the positive semidefinite setting as well.

\begin{thm}\th\label{thm:bounds_equality_equivalence}
Let $\Y$ be an $m\times n$ zero-nonzero pattern and let $G$ be
the cobipartite graph associated with $\Y$.
Suppose $G$ is saturated with respect to the corresponding clique partition.
Then, over an infinite field, the following are equivalent.
\begin{enumerate}
    \item\th\label{cond:mr equals tri}
    $\mr(\Y)=\tri(\Y)$
    \item\th\label{cond:M equals Z}
    $M(G)=Z(G)$
    \item\th\label{cond:M equals Zhat}
    $M(G)=\hat Z(G)$
\end{enumerate}
In addition, over $\mathbb R$, the above statements are equivalent to $M_+(G)=Z_+(G)$.
\end{thm}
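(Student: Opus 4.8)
The plan is to assemble the full equivalence from the relationships already established between these parameters, so that no new combinatorial argument is required; each link in the chain is a direct appeal to a prior result. First I would record that the equivalence of (1) and (2) over an infinite field is exactly the content of \thref{thm:equality triangle number and equality Z}: that result gives $M(G)=Z(G) \Rightarrow \mr(\Y)=\tri(\Y)$ over any field, together with the converse over infinite fields, which is precisely the present setting. To bring in (3), I would invoke \thref{cor:equality of four params for cobipartite}, which yields $\hat Z(G)=Z(G)$ for every cobipartite graph; since $G$ is the cobipartite graph associated with $\Y$, this applies, and the statements $M(G)=Z(G)$ and $M(G)=\hat Z(G)$ become literally identical. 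Hence (2) $\Leftrightarrow$ (3) is immediate, completing the equivalence of (1), (2), and (3) over an infinite field.

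The remaining task is the positive semidefinite addendum over $\mathbb{R}$, which I would handle by a short chain of equalities linking $M_+(G)=Z_+(G)$ back to (1). Since $\mathbb{R}$ is infinite, \thref{thm:Z is Zplus for cobipartite} gives $Z_+(G)=Z(G)$, so that $M_+(G)=Z_+(G)$ holds if and only if $M_+(G)=Z(G)$. Then \thref{thm:triangle number versus ZF for cobipartite} rewrites $Z(G)=m+n-\tri(\Y)$ (using saturation), while the positive semidefinite half of \thref{thm:not necessarily symmetric min rank result}, valid over the reals, gives $M_+(G)=m+n-\mr(\Y)$. Substituting these two identities, the equation $M_+(G)=Z(G)$ becomes $\mr(\Y)=\tri(\Y)$, which is condition (1). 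Thus over $\mathbb{R}$ the statement $M_+(G)=Z_+(G)$ is equivalent to (1), and hence to all of (1), (2), and (3).

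Because every step is a direct citation of an earlier theorem, I do not expect a substantial combinatorial obstacle. The only real care is bookkeeping with fields: tracking which field each statement lives over and, in particular, ensuring that the relationship $M_+(G)=m+n-\mr(\Y)$ is applied only over $\mathbb{R}$, where it is stated, rather than over a general infinite field. The one point worth stating explicitly is that saturation of $G$ with respect to the clique partition is what licenses both \thref{thm:triangle number versus ZF for cobipartite} and the min-rank identities of \thref{thm:not necessarily symmetric min rank result}, whereas the equalities $Z_+(G)=\hat Z(G)=Z(G)$ require only cobipartiteness; keeping these hypotheses matched to the correct result at each link is the whole of the difficulty.
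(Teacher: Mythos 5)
Your proposal is correct and follows essentially the same route as the paper: (1)$\Leftrightarrow$(2) via \thref{thm:equality triangle number and equality Z}, (2)$\Leftrightarrow$(3) via \thref{cor:equality of four params for cobipartite}, and the positive semidefinite addendum via $Z_+(G)=Z(G)$ from \thref{thm:Z is Zplus for cobipartite} combined with \thref{thm:not necessarily symmetric min rank result}. The only (immaterial) differences are that the paper closes the last equivalence by noting $M_+(G)=M(G)$ and targeting condition (2), whereas you substitute the two rank formulas to target condition (1), and your phrase ``since $\mathbb{R}$ is infinite'' is unnecessary for invoking \thref{thm:Z is Zplus for cobipartite}, which is purely combinatorial and has no field hypothesis.
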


\begin{proof}
\th\ref{thm:equality triangle number and equality Z} states that \ref{cond:mr equals tri} and \ref{cond:M equals Z} are equivalent.
In addition, $Z(G) = \hat Z(G)$ by \th\ref{cor:equality of four params for cobipartite}, and hence \ref{cond:M equals Z} is equivalent to \ref{cond:M equals Zhat} as well.

Turning to the situation over $\mathbb R$, we have by \th\ref{thm:Z is Zplus for cobipartite} that $Z(G)=Z_+(G)$, and hence the
statement that $M_+(G)=Z_+(G)$ is equivalent to the statement that $M_+(G)=Z(G)$.  Since it follows from the second part of \th\ref{thm:not necessarily symmetric min rank result} that $M_+(G)=M(G)$, this in turn is equivalent to $M(G)=Z(G)$, which is \ref{cond:M equals Z}.
\end{proof}

\begin{figure}[ht]
    \centering
\begin{minipage}{2.1in}
\begin{tikzpicture}[thick, main node/.style={circle,minimum width=7pt,draw,inner sep=1pt}, filled node/.style={fill={lightgray},circle,minimum width=7pt,draw,inner sep=1pt}]
\def \sf {1.2}
\draw
    (0*\sf,1*\sf) node[main node] (3) {\scriptsize 3}
    (0*\sf,-1*\sf) node[main node] (4) {\scriptsize 4}
    (0.6*\sf,0*\sf) node[main node] (5) {\scriptsize 5}
    (1.56*\sf,0*\sf) node[main node] (6) {\scriptsize 6}
    (2.52*\sf,0*\sf) node[main node] (7) {\scriptsize 7}
    (-0.6*\sf,0*\sf) node[main node] (2) {\scriptsize 2}
    (-1.56*\sf,0*\sf) node[main node] (1) {\scriptsize 1}
    ;
\draw[color=gray,dashed] (-1.25,-1.25) -- (1.25*\sf,1.25*\sf);
\path
    (1) edge (2)
    	edge (3)
		edge (4)
    (2) edge (3)
    	edge (4)
        edge (5)
    (3) edge (4)
        edge (5)
        edge (6)
    (4) edge (6)
        edge (5)
    (5) edge (6)
    (7) edge (6)
        edge (4)
        edge[bend right=45] (5)
        ;
\end{tikzpicture}
\end{minipage}
\hspace{0.75in}
\begin{minipage}{2in}
       $\Y = \kbordermatrix{
        ~ & 4 & 5 & 6 & 7 \\
        1 & * & 0 & 0 & 0 \\
        2 & * & * & 0 & 0 \\
        3 & * & * & * & 0 \\
        }$
\end{minipage}
    \caption{The graph $G$ shown is the cobipartite graph associated with the pattern $\Y$, but is not saturated with respect to the corresponding clique partition (due to the column in $\Y$ with only $0$ entries).}
    \label{fig:unsaturated-graph}
\end{figure}
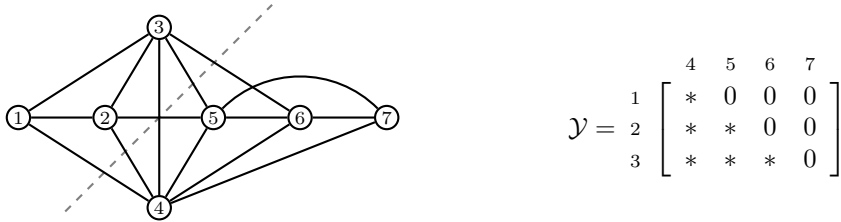

\begin{ex}\label{ex:bounds not saturated example}
Let $G$ be the graph shown in Figure \ref{fig:unsaturated-graph}. Then $G$ is the cobipartite graph associated with the pattern $\Y$ given there, but is not saturated with respect to the corresponding clique partition.
Since $G$ has a clique of size $4$, we have $h(G) \ge 4$.  Hence, we may apply
\th\ref{treewidthdiagram} to obtain
\[
3 \le h(G)-1 \le M(G) \le \hat Z(G) \le Z(G) \le 3,
\]
where the final inequality follows from verifying that $\{1,2,3\}$ is a zero forcing set.
In particular, then, $M(G)=Z(G)=3$, and hence $\mr(G)=7-M(G)=4$. Meanwhile, it is clear that $\mr(\Y)=\tri(\Y)=3$.
Hence, $\mr(\Y) < \mr(G)$, showing that
the condition that $G$ be saturated is essential to \th\ref{thm:not necessarily symmetric min rank result}.
\end{ex}

\subsection{Treewidth and cobipartite \texorpdfstring{\(k\)}{k}-trees}

Of the parameters mentioned in \thref{treewidthdiagram}, the only one not yet treated here is $\tw(G)$.  In particular, while the theorem gives $\tw(G) \leq Z_+(G) \le \hat Z(G) \le Z(G)$ for every graph $G$, we saw in \thref{cor:equality of four params for cobipartite} that when $G$ is cobipartite, $Z_+(G) = \hat Z(G) = Z(G)$.  Hence, it is natural to ask whether this equality can be extended to include $\tw(G)$.  We will show that this question has a negative answer for cobipartite graphs in general, but a positive one when $G$ happens also to be a $k$-tree, defined as follows.

\begin{defn}\th\label{def:k-tree}
For each integer $k \ge 1$, a graph is said to be a \defterm{$k$-tree} if it
can be constructed by starting with a $(k+1)$-clique and iteratively repeating
this step (at least zero times): Choose a set of $k$ vertices that induce a
clique and add a new vertex that is adjacent to exactly those $k$ vertices. A
\emph{linear  k-tree} is a \( k \)-tree that either is a \((k+1)\)-clique, or has exactly two vertices of degree
\( k \).
\end{defn}

\begin{ex}
The graph $G$ of \th\ref{ex:cobipartite with bounds}, shown in Figure \ref{fig:3-tree on 6 vertices}, is a $3$-tree, as it can be formed by starting with a clique with vertex set $\{1,2,3,4\}$, then adding vertex $5$ with neighborhood $\{2,3,4\}$, and finally adding vertex $6$ with neighborhood $\{3,4,5\}$.  Since vertices \( 1 \) and \( 6 \) are the only two vertices of degree \( 3 \), by definition \( G \) is a linear \( k \)-tree.
\end{ex}

\begin{defn}\th\label{def:treewidth}
Let $G$ be a graph.  Then the \defterm{treewidth} of $G$, denoted $\tw(G)$, is the smallest integer $k$ such that $G$ is a subgraph of some $k$-tree.
\end{defn}

\th\ref{cor:equality of four params for cobipartite} shows that the latter four parameters appearing in the inequality \eqref{eqn:parameters from diagram inequality} are equal whenever $G$ is cobipartite.  The next example shows that this result cannot be extended to include the treewidth.

\begin{ex}\label{twnotZ}
Let $H$ be the graph shown in Figure \ref{fig:inequality_with_tw}.  Then $H$ is
obtained from the graph $G$
shown in Figure \ref{fig:3-tree on 6 vertices}
by deleting the edge between $2$ and $5$.
Since $G$ is a $3$-tree, this shows that $\tw(H) \le 3$.  Since $\delta(H)=3$, we have by \thref{treewidthdiagram} that $\delta(H)=\tw(H)=3$.  Note that $H$ is still cobipartite, and saturated with respect to the same partition as $G$.  At the same time, 
removing that edge from $G$ decreases the triangle number of the corresponding zero-nonzero pattern to $2$, so that \th\ref{thm:triangle number versus ZF for cobipartite} gives $Z(H)=3+3-2=4$.
We can now apply \th\ref{cor:equality of four params for cobipartite} to obtain
\[
\delta(H) = \tw(H) = 3 < 4 = \hat Z(H) = Z_+(H) = Z(H).
\]

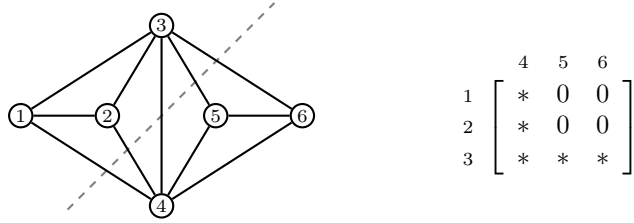
\begin{figure}
    \centering
\begin{minipage}{2in}
\begin{tikzpicture}[thick, main node/.style={circle,minimum width=7pt,draw,inner sep=1pt}, filled node/.style={fill={lightgray},circle,minimum width=7pt,draw,inner sep=1pt}]
\def \sf {1.2}
\draw
    (0*\sf,1*\sf) node[main node] (3) {\scriptsize 3}
    (0*\sf,-1*\sf) node[main node] (4) {\scriptsize 4}
    (0.6*\sf,0*\sf) node[main node] (5) {\scriptsize 5}
    (1.56*\sf,0*\sf) node[main node] (6) {\scriptsize 6}
    (-0.6*\sf,0*\sf) node[main node] (2) {\scriptsize 2}
    (-1.56*\sf,0*\sf) node[main node] (1) {\scriptsize 1}
    ;
\draw[color=gray,dashed] (-1.25,-1.25) -- (1.25*\sf,1.25*\sf);
\path
    (1) edge (2)
    	edge (3)
		edge (4)
    (2) edge (3)
    	edge (4)
    (3) edge (4)
        edge (5)
        edge (6)
    (4) edge (6)
        edge (5)
    (5) edge (6);
\end{tikzpicture}
\end{minipage}
\hspace{0.25in}
\begin{minipage}{2in}
       $\kbordermatrix{
        ~ & 4 & 5 & 6 \\
        1 & * & 0 & 0 \\
        2 & * & 0 & 0 \\
        3 & * & * & * \\
        }$
\end{minipage}
    \caption{The graph $H$ above is cobipartite  with $\tw(G) < Z(G)$.  It is the cobipartite graph associated with the zero-nonzero pattern shown, and is saturated with respect to the corresponding clique partition.}
    \th\label{fig:inequality_with_tw}
\end{figure}
\end{ex}

On the other hand, we will show in \th\ref{equality with treewidth for cobipartite k-trees} that
equality among all of the parameters appearing in \eqref{eqn:parameters from diagram inequality} in fact must occur for a cobipartite graph that is actually a $k$-tree.  For convenience in what follows, for the remainder of the section we let $k$ represent an arbitrary positive integer.

\begin{thm}[{\cite[Lemma 2.37]{tree-width-JGT}}]\th\label{z_linear_k_tree}
Let \( G \) be a linear \( k \)-tree. Then \( Z(G) = k \).
\end{thm}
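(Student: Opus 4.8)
The plan is to prove the two inequalities $Z(G) \ge k$ and $Z(G) \le k$ separately. The lower bound is essentially immediate: since $G$ is a $k$-tree, every vertex has degree at least $k$. This holds for the initial $(k+1)$-clique and is preserved at each construction step, because adding a vertex adjacent to a $k$-clique gives the new vertex degree exactly $k$ while only raising the degrees of existing vertices. Hence $\delta(G) \ge k$, and \thref{treewidthdiagram} then gives $k \le \delta(G) \le Z(G)$.

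For the upper bound I would first pin down the structure of a linear $k$-tree as a power of a path. Concretely, I would show that $G$ admits a vertex ordering $v_1,\ldots,v_n$ in which $v_i$ and $v_j$ are adjacent precisely when $0 < |i-j| \le k$. The clique case is just the natural ordering of the $(k+1)$-clique. For $n \ge k+2$ I would argue inductively. A vertex of degree $k$ in a $k$-tree is simplicial: when it is added in a construction order it is joined to a $k$-clique $C$, and its final degree being $k$ forces $N(v) = C$, a clique. Thus the two degree-$k$ vertices guaranteed by the definition are simplicial ``endpoints,'' and for $n \ge k+2$ they are non-adjacent. Deleting one of them, say $w$, leaves a $k$-tree $G - w$, and tracking degrees shows that $G - w$ is again a linear $k$-tree — either the $(k+1)$-clique in the boundary case, or a graph with exactly two degree-$k$ vertices (the surviving endpoint together with the unique neighbor of $w$ whose degree drops to $k$). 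The inductive hypothesis then supplies the ordering, into which $w$ slots as the new last vertex.

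With this ordering in hand I would take $F = \{v_1,\ldots,v_k\}$ as the candidate zero forcing set and run the left-to-right sweep $v_1 \to v_{k+1},\, v_2 \to v_{k+2},\,\ldots,\,v_i \to v_{i+k},\,\ldots$ At the step $v_i \to v_{i+k}$, the vertices $v_1,\ldots,v_{i+k-1}$ are already filled, and the neighbors of $v_i$ are exactly those $v_j$ with $0 < |i-j| \le k$; among these, $v_{i+k}$ is the only unfilled one, so the force is legal. This fills all of $G$, exhibiting a zero forcing set of size $k$ and giving $Z(G) \le k$, which combines with the lower bound to yield $Z(G) = k$.

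I expect the structural characterization to be the main obstacle: the sweep argument is routine once the path-power ordering is available, and the linear hypothesis is genuinely essential, since without it the analogous set $\{v_1,\ldots,v_k\}$ need not force at all — a star, which is a non-linear $1$-tree, has zero forcing number far exceeding $1$. The real care lies in the bookkeeping of the inductive step: verifying that deleting an endpoint preserves the ``exactly two vertices of degree $k$'' property and that the two endpoints are non-adjacent, so that removing one does not disturb the degree of the other. This is precisely where the definition of a linear $k$-tree does the work.
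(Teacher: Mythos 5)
The paper does not actually prove this statement; it quotes it from \cite[Lemma 2.37]{tree-width-JGT}, so your argument can only be judged on its own merits. Your lower bound is fine: every vertex of a $k$-tree has degree at least $k$, so $k \le \delta(G) \le Z(G)$ by \thref{treewidthdiagram}.

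The upper bound, however, rests on a structural claim that is false under the paper's definition (\thref{def:k-tree}: a linear $k$-tree is a $k$-tree that is a $(k+1)$-clique or has exactly two vertices of degree $k$). Not every such graph is the $k$-th power of a path. Take $k=2$: start with the triangle $\{1,2,3\}$ and add $4 \sim \{2,3\}$, then $5 \sim \{2,4\}$, then $6 \sim \{2,5\}$. This $2$-tree has exactly two vertices of degree $2$ (namely $1$ and $6$), so it is a linear $2$-tree, but vertex $2$ has degree $5$, while the maximum degree of $P_6^2$ is $4$; the two graphs are not isomorphic. The failure point is precisely the last sentence of your structural induction: deleting an endpoint $w$ does leave a linear $k$-tree $G - w$ (your degree bookkeeping is correct, and can be justified by \thref{lem:k-tree no two vxs of degree k} and \thref{lem:k-tree at least two vxs degree k}), but the clique $N(w)$ need not be the \emph{terminal} clique $\{v_{n-k},\ldots,v_{n-1}\}$ of the ordering supplied by the inductive hypothesis, so $w$ cannot in general be ``slotted in as the new last vertex.'' In the example, $G - 6 \cong P_5^2$ with ordering $1,3,2,4,5$, and one can check no path-power ordering of $G-6$ ends with the pair $\{2,5\}$ to which vertex $6$ attaches. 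Note that the theorem itself survives here: $\{1,3\}$ forces via $1\to 2$, $3\to 4$, $4\to 5$, $5\to 6$, so $Z=2$; but this forcing set is not ``the first $k$ vertices of a path-power ordering,'' and your sweep does not produce it. What you have genuinely proved is the theorem for the proper subclass of $k$-th powers of paths. A repair must argue on linear $k$-trees directly --- for instance, by strengthening the induction to produce, for $G-w$, a size-$k$ forcing set together with a forcing sequence that remains valid inside $G$ and fills $w$ last --- rather than passing through the false path-power characterization.
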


\begin{thm}\th\label{equality with treewidth for linear k-trees}
If $G$ is a linear $k$-tree, then
\(
    h(G)-1 = M(G) =  Z(G) = k.
\)
\end{thm}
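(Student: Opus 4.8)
The plan is to derive the result as a sandwich argument, using \thref{z_linear_k_tree} for the upper end and a clique-minor argument for the lower end. Since \thref{z_linear_k_tree} already gives $Z(G)=k$, and the chain \eqref{eqn:M and Z ineq} of \thref{treewidthdiagram} gives
\[
h(G)-1 \le M(G) \le Z(G),
\]
it suffices to establish the single lower bound $h(G)-1 \ge k$. Once this is in hand, the inequalities $k \le h(G)-1 \le M(G) \le Z(G) = k$ collapse, forcing $h(G)-1 = M(G) = Z(G) = k$.

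To prove $h(G)-1 \ge k$, that is, $h(G) \ge k+1$, I would note that $G$, being a linear $k$-tree, is in particular a $k$-tree, so the construction in \thref{def:k-tree} applies. That construction begins with a $(k+1)$-clique, and every subsequent step only adds vertices and edges; hence this initial $(k+1)$-clique persists as a subgraph of $G$. Thus $G$ contains $K_{k+1}$ as a subgraph, and therefore as a minor, so by the definition of the Hadwiger number $h(G) \ge k+1$.

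There is no substantive obstacle here: the combinatorial content of the statement is supplied entirely by the cited value $Z(G)=k$ from \thref{z_linear_k_tree} and by the standard inequalities of \thref{treewidthdiagram}, while the only fact requiring direct verification---that a $k$-tree contains a $(k+1)$-clique---is immediate from the defining construction. I note in passing that the same sandwich also pins down $M_+(G)=k$, via $h(G)-1 \le M_+(G) \le M(G)$, though this is not part of the stated conclusion.
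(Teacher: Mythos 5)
Your proposal is correct and follows essentially the same argument as the paper: both obtain $h(G)\ge k+1$ from the $(k+1)$-clique present in any $k$-tree by construction, then sandwich $k \le h(G)-1 \le M(G) \le Z(G) = k$ using \thref{treewidthdiagram} and \thref{z_linear_k_tree}. Your added remark that the same chain pins down $M_+(G)=k$ is a valid (if unstated in the paper) bonus.
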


\begin{proof}
Let $G$ be a linear $k$-tree.  Then, by definition, \( G \) contains a \( (k + 1)
\)-clique as a subgraph, so that \(  k + 1 \leq h(G)
\). Hence, using \th\ref{treewidthdiagram}, we have
\[
k \le h(G)-1 \le  M(G) \leq Z(G) = k,
\]
where the final equality follows from \th\ref{z_linear_k_tree}.
\end{proof}

The lemmas that follow will allow us to exploit the above results in the cobipartite case specifically.

\begin{lem}\thlabel{lem:k-tree no two vxs of degree k}
In a \(k\)-tree that is not a clique, no two vertices of degree \(k\) can be adjacent.
\end{lem}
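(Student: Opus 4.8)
The plan is to argue by induction on the number of vertices $n = |G|$, working directly with the recursive construction of a $k$-tree rather than analyzing an arbitrary $k$-tree from scratch. The only external fact I will need is that every vertex of a $k$-tree has degree at least $k$; this is immediate from the construction, since each vertex has degree exactly $k$ at the moment it is introduced (the $k+1$ vertices of the initial clique, and each subsequently added vertex), while adding later vertices only ever increases the degrees of existing ones. So I take $\delta(G) \ge k$ as given for every $k$-tree $G$.

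Since a non-clique $k$-tree has $n \ge k+2$ vertices, the base case is $n = k+2$. Here $G$ consists of a $(k+1)$-clique on $\{x_0,\dots,x_k\}$ together with one added vertex $z$ adjacent to a $k$-subset $K$, say $K = \{x_1,\dots,x_k\}$; the degree-$k$ vertices are then exactly $z$ and the single clique vertex $x_0$ omitted from $K$, and since $N(z)=K$ does not contain $x_0$, these two are non-adjacent. For the inductive step, I fix a construction sequence for $G$, let $z$ be the last vertex added, and write $G = G' + z$ where $G' = G - z$ is a $k$-tree on $n-1 \ge k+2$ vertices, hence itself a non-clique $k$-tree to which the inductive hypothesis applies, and where $z$ is adjacent to a $k$-clique $K \subseteq V(G')$. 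The key observation is a description of the degree-$k$ vertices of $G$: for $w \in V(G')$ one has $\deg_G(w) = \deg_{G'}(w)+1 \ge k+1$ when $w \in K$, and $\deg_G(w) = \deg_{G'}(w)$ otherwise; combined with $\deg_{G'}(w) \ge k$, this shows that the degree-$k$ vertices of $G$ are precisely $z$ together with those $w \notin K$ having $\deg_{G'}(w)=k$. In particular, no degree-$k$ vertex of $G$ other than $z$ can lie in $K$.

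With this in hand, suppose for contradiction that $u$ and $v$ are adjacent and both have degree $k$ in $G$. If one of them, say $u$, equals $z$, then $v \in N_G(z) = K$; but $v$ is a degree-$k$ vertex distinct from $z$, so $v \notin K$ by the observation above, a contradiction. Otherwise $u, v \ne z$, and since every edge of $G$ absent from $G'$ is incident to $z$, the edge $uv$ already lies in $G'$; moreover $u,v \notin K$ forces $\deg_{G'}(u)=\deg_{G'}(v)=k$. Thus $u$ and $v$ are adjacent degree-$k$ vertices of the non-clique $k$-tree $G'$, contradicting the inductive hypothesis and completing the induction.

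I expect the only real care to be bookkeeping: correctly tracking how the single construction step changes degrees, and verifying that a degree-$k$ vertex of $G$ cannot sit inside the attachment clique $K$. That fact is precisely what collapses the two cases above, and it lets me avoid any heavier structural machinery (such as simpliciality of minimum-degree vertices, or that deleting a simplicial vertex preserves being a $k$-tree), since peeling off the last-constructed vertex $z$ is already built into the induction.
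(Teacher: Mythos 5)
Your proof is correct, but it takes a genuinely different route from the paper's. The paper argues by direct contradiction: given two adjacent degree-\(k\) vertices in a non-clique \(k\)-tree \(G\), it assumes without loss of generality that one of them, say \(w\), was the last vertex added in the construction of \(G\); then \(G\setminus w\) is a \(k\)-tree in which the other vertex has degree \(k-1\), contradicting the fact that every vertex of a \(k\)-tree has degree at least \(k\). That argument is shorter than yours, but its WLOG step quietly relies on exactly the structural fact you deliberately avoid: that any degree-\(k\) vertex of a non-clique \(k\)-tree can serve as the last vertex in some construction ordering (equivalently, that deleting such a simplicial vertex leaves a \(k\)-tree). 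Your induction instead peels off whatever vertex \(z\) happens to be last in a fixed construction sequence, classifies the degree-\(k\) vertices of \(G\) as \(z\) together with the degree-\(k\) vertices of \(G' = G \setminus z\) lying outside the attachment clique \(K\), and then either gets an immediate contradiction (when one offending vertex is \(z\), since the other would have to lie in \(K\)) or pushes both offending vertices down into \(G'\) and invokes the inductive hypothesis. What the paper's approach buys is brevity; what yours buys is self-containedness, needing only the minimum-degree fact and the given construction sequence. The one small point worth spelling out in your write-up is why \(G'\), having at least \(k+2\) vertices, cannot be a clique (you need this to apply the inductive hypothesis): the last vertex added in any construction of \(G'\) has degree exactly \(k\), whereas every vertex of a clique on at least \(k+2\) vertices has degree at least \(k+1\).
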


\begin{proof}
Toward a contradiction, let \(G\) be a \(k\)-tree that is not a clique in which some two vertices of degree \(k\) are adjacent.  Since \(G\) is not a clique,
we may assume without loss of generality that in the construction of \(G\) as a \(k\)-tree, one of these was the last vertex to be added.  Call this vertex \(w\).  Let \(v\) be some other supposed vertex of degree \(k\) that is adjacent to \(w\).
Then \(G\setminus w\) is a \(k\)-tree in which \(v\) has degree \(k-1\).  But this is a contradiction, since every vertex of a \(k\)-tree has degree at least \(k\).
\end{proof}

\begin{lem}\thlabel{lem:k-tree at least two vxs degree k}
Every \(k\)-tree has at least two vertices of degree \(k\).
\end{lem}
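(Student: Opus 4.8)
The plan is to induct on the number of vertices, using the inductive construction in \thref{def:k-tree}. For the base case, a $(k+1)$-clique has all $k+1 \ge 2$ of its vertices of degree $k$, so the claim holds there. For the inductive step, any $k$-tree $G$ that is not a $(k+1)$-clique arises from a smaller $k$-tree $G'$ by adding a single vertex $w$ whose neighborhood is a $k$-clique $K$ in $G'$. The newly added vertex $w$ has degree exactly $k$ in $G$, which already supplies one vertex of the desired degree; the remaining work is to locate a second.

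I would next observe that passing from $G'$ to $G$ changes only the degrees of the vertices in $K$, each of which increases by exactly one, while every other vertex of $G'$ retains its degree. Hence it suffices to show that $G'$ has a vertex of degree $k$ lying outside $K$. Here I split into two cases. If $G'$ is itself a $(k+1)$-clique, then each of its $k+1$ vertices has degree $k$, but $K$ uses only $k$ of them, so one such vertex lies outside $K$. If instead $G'$ is not a clique, then by the inductive hypothesis $G'$ has at least two vertices of degree $k$, and by \thref{lem:k-tree no two vxs of degree k} these are pairwise non-adjacent; since $K$ is a clique, it can contain at most one of them, so again at least one vertex of degree $k$ lies outside $K$.

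In either case this vertex retains degree $k$ in $G$ and is distinct from $w$, giving the two required vertices of degree $k$. The only genuine subtlety is the counting in the inductive step, namely ensuring that a single attachment cannot simultaneously raise the degree of every degree-$k$ vertex of $G'$; this is exactly what the non-adjacency guaranteed by \thref{lem:k-tree no two vxs of degree k} rules out, and I expect it to be the crux of the argument. Everything else reduces to routine bookkeeping about how degrees change when one vertex is appended.
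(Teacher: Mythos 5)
Your proof is correct and takes essentially the same route as the paper: the paper runs the very same induction in minimal-counterexample form, deleting the last-added vertex and invoking \thref{lem:k-tree no two vxs of degree k} in exactly the way you do (the new vertex's neighborhood is a clique, so it can contain at most one of the smaller $k$-tree's degree-$k$ vertices). If anything, your write-up is slightly more careful, since you explicitly treat the case where the smaller $k$-tree is a $(k+1)$-clique, where \thref{lem:k-tree no two vxs of degree k} is not applicable, a case the paper's proof passes over silently.
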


\begin{proof}
Suppose this is false; let \(H\) be a minimal counterexample.
Since \(H\) is a \(k\)-tree, it must contain at least one vertex of degree \(k\).
Since \(H\) is a counterexample, it contains exactly one such vertex.  Clearly, then, \(H\) is not a clique.  Hence, we may consider the last vertex that was added in its construction as a \(k\)-clique; call this vertex \(z\).  
Then \(H\setminus z\) is a \(k\)-tree that,
by the minimality of \(H\), must have at least two vertices of degree \(k\).  Let \(x\) and \(y\) be two of these.  Since \(H\) is a counterexample, these no longer have degree \(k\) as vertices in \(H\), and so it must be that \(z\) is adjacent to both of them.  Since the neighborhood of \(z\) forms a clique, it follows that \(x\) and \(y\) are adjacent in \(H\setminus z\).  But \(H\setminus z\) is a \(k\)-tree, so this contradicts \thref{lem:k-tree no two vxs of degree k}.
\end{proof}

\begin{thm}\thlabel{thm:cobipartite k-tree is linear k-tree}
Every cobipartite \(k\)-tree is a linear \(k\)-tree.
\end{thm}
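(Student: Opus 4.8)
The plan is to argue by cases according to whether the cobipartite $k$-tree $G$ is a clique. If $G$ is a $(k+1)$-clique, then it is a linear $k$-tree directly by \thref{def:k-tree}, so I would dispose of this case in one line and assume for the remainder that $G$ is not a clique. This split matters because the decisive lemma below explicitly excludes cliques.

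The core of the argument is to show that a non-clique cobipartite $k$-tree has \emph{exactly} two vertices of degree $k$. By \thref{lem:k-tree at least two vxs degree k}, any $k$-tree already has at least two such vertices, so it remains only to prove the upper bound of two. I would establish this by contradiction: assume $G$ has three distinct vertices of degree $k$. Since $G$ is cobipartite, its vertex set partitions into two cliques $V_1$ and $V_2$; by the pigeonhole principle, at least two of the three degree-$k$ vertices lie in a common part of this partition, and since each part is a clique, those two vertices are adjacent.

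This adjacency is exactly what is forbidden: \thref{lem:k-tree no two vxs of degree k} asserts that in a $k$-tree that is not a clique, no two vertices of degree $k$ can be adjacent. The contradiction shows there are at most two vertices of degree $k$. Combining this with the lower bound from \thref{lem:k-tree at least two vxs degree k} gives precisely two vertices of degree $k$, which is the defining condition (together with $G$ not being a $(k+1)$-clique) for $G$ to be a linear $k$-tree.

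There is no substantive obstacle here once the two preparatory lemmas are available; the argument reduces to a clean pigeonhole step. The only point requiring care is to treat the clique case separately at the outset, since \thref{lem:k-tree no two vxs of degree k} fails for a $(k+1)$-clique (where all vertices have degree $k$ and are mutually adjacent), and that case is instead covered directly by the definition of a linear $k$-tree.
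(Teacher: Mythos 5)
Your proof is correct and follows essentially the same route as the paper: both handle the clique case by definition and then combine \thref{lem:k-tree at least two vxs degree k} with \thref{lem:k-tree no two vxs of degree k} to pin down exactly two vertices of degree \(k\). Your pigeonhole-by-contradiction step is just a repackaging of the paper's direct observation that each clique in the partition can contain at most one vertex of degree \(k\).
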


\begin{proof}
Let \(G\) be a \(k\)-tree that is cobipartite, say with clique partition \( (V_1, V_2) \). If \( G \) is itself a clique, then \( G \) is a linear \( k
\)-tree by definition. Hence, suppose \( G \) is not a clique. Then it suffices to show that
\(G\) has exactly two vertices of degree \(k\). We have by
\th\ref{lem:k-tree at least two vxs degree k} that \( G \) contains at least two
vertices of degree \( k \).
Since \(V_1\) and \(V_2\) are both cliques, it follows from
\th\ref{lem:k-tree no two vxs of degree k} that each contains at most one vertex of degree \( k \). Therefore, \( V_1 \) and \( V_2 \) each contains exactly one
vertex of degree \( k \).  So \(G\) contains exactly two such vertices, as desired.
\end{proof}

While Example \ref{twnotZ} shows that
a cobipartite graph $G$ may exhibit a gap such that $\tw(G) < Z(G)$,
combining \thref{thm:cobipartite k-tree is linear k-tree,equality with treewidth for linear k-trees}
shows that this cannot occur for a cobipartite graph that is a $k$-tree.

\begin{cor}\th\label{equality with treewidth for cobipartite k-trees}
Let $G$ be a cobipartite $k$-tree.  Then
\(
    h(G) - 1 =  M(G) = Z(G) = k.
\)
\end{cor}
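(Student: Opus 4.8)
The plan is to recognize that this corollary is a direct combination of the two immediately preceding results, so that no new argument is needed beyond invoking them in the right order. The hypothesis places $G$ simultaneously in two classes: the cobipartite graphs and the $k$-trees. The entire strategy is to use the structural classification that the intersection of these two classes is no larger than the linear $k$-trees, and then to apply the parameter equality already established for linear $k$-trees.

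First I would apply \thref{thm:cobipartite k-tree is linear k-trees} to conclude that $G$, being a cobipartite $k$-tree, is in fact a linear $k$-tree. This is the step that carries the real content, but it has already been proven; its proof rests on the two degree lemmas (\thref{lem:k-tree no two vxs of degree k} and \thref{lem:k-tree at least two vxs degree k}), which together force a non-clique cobipartite $k$-tree to have exactly one vertex of degree $k$ in each of its two cliques, hence exactly two such vertices overall, matching the definition of a linear $k$-tree.

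Having identified $G$ as a linear $k$-tree, I would then invoke \thref{equality with treewidth for linear k-trees} verbatim, which gives $h(G)-1 = M(G) = Z(G) = k$, completing the proof. The chain of inequalities $k \le h(G)-1 \le M(G) \le Z(G) = k$ underlying that theorem comes from the presence of a $(k+1)$-clique (bounding $h(G)$ below) together with \thref{z_linear_k_tree} (giving $Z(G)=k$) and the general bounds of \thref{treewidthdiagram}.

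Since all the difficulty has been absorbed into the cited results, there is essentially no remaining obstacle at this stage: the corollary follows in a single line once the two theorems are chained together. The only thing to be careful about is simply ensuring the two referenced statements are cited in the correct logical order, first to reclassify $G$ as linear and then to read off the equalities.
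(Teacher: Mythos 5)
Your proposal is correct and is essentially identical to the paper's own argument: the paper derives this corollary precisely by combining \thref{thm:cobipartite k-tree is linear k-tree} with \thref{equality with treewidth for linear k-trees}, exactly the two-step chain you describe. No further comment is needed.
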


For a general graph $G$, almost all of the combinatorial parameters included in \cite[Figure
1.1]{tree-width-JGT} are shown there to be between a lower bound of $h(G)-1$ and an upper bound of $Z(G)$.  Since any graph with a clique of size \( Z(G) +1 \) satisfies $Z(G) \le h(G)-1$,  
all of those parameters are equal, and in particular
\[
h(G) - 1 = \tw(G) = M_+(G) = M(G) = Z_+(G) = \hat Z(G) = Z(G)
\]
for every such graph $G$.

\section{Unsaturated cobipartite graphs}\label{sec:unsaturated}

Many of our results so far have focused on the case of a cobipartite graph that is saturated
with respect to some partition of its vertices into two cliques.
When a cobipartite graph
does not satisfy this condition,
we refer to it as an \emph{unsaturated cobipartite graph}.
In this section, we turn our attention to such graphs.

The condition of being saturated means (\thref{def:saturated}) that each vertex in each clique is adjacent to some vertex in the
other clique.
Hence, to understand the zero forcing number of unsaturated cobipartite graphs, we need to account for the way in which the zero forcing number is
affected by the presence of vertices that violate this condition. Since a
saturated cobipartite graph can always be obtained by deleting all such
vertices, we examine what effect this deletion has
on the zero forcing number.  For this, we wish to employ some useful results
from \cite{MR2917414}; the following definitions make it easier to do so.

\begin{defn}\thlabel{def:source-and-terminal}
Let $k \ge 1$ be an integer and let $(v_0,v_1,\ldots,v_k)$ be a forcing chain.  Then $v_0$ is the \defterm{source} vertex of the forcing chain and $v_k$ is its \defterm{terminal} vertex.
\end{defn}

Note that the definition of source vertex and terminal vertex requires that the chain consist of more than one vertex; by definition, a forcing chain of length $0$ has neither a source vertex nor a terminal vertex.

\begin{defn}\label{defn:critical and uncritical}
A vertex is said to be \defterm{critical} if there exists some optimal forcing sequence in which it does not force or get forced.  It is said to be \defterm{uncritical} if, for all optimal forcing sequences,
it is either the terminal or the source vertex of the chain in which it resides.
\end{defn}

To say that a vertex $v$ is critical is the same as saying that there exists a set of forcing chains, from some optimal zero forcing set $F$, such that the chain containing $v$ contains no other vertices. (Note how this implies that $v \in F$.)
According to \th\ref{def:source-and-terminal}, in this situation $v$ is considered to be neither a source vertex nor a terminal vertex, and hence is not uncritical.
Therefore, a critical vertex cannot be uncritical.  (A vertex can, on the other hand, be neither critical nor uncritical.)
Meanwhile, considering reversals shows that an uncritical vertex must be both a terminal vertex for some optimal forcing sequence and a source vertex for some other such sequence.

The main role for the notions of critical and uncritical vertices
is in accounting for the effect that removing a vertex has on the zero forcing number.

\begin{thm}[{\cite[Theorem 2.7]{MR2917414}}]
\th\label{thm:critical}
A vertex $z$ of $G$
is critical if and only if 
\( Z(G \setminus z) =
Z(G) - 1 \).
\end{thm}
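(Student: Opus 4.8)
The plan is to reduce the biconditional to a single inequality and then dispatch both directions using one structural observation: namely, that a vertex which is initially filled and never forces stays filled throughout, so deleting it (together with its incident edges) has no effect on the forceability of any force carried out by any other vertex. First I would record the bound $Z(G\setminus z)\ge Z(G)-1$, which holds with no hypothesis on $z$. Given any zero forcing set $F'$ for $G\setminus z$, I claim $F'\cup\{z\}$ is a zero forcing set for $G$: since $z$ is filled from the outset, at every stage the set of unfilled neighbors of any vertex $v\neq z$ is the same in $G$ as in $G\setminus z$, so each force available in $G\setminus z$ remains available in $G$. Replicating a complete forcing sequence for $G\setminus z$ thus fills all of $V(G)\setminus\{z\}$, while $z$ is already filled. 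This yields $Z(G)\le Z(G\setminus z)+1$, so the asserted equality is equivalent to the single inequality $Z(G\setminus z)\le Z(G)-1$.

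For the forward direction I would assume $z$ is critical. As noted in the discussion preceding the statement, this means there is an optimal zero forcing set $F$ with $z\in F$ together with a complete forcing sequence from $F$ in which $z$ lies in a chain of length $0$, so $z$ neither forces nor is forced. I would then take $F\setminus\{z\}$ as a candidate zero forcing set for $G\setminus z$. Every force in the original sequence is performed by some $v\neq z$ and forces some $u\neq z$, and by the observation above each such force remains valid after $z$ and its edges are deleted; hence the same sequence of forces fills all of $V(G)\setminus\{z\}$. Since $z\in F$ and $|F|=Z(G)$, the set $F\setminus\{z\}$ is a zero forcing set for $G\setminus z$ of size $Z(G)-1$, giving $Z(G\setminus z)\le Z(G)-1$.

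For the backward direction I would assume $Z(G\setminus z)=Z(G)-1$ and let $F'$ be an optimal zero forcing set for $G\setminus z$. By the construction in the first paragraph, $F'\cup\{z\}$ is a zero forcing set for $G$; as $z\notin F'$ its size is $Z(G)$, so it is optimal. Running the replicated complete forcing sequence from $F'$ fills $V(G)\setminus\{z\}$ while $z$ remains filled and never forces, and once all of $V(G)$ is filled no further force is possible. This exhibits a complete forcing sequence from an optimal zero forcing set in which $z$ lies in a length-$0$ chain, so $z$ is critical.

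The step I would state most carefully—and which I regard as the only real obstacle—is the claim that deleting $z$ does not change the unfilled neighborhood of any other vertex, and hence does not change which forces are possible. This is immediate once one observes that $z$ is filled at every stage of the relevant sequences, but it is the hinge on which both implications turn, so it deserves to be isolated rather than buried in the case analysis.
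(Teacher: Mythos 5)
The paper does not contain its own proof of this statement: it is imported directly from \cite[Theorem 2.7]{MR2917414}, so there is no internal argument to compare yours against. Judged on its own merits, your proof is correct and self-contained. The reduction of the biconditional to the single inequality $Z(G\setminus z)\le Z(G)-1$, via the unconditional bound $Z(G\setminus z)\ge Z(G)-1$, is legitimate, and the invariant you isolate is exactly the right one: if the filled sets in $G$ and $G\setminus z$ differ precisely by the vertex $z$, and $z$ is filled throughout, then every vertex $v\neq z$ has the same set of unfilled neighbors in both graphs, so the available forces coincide. This single observation does make both transfers work: adjoining $z$ to a zero forcing set of $G\setminus z$ as a permanently idle filled vertex (giving both the general lower bound and, in the backward direction, an optimal set of $G$ together with a complete forcing sequence in which $z$ lies in a chain of length $0$, which is precisely the paper's definition of critical), and, in the forward direction, deleting $z$ from an optimal set of $G$ when criticality guarantees $z$ is initially filled and never forces or gets forced. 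The only points needing care --- that the replicated sequences really are complete (they end with every vertex filled, so no force is possible), and that replication is justified step by step by the invariant --- are both handled. This is essentially the standard ``vertex spread'' argument from the literature where the result originates.
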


\begin{prop}
\th\label{prop:uncritical}
Let \( z \) be an uncritical vertex in \( G \). Then \( Z(G \setminus z) =
Z(G)\).
\end{prop}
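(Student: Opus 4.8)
The plan is to establish the two inequalities $Z(G\setminus z)\le Z(G)$ and $Z(G\setminus z)\ge Z(G)$ separately and then combine them. For the upper bound I would use uncriticality directly, deleting $z$ from an optimal forcing sequence in which it is a terminal vertex. For the lower bound I would combine \th\ref{thm:critical} with the elementary fact that deleting a single vertex lowers the zero forcing number by at most one.

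For the upper bound, recall the observation above that an uncritical vertex is the terminal vertex of its chain for some optimal forcing sequence. So fix an optimal zero forcing set $F$ together with a complete forcing sequence in which $z$ is the terminal vertex of its chain (\thref{def:source-and-terminal}); in particular $z$ is forced, so $z\notin F$ and hence $F\subseteq V(G\setminus z)$. I would then argue that $F$ is still a zero forcing set for $G\setminus z$ by running the same sequence while simply omitting the single force that fills $z$. The point to verify is that every other force survives the deletion of $z$. This holds because $z$ is terminal and so never forces; and whenever some other vertex $w$ forces its unique unfilled neighbor $u$, the vertex $z$ must be either non-adjacent to $w$ or already filled at that moment, since otherwise $z$ would be a second unfilled neighbor of $w$, forcing $z=u$ and contradicting that $z$ is forced only once. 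Hence deleting $z$ changes no forcing vertex's set of unfilled neighbors at the relevant moment, so all of $V(G\setminus z)$ still becomes filled and $Z(G\setminus z)\le|F|=Z(G)$.

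For the lower bound, I would first record the general inequality $Z(G)\le Z(G\setminus z)+1$, valid for every vertex $z$: given an optimal zero forcing set $F'$ of $G\setminus z$, the set $F'\cup\{z\}$ forces all of $G$, because the only new neighbor a forcing vertex can acquire is the permanently filled vertex $z$, which is never an unfilled neighbor, so each force of $G\setminus z$ remains legal and $z$ itself is filled from the start. Now, since $z$ is uncritical it is not critical, so \th\ref{thm:critical} gives $Z(G\setminus z)\neq Z(G)-1$. Combined with $Z(G\setminus z)\ge Z(G)-1$ from the general inequality, this yields $Z(G\setminus z)\ge Z(G)$, and together with the upper bound we conclude $Z(G\setminus z)=Z(G)$.

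The step I expect to be the main obstacle is the bookkeeping in the upper bound: one must confirm that omitting the force that fills $z$, and removing $z$ from the graph, leaves the set of filled vertices at every stage equal to its counterpart in $G$ with $z$ deleted, so that each surviving force is still legal. The argument above explains why no force other than the one filling $z$ can be disrupted, but this is precisely where the definition of a terminal vertex and the fact that each vertex is forced at most once must be used most carefully.
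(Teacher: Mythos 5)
Your proof is correct. The upper bound is exactly the paper's argument: uncriticality yields an optimal forcing sequence in which $z$ is terminal, and running that same sequence in $G\setminus z$ with the single force onto $z$ omitted shows $Z(G\setminus z)\le Z(G)$; your legality check is sound (deleting $z$ can only shrink a vertex's set of unfilled neighbors, so every force with target other than $z$ survives). Where you differ is the lower bound. The paper argues by contradiction and is self-contained: if $Z(G\setminus z)<Z(G)$, then for an optimal zero forcing set $F'$ of $G\setminus z$, the set $F'\cup\{z\}$ is an \emph{optimal} zero forcing set for $G$ from which $z$ neither forces nor gets forced, so $z$ is critical, contradicting uncriticality (the two notions being mutually exclusive, as observed just before the proposition). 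You instead combine the general inequality $Z(G\setminus z)\ge Z(G)-1$ (proved via the same $F'\cup\{z\}$ construction) with the contrapositive of \thref{thm:critical}: since $z$ is uncritical it is not critical, hence $Z(G\setminus z)\neq Z(G)-1$. The ingredients are the same in both versions, but you delegate to the cited \thref{thm:critical} precisely the step the paper re-derives inline, namely the direction asserting that $Z(G\setminus z)=Z(G)-1$ forces $z$ to be critical. Your version buys brevity and modularity, since \thref{thm:critical} is stated immediately before the proposition anyway; the paper's version buys independence from that imported theorem, proving the proposition directly from the definitions.
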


\begin{proof}
Since \( z \) is an uncritical vertex, it is a terminal vertex with respect to a complete forcing sequence from
some optimal zero forcing set \( F \). Hence, \( F \) is also a zero forcing set
for \( G \setminus z\), where the forces occur as in \( G \). Thus, \(
Z(G) \geq Z(G \setminus z) \).  
Suppose for the sake of contradiction that
\( Z(G) > Z(G \setminus z) \). Let \( F' \) be an optimal zero
forcing set for \( G \setminus z \). It follows that \( F' \cup  \{ z \} \) is an optimal zero forcing set for
\( G \) from which there is some forcing sequence of which \( z \) is a critical vertex. But this is impossible, since \( z \) is an
uncritical vertex.
\end{proof}

Note that \cite[Example 2.9]{MR2917414} shows that the converse of \th\ref{prop:uncritical} is false.

The following theorems give us tools to identify when a vertex that is contained in a clique of a graph is critical or uncritical.

\begin{thm}
\th\label{thm:criticaloruncritical}
Let \( G \) be a graph, let \( T \) be a clique in \( G \), and let \( z \) be a vertex of \( T \)
with \( \deg(z) = |T| - 1 \). Then \( z \) is either a
critical vertex or an uncritical vertex, and it follows that \( Z(G \setminus z ) \in \{
Z(G), Z(G) - 1 \} \).
\end{thm}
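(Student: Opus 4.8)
The plan is to establish the dichotomy that $z$ is either critical or uncritical; once that is in hand, the claimed conclusion $Z(G\setminus z)\in\{Z(G),Z(G)-1\}$ follows immediately, since \thref{thm:critical} gives $Z(G\setminus z)=Z(G)-1$ when $z$ is critical, while \thref{prop:uncritical} gives $Z(G\setminus z)=Z(G)$ when $z$ is uncritical. So essentially all of the content lies in proving the dichotomy.

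The key structural observation I would exploit is that the degree hypothesis $\deg(z)=|T|-1$, together with $z$ being a vertex of the clique $T$, forces $N(z)=T\setminus\{z\}$. Thus every neighbor of $z$ lies in the clique $T$, and so any two distinct neighbors of $z$ are themselves adjacent. Using this, I would first prove the following claim, valid for \emph{any} forcing sequence (not merely optimal ones): the vertex $z$ can never be an internal vertex of a forcing chain; that is, $z$ cannot both force and get forced.

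To prove the claim, I would argue by contradiction, supposing $z$ lies in a chain of the form $\ldots \to u \to z \to w \to \ldots$, so that $u$ forces $z$ and, at some later point, $z$ forces $w$. Both $u$ and $w$ are neighbors of $z$, hence both lie in $T\setminus\{z\}$, and since $u\neq w$ they are adjacent, being distinct vertices of the clique $T$. At the moment the force $u\to z$ occurs, the zero forcing rule requires $z$ to be the unique unfilled neighbor of $u$; since $w$ is a neighbor of $u$ distinct from $z$, it follows that $w$ is already filled at that moment. But the force $z\to w$ happens strictly later and requires $w$ to be unfilled, which is impossible once $w$ has been filled. This contradiction establishes the claim.

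With the claim in hand, I would finish by a case split on whether $z$ ever occupies a chain of length $0$. If some optimal forcing sequence places $z$ in a length-$0$ chain, then by definition $z$ neither forces nor gets forced in that sequence, so $z$ is critical. Otherwise, in every optimal forcing sequence $z$ lies in a chain of length at least $1$, and since by the claim $z$ is never internal, $z$ must be the source or terminal vertex of its chain in every such sequence (see \thref{def:source-and-terminal}); this is exactly the definition of $z$ being uncritical. The main obstacle is the subtlety that \emph{critical} and \emph{uncritical} are not literal negations of one another---a vertex can in principle be neither---so the argument must genuinely eliminate the internal-vertex case, which is the only way a vertex could be neither, rather than merely negating one condition. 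The clique structure encoded by the degree hypothesis is precisely what rules this case out.
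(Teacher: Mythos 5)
Your proof is correct and takes essentially the same approach as the paper: both hinge on the observation that $\deg(z)=|T|-1$ places every neighbor of $z$ inside the clique $T$, so any two such neighbors are adjacent and hence $z$ can never be an internal vertex of a forcing chain, after which the dichotomy and the conclusion via \thref{thm:critical} and \thref{prop:uncritical} follow. Your write-up is, if anything, slightly more explicit than the paper's in justifying why failing both definitions would force $z$ to be internal to a chain in some optimal forcing sequence.
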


\begin{proof}
Suppose \( z \) is not a critical vertex and not an uncritical vertex. Then
there exist vertices 
\(u\) and \(v\) such that \( u \) forces \( z \) and \(
z \) forces \( v \) in some optimal forcing sequence. 
But since \( \deg(z) = |T| - 1 \), only vertices in \(T\) are adjacent to \(z\).  Hence, \(u\) and \(v\) are both in \(T\), and therefore are adjacent, which contradicts that \( u \) has exactly one unfilled neighbor when it forces
\( z \).
Hence, \(z\) must be either critical or uncritical, and the claim that \( Z(G -
z ) \in \{ Z(G), Z(G) - 1 \} \) follows from
\th\ref{thm:critical,prop:uncritical}.
\end{proof}

\begin{prop}
\th\label{cor:deg_critical}
Let \( G \) be a
graph, let \( T \) be a clique in \( G \), and let \( z \) be a vertex of \( T \) with $\deg(z)=|T|-1$.  Suppose that \( z \) is not the only vertex in
\( T \) of degree \(|T| - 1 \), that
some vertex in $T$ has a neighbor outside of $T$,
and that for each vertex in $T$, its set of neighbors outside of $T$ is either empty or forms a clique.
Then \( z \) is a
critical vertex.
\end{prop}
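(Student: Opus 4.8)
The plan is to combine the dichotomy of \thref{thm:criticaloruncritical} with the rigidity of forcing inside a clique. First I would record the consequences of the degree hypotheses: since $\deg(z)=|T|-1$ and $T$ is a clique containing $z$, the neighbors of $z$ are exactly $T\setminus\{z\}$ and $z$ has no neighbor outside $T$, and the same holds for the second degree-$(|T|-1)$ vertex $z'$. Thus $N[z]=N[z']=T$, so $z$ and $z'$ are true twins, and the hypothesis that some vertex of $T$ has an external neighbor forces that vertex (call it $w$) to lie in $T\setminus\{z,z'\}$; in particular $|T|\ge 3$. Recalling that $z$ is critical exactly when some optimal forcing sequence leaves $z$ in its own chain of length $0$, I would invoke \thref{thm:criticaloruncritical} and assume for contradiction that $z$ is uncritical, i.e.\ that in every optimal forcing sequence $z$ is the source or terminal vertex of a nontrivial chain; the goal is then to exhibit one optimal sequence in which $z$ neither forces nor is forced.

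The key structural observation is that at most one \emph{internal} force (one whose forcer and forcee both lie in $T$) can occur in any forcing sequence, since immediately after such a force every vertex of $T$ is filled. As $z$ and $z'$ have all of their neighbors in $T$, every force involving either of them is internal; hence whenever $z$ forces or is forced, that event is the unique internal force. If this force pairs $z$ with a vertex $q\neq z'$ (either $z\to q$ or $q\to z$), I would isolate $z$ using the twin $z'$: in the first case $z'$ is already filled and has $q$ as its unique unfilled neighbor at that moment, so reassigning the force to $z'\to q$ leaves $z$ idle; in the second case $z'$ lies in the forcing set and is itself idle, so the relabeling exchanging $z$ and $z'$ (a graph automorphism, since they are twins) yields an optimal sequence in which $z$ is idle. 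Either way $z$ is isolable, contradicting uncriticality, and these cases use only the twin structure.

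The remaining, and genuinely harder, case is when the unique internal force pairs the two twins, say $z\to z'$ (the case $z'\to z$ being symmetric after pre-filling the forcee). Here I would pass to $\hat F=F\cup\{z'\}$: prefilling $z'$ removes the need for $z$ to force it, so the sequence $\sigma$ with that one force deleted is a complete forcing sequence from $\hat F$ in which $z$ is idle, but now $|\hat F|=Z(G)+1$. The crux is to delete one vertex of $\hat F$ other than $z$ and still force all of $G$ with $z$ idle. The idea is that the now-prefilled twin $z'$ can force a vertex $w\in T\setminus\{z,z'\}$ carrying external neighbors, after which the hypothesis that $N_{\mathrm{ext}}(w)$ is a clique lets the forcing propagate through that neighborhood along its internal edges; this propagation is precisely what allows one originally filled vertex to be dropped while still completing the forcing. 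Making the choice of the dropped vertex explicit and verifying that the resulting set is an optimal zero forcing set from which $z$ remains idle is the main obstacle. The clique hypothesis is essential here: the graph obtained from a triangle by attaching three pairwise nonadjacent pendants at a single vertex satisfies all of the other hypotheses, yet there $z$ is uncritical, because $w$ can force only one pendant and the absence of edges among the pendants blocks any propagation.

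Finally, once $z$ has been isolated in some optimal forcing sequence we contradict the standing assumption that $z$ is uncritical, so the dichotomy of \thref{thm:criticaloruncritical} forces $z$ to be critical, which is the claim; by \thref{thm:critical} this is equivalent to $Z(G\setminus z)=Z(G)-1$.
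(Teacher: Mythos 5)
Your reduction to a single \emph{internal} force (one with both endpoints in $T$), and your handling of the cases where that force pairs $z$ with a non-twin $q$, are correct: reassigning $z\to q$ to $z'\to q$, or applying the twin-swap automorphism when $q\to z$, each yields an optimal sequence in which $z$ is idle, contradicting uncriticality. This parallels the paper's argument, which organizes the same content by first applying \thref{thm:criticaloruncritical} to the twin $v=z'$ and casing on whether $v$ is critical or uncritical. Your closing example (a triangle with three independent pendants at one vertex) is also a correct witness that the external-clique hypothesis cannot be dropped: there $Z(G)=Z(G\setminus z)=3$, so $z$ is not critical by \thref{thm:critical}, hence uncritical.

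The genuine gap is the case you yourself flag as ``the main obstacle,'' when the internal force is $z\to z'$ (or its reverse); this is exactly the case where the two remaining hypotheses must do their work, and your sketch for it fails as stated. Prefilling $z'$ and having it force some $w\in T\setminus\{z,z'\}$ requires all of $T\setminus\{z',w\}$ to be filled at that moment, which need not hold; and the external-clique hypothesis cannot make forcing ``propagate through'' $N_{\mathrm{ext}}(w)$, since each vertex forces at most once. The paper completes this case by splitting on the initial state of $T$. (i) If $z'$ is the only initially unfilled vertex of $T$, pick any $x\in T\setminus\{z,z'\}$ (it exists by the external-neighbor hypothesis and lies in $F$), replace $F$ by $(F\setminus\{x\})\cup\{z'\}$, and let $z'\to x$ be the first force, after which the original sequence runs with $z\to z'$ deleted. (ii) Otherwise some $y\in T\setminus\{z,z'\}$ is initially unfilled; its forcer must lie outside $T$ (an internal forcer of $y$ would need $z'$, still unfilled at that point, to be filled), and since $y$'s external neighbors form a clique containing that forcer, all of them are filled the instant $y$ is forced; hence when $z$ is about to force $z'$, the vertex $y$ has $z'$ as its unique unfilled neighbor and can perform that force instead. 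In both cases $z$ is idle in an optimal sequence, hence critical. So your strategy is sound, but the decisive step---identifying the substitute forcer, either $z'$ after an exchange or the initially unfilled $y$---is precisely what is missing, and the mechanism you propose in its place does not supply it.
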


\begin{proof}
By hypothesis, there exists some $v\in T$ with $v\neq z$ and $\deg(v)=|T|-1$.
Suppose for the sake of contradiction that $z$ is not critical.
By \th\ref{thm:criticaloruncritical}, each of $z$ and $v$ must be critical or uncritical.
So $z$ is uncritical.

Case 1: $v$ is critical.  Since $z$ is uncritical, by definition $z$ is a terminal or source vertex for every optimal forcing sequence.  Since $v$ is critical, there exists some optimal forcing sequence in which $v$ neither forces nor gets forced; in particular, with this sequence, $v$ is not a terminal or source vertex.  But since $v$ and $z$ are adjacent twins, we can swap their roles in the forcing process to obtain an optimal forcing sequence such that $z$ is not a terminal or source vertex, contradicting the above.

Case 2: $v$ is uncritical.  Recall that so is $z$.  This means that for any optimal forcing sequence, each one is either a source or terminal vertex.
Moreover, since $v$ and $z$ are adjacent twins, we claim that they cannot both be source vertices.
To see this, suppose otherwise, and assume without loss of generality that $v$ forces before $z$.  The contradiction is that then $v$ would at that point have two unfilled neighbors, both the one that it is assumed to force, and also the one that $z$ is supposed to force later on.
By considering reversals, $v$ and $z$ cannot both be terminal vertices either.

Hence, one of $v$ and $z$ is source and the other is terminal.  Without loss of generality, say $v$ is terminal and $z$ is source.  Suppose first that $z$ forces some $y \neq v$.  Then, 
since $v$ is a neighbor of $z$, 
it must be that $v$ is forced by some vertex $x$ before this occurs.  By the choice of $z$ and $v$, we have $x,y\in T$, so that $x$ and $y$ are adjacent.  But then, at the point when $x$ is to force $v$, the vertices $v$ and $y$ are distinct unfilled neighbors of $x$, which is a contradiction.

Finally, suppose $z$ forces $v$.
By hypothesis, some vertex in $T$ has a neighbor outside of $T$. So there exists an $x\in T$ other than $z$ and $v$.  
If $v$ were the only initially unfilled vertex in $T$, then we could take $(F\setminus\{x\})\cup\{v\}$ as the initially filled set and let $v$ force $x$ as the initial force, with the subsequent forces proceeding as before.
But then $z$ would neither force nor get forced, and hence would be critical, contradicting that $z$ is uncritical.

Hence, some vertex $y\in T$ with $y\neq v$ is initially unfilled as well.  Since $y$ is a neighbor of $z$, it must be that $y$ is forced before the point at which $z$ is to force $v$, requiring that a vertex outside of $T$ forces $y$.
By assumption, the neighbors of $y$ outside of $T$ form a clique.  It follows that once $y$ is forced, so are all of its neighbors outside of $T$.  Therefore, at the point at which $z$ is to force $v$, we can instead let $y$ force $v$.  This again gives an optimal forcing sequence in which $z$ does not force or get forced, 
implying that $z$ is critical, and contradicting that $z$ is uncritical.
\end{proof}

\begin{prop}
\th\label{cor:deg_uncritical}
Let \( G \) be a graph, let \( T \) be a clique in \( G \), and let \( z \) be a vertex of \( T \)
with \( \deg(z) = |T| - 1 \ge 1 \).
Suppose that \( z \) is the only vertex in
\( T \) of degree \(|T| - 1 \)
and that \( \overline{T} \) is a clique.
Then \( z \) is an
uncritical vertex.
\end{prop}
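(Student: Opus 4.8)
The plan is to reduce the statement to the claim that $z$ is \emph{not} critical and then invoke the dichotomy already in hand. Since $\deg(z)=|T|-1$ and $T$ is a clique, \thref{thm:criticaloruncritical} applies and tells us that $z$ is either critical or uncritical; hence it suffices to rule out the possibility that $z$ is critical. By \thref{thm:critical} this is the same as proving $Z(G\setminus z)=Z(G)$ (the inequality $Z(G\setminus z)\le Z(G)$ is already guaranteed by \thref{thm:criticaloruncritical}), so the whole proposition comes down to the single claim that deleting $z$ does not decrease the zero forcing number.

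I would argue this by contradiction. Suppose $z$ is critical, so there is an optimal zero forcing set $F$ with $z\in F$ and a complete forcing sequence $\sigma$ from $F$ in which $z$ lies in a chain of length $0$; that is, $z$ neither forces nor is forced. First I would record two structural consequences of the hypotheses. Because every neighbor of $z$ lies in the clique $T$, no vertex of $T$ can force any vertex other than $z$ while $z$ is unfilled, and consequently $z$ can never be an interior vertex of a forcing chain; thus isolation is genuinely the only way $z$ could fail to be a source or a terminal. Because $T$ is a clique, whenever a vertex of $T$ forces a vertex of $\overline T$ every vertex of $T$ except the forcer must already be filled; and because $\overline T$ is a clique, a vertex of $\overline T$ can force another vertex of $\overline T$ only when the forced vertex is the last unfilled vertex of $\overline T$. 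These two facts sharply constrain the order in which the vertices of $\overline T$ can be filled.

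The goal is then to use these constraints, together with the uniqueness hypothesis, to show that the isolated and permanently filled vertex $z$ can be eliminated, contradicting the optimality of $F$. Concretely, I would run the forces of $\sigma$ from the set $F\setminus\{z\}$ in $G$ and show that the clique structure of $\overline T$ drives the process into a state in which some vertex $v\in T\setminus\{z\}$ has $z$ as its unique unfilled neighbor; at that point $v$ forces $z$, every vertex of $T$ is unblocked, and the remaining forces of $\sigma$ may proceed to fill $G$. This would produce a zero forcing set of size $|F|-1$, contradicting $|F|=Z(G)$. The two hypotheses enter precisely to make this rerouting possible, and the situations to exclude are exactly the two that break it: a second vertex of $T$ of degree $|T|-1$, which behaves as a twin of $z$ and around which one cannot reroute (this is the regime of \thref{cor:deg_critical}, where $z$ really is critical), and a non-clique $\overline T$, in which a chain can pass from $\overline T$ into $T$ and back into $\overline T$, bypassing $z$ entirely.

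The main obstacle is precisely the bookkeeping described in the previous paragraph: showing that, while $z$ is unfilled and therefore blocks every vertex of $T$ from forcing anything but $z$, the forces available from vertices of $\overline T$ already suffice to fill enough of $T\setminus\{z\}$ and of $\overline T$ that some vertex of $T\setminus\{z\}$ becomes able to force $z$. This is where the clique structure of $\overline T$ and the fact that $z$ is the \emph{unique} vertex of $T$ with no neighbor in $\overline T$ must be used together in an essential way. Carrying it out may be cleanest by choosing $F$ and $\sigma$ extremally, for instance by first replacing $F$ with a reversal as in \thref{lem:reversal of direct ZF set}, or by passing to the saturated subgraph obtained after deleting the offending vertices so that \thref{normalized forcing for cobipartite} can be brought to bear on the order of forces.
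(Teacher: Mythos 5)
Your setup is sound and matches the paper's first move: invoke \thref{thm:criticaloruncritical} to know $z$ is critical or uncritical, then argue by contradiction from a supposed optimal zero forcing set $F$ containing $z$ and a complete forcing sequence $\sigma$ in which $z$ neither forces nor is forced. But the core of your argument is exactly the part you defer. Your single proposed mechanism --- run $\sigma$ from $F\setminus\{z\}$ and show the process must reach a state where some $v\in T\setminus\{z\}$ has $z$ as its unique unfilled neighbor --- is precisely the claim that needs proof, and it does not go through by direct bookkeeping. Consider the case $T\subseteq F$. Starting from $F\setminus\{z\}$, every force in $\sigma$ performed by a vertex of $T$ is blocked (its forcer is adjacent to the unfilled $z$), so only the $\overline{T}$-forces of $\sigma$ can be replayed. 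The replay stalls the first time $\sigma$ calls for a vertex $v_i\in T$ to force some $w_i\in\overline{T}$ while at least two vertices of $\overline{T}$ remain unfilled: then $v_i$ has two unfilled neighbors ($z$ and $w_i$), every filled vertex of $\overline{T}$ has at least two unfilled neighbors (since $\overline{T}$ is a clique), and nothing guarantees that some vertex of $T\setminus\{z\}$ has no unfilled neighbor in $\overline{T}$. Ruling out this stalled configuration is not a matter of bookkeeping; it requires a different contradiction, which is what the paper supplies.

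Indeed, the paper's proof does not show that $F\setminus\{z\}$ forces, except in one subcase. When $T\subseteq F$, the contradiction comes from the opposite mechanism to yours: $z$ \emph{acts as a forcer}. Since all of $z$'s neighbors lie in $T\subseteq F$, the set $F\setminus\{x\}$ (for any $x\in T$, $x\neq z$) is a zero forcing set, with $z\to x$ as the first force --- contradicting optimality of $F$. When $T\not\subseteq F$ and the first unfilled vertex $v\in T$ is forced from within $T$, the paper swaps a vertex $b\in\overline{T}$ for $v$ to produce an optimal set containing $T$, reducing to the previous case; here both hypotheses (uniqueness of $z$, and $\overline{T}$ a clique) are used to locate $b$. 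Only in the remaining subcase, where $v$ is forced from $\overline{T}$, does the paper establish your claim that $F\setminus\{z\}$ is a zero forcing set with $z$ forced last. So your proposal is not a complete proof: it identifies the correct framework and the correct final contradiction target in one of three regimes, but the case analysis and the two alternative exchange arguments --- which are the substance of the paper's proof --- are missing, and your suggested repairs (reversals, or passing to a saturated subgraph and \thref{normalized forcing for cobipartite}) are not developed and would not obviously fill the hole.
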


\begin{proof}
Suppose \( \deg(z) = |T|-1 \) and no other vertex in \( T \) has this property.
\thref{thm:criticaloruncritical} gives that
\( z \) is either
critical or uncritical.
Suppose for the sake of contradiction that \( z \) is critical; from some optimal zero forcing set \( F \), fix an optimal forcing sequence in which \( z \) does not force or get forced.
As 
\(\deg(z) = |T| - 1 \ge 1\),
we have $|T| \ge 2$.

Case 1: \( T \subseteq F \). Since
$|T|\ge 2$, there exists \( x\in T \) with \( x \neq z \). Consider the set \( F \setminus \{ x \} \).
Since \( z \in T \), \( z \neq x \), and \( T \subseteq F \), it follows that \( z \in F
\setminus \{ x \} \). Since \( \deg(z) = |T| -1 \) and \( z \)
is in the clique \( T \), it follows
that \( N(z) \setminus \{ x \} \subseteq T \setminus \{ x \} \subseteq F \setminus \{ x \} \). Hence, with \( F \setminus \{ x \} \) as the initially filled set of vertices, \( z \to x \) can be the first force, after which the forcing can proceed as in the original sequence from \( F \).  But this gives a forcing sequence from \( F \setminus \{x\} \) that ends with all vertices filled, which contradicts that \( F \) is an optimal zero forcing set for \(
G \).

Case 2: \( T \not\subseteq F \).  Then some \( v \in T \) is initially unfilled. First suppose that there does not exist an optimal forcing sequence from \( F \) in which a vertex in \( \overline{T} \) can force \( v \). Hence, some \( x
\in T\) forces \( v \).
By our assumptions on \( z \), neither \( x \) nor \( v \) is equal to \( z \).  In particular, each of \( x \) and \( v \) has degree greater than \( |T| - 1 \), and so is adjacent to some vertex in \( \overline{T} \).  Hence, let \( a \in \overline{T} \) be adjacent to \( v \) and let \( b \in \overline{T} \) be adjacent to \( x \).

Since \( T \) is a clique, at the point when \( x
\) forces \( v \), it must be that \( T \setminus \{ v \} \) is filled.
At that point, by our assumption in this case, \( a \) cannot force \( v \).  Hence, either \( a \) is unfilled, or some neighbor \( w \neq v \) of \( a \) is unfilled.  In the latter case, \( w \in \overline{T} \), since every vertex in \( T \setminus \{ v \} \) is filled.  Either way, we have that \( \overline{T} \) contains some unfilled vertex.
It follows that no vertex in \( \overline{T} \) could have forced before \( x \) forced \( v \). Similarly, since \( T \) is a clique and
\( v \) is unfilled, no vertex in \( T \) could have forced either.
Thus, \( x \) is the first vertex to force, implying that \( b \in F \). 
Hence, let \( F' = (F \setminus \{ b \} ) \cup \{ v \}  \) and consider the sequence of forces from
\( F' \) where \( x \to b \) is the first force and the rest of the forces occur as in the forcing sequence from
\( F \). This shows that \( F' \) is an optimal zero forcing set such that \( T \subseteq F' \). By the argument of Case 1, this gives a contradiction.

Now suppose \( v \) gets forced by a vertex in \( \overline{T} \). Since \( T
\) is a clique and \( v \) is unfilled, all vertices that force before \( v \)
is forced must be in \( \overline{T} \). Hence, the forces that happen before \(
v\) gets forced are not dependent on \( z \) being filled. Since \( T \) is a
clique, if a vertex \( x \in T \) gets forced by a vertex in \( T \), without
loss of generality say \( v \) forces \( x \). Then \( x \) must be the last
vertex to get forced in \( G \) since \( \overline{T} \) is filled when \( x \)
got forced and \( T \) is a clique. By construction of \( G \), there exists \(
b \in \overline{T} \) such that \( x \) is adjacent to \( b \). Thus, \( b \)
can force \( x \). This shows there exists a complete forcing sequence from \( F \) in
which no vertex in \( T \) forces. Hence, all forces in the zero forcing
sequence are independent of \( z \) being filled. Thus, \( F \) can force \(
V(G)
\setminus \{ z \} \) without \( z \) being filled. Therefore, \( F \setminus \{
z \} \) can force \( V(G) \setminus \{ z \} \) and \( v \) can force \( z \) last.
This shows that \( F \setminus \{ z \} \) is a zero forcing set, which
contradicts that \( F \) is an optimal zero forcing set.
\end{proof}

\begin{ex}
\th\ref{cor:deg_uncritical} requires that the set of vertices outside of the clique $T$ must themselves form a clique (implying that the graph is cobipartite).  In \th\ref{cor:deg_critical}, the conditions on those vertices are weaker.  The graph in Figure \ref{fig:neighbors of T condition} shows that these weaker conditions would not be enough for \th\ref{cor:deg_uncritical}.
The clique $T$ indicated there satisfies the conditions of \th\ref{cor:deg_critical}.  In fact, it satisfies a stronger condition:\ the set of vertices outside $T$ that are adjacent to some vertex in $T$ itself induces a clique.
Nevertheless, vertex $1$ is critical (and therefore not uncritical), since 
there is an optimal zero forcing set (the vertices shown as filled in the figure) from which there is a complete sequence of forces such that vertex $1$ neither forces nor gets forced.

\begin{figure}
    \centering
\begin{tikzpicture}[thick, main node/.style={circle,minimum width=7pt,draw,inner sep=1pt}, filled node/.style={fill={lightgray},circle,minimum width=7pt,draw,inner sep=1pt}]
\def \sf {1.2}
\draw (-1.75*\sf,-1.25*\sf) node {$T$};
\draw
    (-1.56*\sf,0*\sf) node[filled node] (1) {\scriptsize 1}
    (0*\sf,1*\sf) node[main node] (2) {\scriptsize 2}
    (0*\sf,-1*\sf) node[main node] (3) {\scriptsize 3}
    (2*\sf,0.5*\sf) node[main node] (4) {\scriptsize 4}
    (2*\sf,-0.5*\sf) node[main node] (5) {\scriptsize 5}
    (3*\sf,1.25*\sf) node[filled node] (6) {\scriptsize 6}
    (3*\sf,-1.25*\sf) node[filled node] (7) {\scriptsize 7}
    (4*\sf,0.75*\sf) node[filled node] (8) {\scriptsize 8}
    (4*\sf,-0.75*\sf) node[main node] (9) {\scriptsize 9}
    ;
\draw[color=gray,dashed] (-2*\sf,-1.5*\sf) rectangle (0.5*\sf,1.5*\sf);
\path
    (1) edge (2)
    	edge (3)
    (2) edge (3)
    	edge (4)
    (3) edge (5)
    (4) edge (5)
    	edge (8)
    (5) edge (4)
	(6) edge (4)
		edge (8)
	(5) edge (7)
		edge (9);
\end{tikzpicture}
    \caption{Graph showing that \thref{cor:deg_uncritical} requires a stronger condition on the vertices outside of $T$ than does \thref{cor:deg_critical}.}
    \th\label{fig:neighbors of T condition}
\end{figure}
\end{ex}

\begin{defn}\th\label{unsaturated}
When a graph $G$ is not saturated with respect to a partition $(V_1,V_2)$ of
its vertex set, this is because some vertex in $V_1$ (or, respectively, $V_2$)
is not adjacent to any vertex in $V_2$ (respectively, $V_1$).  We say that
such a vertex is \defterm{unsaturated} with respect to that partition.  Where there is no risk of ambiguity and the partition can be understood from the context, we may simply refer to it as an \defterm{unsaturated vertex}.
\end{defn}

\begin{thm}
\thlabel{thm:Aunsaturated}
Let $G$ be a graph that is cobipartite with clique partition $(V_1,V_2)$
such that
\( V_1 \) has $k_1 \ge 1$ unsaturated vertices.  Let $H$ be the graph obtained by deleting all of the
unsaturated vertices in \( V_1 \). Then
\[
Z(G) = Z(H) + k_1 - 1.
\]
\end{thm}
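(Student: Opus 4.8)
The plan is to remove the unsaturated vertices of $V_1$ one at a time and to track the resulting change in the zero forcing number using the critical/uncritical framework of \thref{thm:critical} and \thref{prop:uncritical}. The key structural observation is that a vertex of the clique $V_1$ has no neighbor in $V_2$ exactly when its degree equals $|V_1|-1$; hence the unsaturated vertices of $V_1$ are precisely the vertices of the clique $T=V_1$ having the minimum possible degree $|T|-1$, which is the hypothesis shared by \thref{cor:deg_critical} and \thref{cor:deg_uncritical}. Writing the unsaturated vertices as $u_1,\ldots,u_{k_1}$, I would set $G_0=G$ and $G_i=G_{i-1}\setminus u_i$, so that $G_{k_1}=H$. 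Because each $u_j$ has no neighbor in $V_2$, deleting it removes no edge incident to $V_2$; thus at every stage the current clique is $T_{i-1}=V_1\setminus\{u_1,\ldots,u_{i-1}\}$, its complement is still $V_2$, the neighbors of each vertex outside $T_{i-1}$ lie in the clique $V_2$, and $\deg_{G_{i-1}}(u_i)=|T_{i-1}|-1$ throughout.

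Assuming first that $V_1$ contains a saturated vertex, I would show that the first $k_1-1$ deletions remove critical vertices and the last removes an uncritical vertex. For $1\le i\le k_1-1$, the clique $T_{i-1}$ still contains the other unsaturated vertices $u_{i+1},\ldots,u_{k_1}$, so $u_i$ is not the unique vertex of degree $|T_{i-1}|-1$; it also contains a saturated vertex, which has a neighbor in $V_2$ lying outside $T_{i-1}$; and the outside-neighbors of each vertex form a clique within $V_2$. These are exactly the hypotheses of \thref{cor:deg_critical}, so $u_i$ is critical and $Z(G_i)=Z(G_{i-1})-1$ by \thref{thm:critical}. For the final deletion, $T_{k_1-1}$ consists of $u_{k_1}$ together only with saturated vertices, so $u_{k_1}$ is now the unique vertex of degree $|T_{k_1-1}|-1\ge 1$, and its complement $V_2$ is a clique; these are the hypotheses of \thref{cor:deg_uncritical}, so $u_{k_1}$ is uncritical and $Z(H)=Z(G_{k_1-1})$ by \thref{prop:uncritical}. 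Chaining the $k_1-1$ unit decreases with this final equality yields $Z(G)=Z(H)+k_1-1$.

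The remaining situation, in which every vertex of $V_1$ is unsaturated, I would dispatch directly: then $V_1$ induces a clique $K_{k_1}$ joined to no vertex outside it, so $G$ is the disjoint union of $K_{k_1}$ with $H$, and additivity of the zero forcing number over components together with $Z(K_{k_1})=k_1-1$ (valid once $k_1\ge 2$) again gives the identity. I expect the main obstacle to be the bookkeeping ensuring the hypotheses of \thref{cor:deg_critical} and \thref{cor:deg_uncritical} genuinely persist at each stage of the deletion — in particular that the surviving unsaturated vertices keep degree exactly one less than the shrinking clique and that the clique's complement remains $V_2$ — together with isolating the precise step at which the last unsaturated vertex switches from critical to uncritical. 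The only real boundary to keep in view is a lone isolated vertex ($k_1=1$ with $V_1=\{u_1\}$), where $Z(K_1)=1$ rather than $0$, so the identity should be read under the standing understanding that no such trivial component arises.
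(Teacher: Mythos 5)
Your proposal is correct, and its core is exactly the paper's own argument: delete the unsaturated vertices of $V_1$ one at a time, apply \thref{cor:deg_critical} together with \thref{thm:critical} to every deletion before the last (each drops $Z$ by one), and apply \thref{cor:deg_uncritical} together with \thref{prop:uncritical} to the final deletion (which leaves $Z$ unchanged); your bookkeeping that the surviving unsaturated vertices keep degree one less than the shrinking clique and that outside neighbors stay inside the clique $V_2$ is the same bookkeeping the paper leaves implicit. Where you go beyond the paper is in the case split, and this split is genuinely needed rather than mere caution. \thref{cor:deg_critical} hypothesizes that some vertex of $T$ has a neighbor outside of $T$, which fails precisely when every vertex of $V_1$ is unsaturated; the paper's proof does not address this case at all, and there the step-by-step accounting is not even literally correct (for $G = K_2 \sqcup H$, deleting the first vertex of $K_2$ leaves $Z$ unchanged and deleting the resulting isolated vertex drops it by one, the reverse of the claimed pattern, though the total change is right). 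Your disjoint-union argument, $Z(K_{k_1} \sqcup H) = Z(K_{k_1}) + Z(H) = (k_1 - 1) + Z(H)$ for $k_1 \ge 2$, handles this cleanly. Moreover, your final caveat is a genuine counterexample to the theorem as stated: if $V_1 = \{u_1\}$ with $u_1$ isolated, then $k_1 = 1$ but $Z(G) = Z(H) + 1 \neq Z(H) + k_1 - 1$. The paper's proof silently excludes this because \thref{cor:deg_uncritical} requires $\deg(z) = |T| - 1 \ge 1$, but the theorem's hypotheses do not. So your write-up is not merely equivalent to the paper's proof; it repairs two boundary cases that the paper overlooks.
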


\begin{proof}
Note that the unsaturated vertices of \(V_1\) are precisely those with  degree \( |V_1| - 1 \).
We may delete them one at a time.  
Each of these deletions that does not remove the last such vertex decreases the
zero forcing number by \(1\), by \th\ref{cor:deg_critical,thm:critical}.
Finally, when there is exactly one such vertex remaining, removing it has no
effect on the zero forcing number, by \th\ref{cor:deg_uncritical,prop:uncritical}.  Hence, \(Z(H)=Z(G)-(k_1-1)\), from which the result follows.
\end{proof}

\begin{thm}\th\label{thm:remove_unsaturated_vertices_effect_on_Z}
Let $G$ be a graph that is cobipartite with clique partition $(V_1,V_2)$ such
that the number of unsaturated vertices in $V_1$ and the number in \( V_2 \) are \( k_1 \ge 1
\) and \( k_2 \ge 1 \), respectively. Let $H$ be the graph obtained by deleting all of the
unsaturated vertices in \( G \). Then
\[
Z(G) = Z(H) + (k_1 - 1) + (k_2 - 1).
\]
\end{thm}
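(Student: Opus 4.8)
The plan is to reduce to \thref{thm:Aunsaturated} by deleting the unsaturated vertices of $V_1$ and those of $V_2$ in two separate stages. First I would let $G_1$ be the graph obtained from $G$ by deleting all $k_1$ unsaturated vertices in $V_1$. Since $k_1 \ge 1$, applying \thref{thm:Aunsaturated} directly yields
\[
Z(G) = Z(G_1) + (k_1 - 1).
\]
The graph $G_1$ remains cobipartite, with clique partition $(V_1',V_2)$, where $V_1'$ denotes $V_1$ with its unsaturated vertices removed (a subset of the clique $V_1$, hence still a clique).

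The one point requiring a short argument—and the step I expect to be the main (if mild) obstacle—is verifying that the deletion carried out in the first stage does not disturb the unsaturated vertices of $V_2$, so that $G_1$ still has exactly $k_2$ unsaturated vertices in $V_2$. Here I would use the defining property that an unsaturated vertex of $V_1$ has no neighbor in $V_2$. Consequently, for any $v \in V_2$, every neighbor of $v$ lying in $V_1$ is a \emph{saturated} vertex of $V_1$, so that the neighbors of $v$ in $V_1$ all survive into $V_1'$; that is, $v$ has a neighbor in $V_1$ if and only if it has a neighbor in $V_1'$. Hence $v$ is unsaturated with respect to $(V_1,V_2)$ exactly when it is unsaturated with respect to $(V_1',V_2)$, and the count of unsaturated vertices in $V_2$ is preserved as $k_2$.

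With this in hand, I would apply \thref{thm:Aunsaturated} a second time to $G_1$, now with the roles of the two cliques interchanged, deleting the $k_2 \ge 1$ unsaturated vertices of $V_2$. Writing $H$ for the resulting graph—which is precisely the graph obtained from $G$ by deleting all of its unsaturated vertices—this gives
\[
Z(G_1) = Z(H) + (k_2 - 1).
\]
Combining the two displays produces
\[
Z(G) = Z(G_1) + (k_1 - 1) = Z(H) + (k_2 - 1) + (k_1 - 1),
\]
which is the desired identity. The only substantive work beyond this bookkeeping is the observation that the two clique sides can be treated independently, which is exactly what the neighbor-preservation argument of the second paragraph establishes.
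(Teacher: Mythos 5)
Your proposal is correct and follows essentially the same route as the paper: delete the unsaturated vertices of $V_1$ first, apply \thref{thm:Aunsaturated}, then apply it a second time (with the cliques' roles swapped) to remove the unsaturated vertices of $V_2$, and combine the two equations. In fact, your neighbor-preservation argument makes explicit a point the paper's proof leaves implicit, namely that the unsaturated vertices of $V_2$ remain unsaturated (and no new ones are created) after the first deletion, since unsaturated vertices of $V_1$ have no neighbors in $V_2$.
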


\begin{proof}
Let $H'$ be the result of deleting from $G$ all of the unsaturated vertices in $V_1$.  Then the unsaturated vertices of $V_2$ still exist in $H'$, and $H$ is the result of deleting them from $H'$.  One application of \thref{thm:Aunsaturated} gives $Z(G)=Z(H')+(k_1-1)$, while a second application (applied to $H'$) gives $Z(H')=Z(H)+(k_2-1)$.  Combining these equations gives the desired result.
\end{proof}

For many of the results regarding cobipartite graphs in Section \ref{sec:cobipartite}, an essential hypothesis was that the graph be saturated with respect to some partition.
We are now prepared to see that in most cases, this assumption is in fact essential.  For instance, the next result shows that this hypothesis cannot be dropped from \th\ref{thm:triangle number versus ZF for cobipartite}.

\begin{prop}\thlabel{prop:unsaturated_bound_for_Z_one}
Let \( \Y \) be an \( m\times n \) zero-nonzero pattern and let \( G \) be the
cobipartite graph associated with \( \Y \), with \( (V_1,V_2)  \) the corresponding clique
partition of \( G \). If \(
V_1 \) contains \( k_1 \ge 1 \) unsaturated vertices and \( V_2 \) contains no
unsaturated vertices, then   
\[
Z(G) = m + n - \tri(\Y) -1. 
\]
\end{prop}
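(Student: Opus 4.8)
The plan is to reduce to the saturated case, which is already handled by \thref{thm:triangle number versus ZF for cobipartite}, and to absorb the effect of the unsaturated vertices using \thref{thm:Aunsaturated}.

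First I would translate the hypotheses into the language of $\Y$. Since $(V_1,V_2)$ is a partition of $G$ according to $\Y$, a vertex $r_i\in V_1$ is unsaturated exactly when row $i$ of $\Y$ is entirely zero, while the assumption that $V_2$ has no unsaturated vertex says that every column of $\Y$ contains a nonzero entry. Let $H$ be the graph obtained from $G$ by deleting all $k_1$ unsaturated vertices of $V_1$. Because $V_1$ has $k_1\ge 1$ unsaturated vertices, \thref{thm:Aunsaturated} applies and gives $Z(G)=Z(H)+k_1-1$.

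Next I would identify $H$ as a \emph{saturated} cobipartite graph associated with a smaller pattern. Let $\Y'$ be the $(m-k_1)\times n$ pattern obtained from $\Y$ by deleting the $k_1$ all-zero rows, and let $V_1'$ denote the set of surviving vertices of $V_1$. The edges of $H$ crossing $(V_1',V_2)$ are unchanged from $G$, so $(V_1',V_2)$ is a partition of $H$ according to $\Y'$, and since $V_1'$ and $V_2$ are cliques, $H$ is the cobipartite graph associated with $\Y'$. The step that needs care is checking that $H$ is saturated with respect to $(V_1',V_2)$: each surviving vertex of $V_1'$ is saturated by construction, and no vertex of $V_2$ can lose saturation, because every deleted vertex was, by the meaning of unsaturated, already non-adjacent to all of $V_2$, so its removal leaves the adjacencies between $V_2$ and the surviving vertices of $V_1$ unchanged. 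This is the one point I expect to require a careful argument.

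Finally I would apply \thref{thm:triangle number versus ZF for cobipartite} to $H$ and $\Y'$, after observing that $\tri(\Y')=\tri(\Y)$. The latter holds because an all-zero row can never belong to a triangle submatrix—every row of a triangle carries the $\ast$ on the triangle's diagonal—so deleting such rows cannot disturb a largest triangle. Thus $Z(H)=(m-k_1)+n-\tri(\Y')=(m-k_1)+n-\tri(\Y)$, and substituting into $Z(G)=Z(H)+k_1-1$ yields $Z(G)=m+n-\tri(\Y)-1$, as desired.
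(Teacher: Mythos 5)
Your proposal is correct and follows essentially the same route as the paper's own proof: delete the unsaturated vertices of $V_1$, invoke \thref{thm:Aunsaturated} to get $Z(G)=Z(H)+k_1-1$, observe that $H$ is the saturated cobipartite graph associated with the pattern $\Y'$ obtained by deleting the all-zero rows (with $\tri(\Y')=\tri(\Y)$), and then apply \thref{thm:triangle number versus ZF for cobipartite}. In fact you supply slightly more justification than the paper does for the saturation of $H$ and for the invariance of the triangle number, both of which the paper asserts without detailed argument.
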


\begin{proof}
Let $\Y'$ be the pattern that results from deleting from $\Y$ each row and
column consisting entirely of zeros. Then the size of $\Y'$ is $m' \times n'$ where
$m'=m-k_1$. Moreover, $\tri(\Y)=\tri(\Y')$.
Let $G'$ be the result of deleting all unsaturated vertices from $G$.
Then $G'$ is the cobipartite graph associated with $\Y'$ and is saturated with respect to the corresponding clique partition.
Hence, \th\ref{thm:triangle number versus ZF for cobipartite} gives
\begin{equation}\label{eqn:Z_Gprime1}
Z(G') = m' + n - \tri(\Y') \\
    = (m-k_1) + n - \tri(\Y).    
\end{equation}
Using \th\ref{thm:Aunsaturated}, we have
\begin{equation}\label{eqn:Z_Gprime2}
Z(G') = Z(G) - (k_1-1).
\end{equation}
Combining \eqref{eqn:Z_Gprime1} and \eqref{eqn:Z_Gprime2} then gives
\[
Z(G)-(k_1-1) = (m-k_1) + n - \tri(\Y),
\]
which simplifies to
\(Z(G) = m+ n - \tri(\Y) - 1\).
\end{proof}

\begin{prop}\thlabel{prop:unsaturated_bound_for_Z}
Let \( \Y \) be an \( m\times n \) zero-nonzero pattern and let \( G \) be the
cobipartite graph associated with \( \Y \), with \( (V_1,V_2)  \) the corresponding clique
partition of \( G \). If \(
V_1 \) contains \( k_1 \ge 1 \) unsaturated vertices and \( V_2 \) contains \(k_2 \ge 1\)
unsaturated vertices, then   
\[
Z(G) = m + n - \tri(\Y) -2. 
\]
\end{prop}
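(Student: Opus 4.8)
The plan is to follow exactly the strategy used for \thref{prop:unsaturated_bound_for_Z_one}, but now deleting unsaturated vertices from both cliques and invoking \thref{thm:remove_unsaturated_vertices_effect_on_Z} in place of \thref{thm:Aunsaturated}.

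First I would let $\Y'$ be the pattern obtained from $\Y$ by deleting every row and every column that consists entirely of zeros. Since the row vertices of $\Y$ correspond to $V_1$ and the column vertices correspond to $V_2$, the all-zero rows are precisely the $k_1$ unsaturated vertices of $V_1$ and the all-zero columns are precisely the $k_2$ unsaturated vertices of $V_2$. Hence $\Y'$ has size $(m-k_1)\times(n-k_2)$. I would then observe that $\tri(\Y)=\tri(\Y')$, since a row or column of all zeros can never lie within a triangle submatrix (such a submatrix must have $*$ entries along its diagonal).

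Next, let $H$ be the graph obtained by deleting all unsaturated vertices from $G$. Then $H$ is the cobipartite graph associated with $\Y'$, and it is saturated with respect to the corresponding clique partition. Applying \thref{thm:triangle number versus ZF for cobipartite} to $H$ gives
\[
Z(H) = (m-k_1)+(n-k_2)-\tri(\Y').
\]
On the other hand, since $V_1$ has $k_1\ge 1$ unsaturated vertices and $V_2$ has $k_2 \ge 1$, the hypotheses of \thref{thm:remove_unsaturated_vertices_effect_on_Z} are met, giving
\[
Z(G) = Z(H)+(k_1-1)+(k_2-1).
\]
Combining these two equations (using $\tri(\Y')=\tri(\Y)$) and simplifying then yields $Z(G)=m+n-\tri(\Y)-2$, as desired.

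The main obstacle is essentially bookkeeping, since nearly all of the substance is already carried by \thref{thm:triangle number versus ZF for cobipartite} and \thref{thm:remove_unsaturated_vertices_effect_on_Z}. The only points requiring genuine care are verifying that deleting all-zero rows and columns preserves the triangle number, and confirming that the reduced graph $H$ is precisely the saturated cobipartite graph associated with $\Y'$, so that the earlier theorem applies without modification.
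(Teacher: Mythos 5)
Your proposal is correct and follows essentially the same route as the paper's own proof: delete the all-zero rows and columns of $\Y$ to form $\Y'$, note that $\tri(\Y')=\tri(\Y)$, apply \thref{thm:triangle number versus ZF for cobipartite} to the saturated graph obtained by removing the unsaturated vertices, and then transfer back to $Z(G)$ via \thref{thm:remove_unsaturated_vertices_effect_on_Z}. The only difference is cosmetic—you spell out slightly more explicitly why the unsaturated vertices correspond to the all-zero rows and columns and why such rows and columns cannot meet a triangle, which the paper leaves implicit.
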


\begin{proof}
Let $\Y'$ be the pattern that results from deleting from $\Y$ each row and
column consisting entirely of zeros. Then the size of $\Y'$ is $m' \times n'$ where
$m'=m-k_1$ and \( n' = n - k_2 \). Moreover, $\tri(\Y)=\tri(\Y')$.
Let $G'$ be the result of deleting all unsaturated vertices from $G$.
Then $G'$ is the cobipartite graph associated with $\Y'$ and is saturated with respect to the corresponding clique partition.
Hence, \th\ref{thm:triangle number versus ZF for cobipartite} gives
\begin{equation}\label{eqn:Z_Gprime1_two}
Z(G') = m' + n' - \tri(\Y') \\
    = (m-k_1) + (n-k_2) - \tri(\Y).    
\end{equation}
Using \th\ref{thm:remove_unsaturated_vertices_effect_on_Z}, we have
\begin{equation}\label{eqn:Z_Gprime2_two}
Z(G') = Z(G) - (k_1-1) - (k_2-1).
\end{equation}
Combining \eqref{eqn:Z_Gprime1_two} and \eqref{eqn:Z_Gprime2_two} then gives
\[
Z(G)-(k_1-1)-(k_2-1) = (m-k_1) + (n-k_2) - \tri(\Y),
\]
which simplifies to
\(Z(G) = m+ n - \tri(\Y) - 2\).
\end{proof}

The next result now follows immediately from \th\ref{thm:triangle number versus
ZF for cobipartite,prop:unsaturated_bound_for_Z_one,prop:unsaturated_bound_for_Z}.

\begin{thm}\th\label{thm:ZF and triangle number saturated or not}
Let $\Y$ be an $m\times n$ zero-nonzero pattern and let $G$ be
the cobipartite graph associated with $\Y$.
Then $G$ is saturated with respect to the corresponding clique partition if and only if   
\[
Z(G) = m + n - \tri(\Y).
\]
\end{thm}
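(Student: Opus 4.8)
The plan is to establish the biconditional through an exhaustive case analysis driven by the unsaturated vertices of the two cliques in the partition $(V_1,V_2)$ of $G$ associated with $\Y$. I would first recall that an unsaturated vertex of $V_1$ corresponds to an all-zero row of $\Y$ and an unsaturated vertex of $V_2$ to an all-zero column. The three cases (neither side unsaturated, exactly one side unsaturated, and both sides unsaturated) partition all possibilities, and each is handled by exactly one of the three cited results.

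For the forward implication, I would observe that if $G$ is saturated then neither $V_1$ nor $V_2$ contains an unsaturated vertex, so \thref{thm:triangle number versus ZF for cobipartite} applies directly and yields $Z(G)=m+n-\tri(\Y)$.

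For the converse I would argue the contrapositive: assuming $G$ is not saturated, I would show $Z(G)\neq m+n-\tri(\Y)$. Failing to be saturated means at least one of $V_1,V_2$ has an unsaturated vertex, which splits into two subcases. If exactly one side is unsaturated, then \thref{prop:unsaturated_bound_for_Z_one} gives $Z(G)=m+n-\tri(\Y)-1$; here I would handle the possibility that it is $V_2$ rather than $V_1$ that is unsaturated by passing to $\Y^T$, whose associated cobipartite graph is the same $G$ with the roles of the two cliques interchanged and which satisfies $\tri(\Y^T)=\tri(\Y)$. If both sides are unsaturated, then \thref{prop:unsaturated_bound_for_Z} gives $Z(G)=m+n-\tri(\Y)-2$. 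In either subcase $Z(G)<m+n-\tri(\Y)$, so the claimed equality fails, which is exactly what the contrapositive requires.

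Since the genuine combinatorial content has already been absorbed into the three cited results, I expect no substantial obstacle to remain; the argument is essentially an assembly. The only points demanding care are verifying that the three cases are mutually exclusive and exhaustive, and invoking the transpose symmetry so that the one-sided proposition, stated only for unsaturated vertices in $V_1$, also covers the symmetric situation in $V_2$.
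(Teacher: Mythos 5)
Your proposal is correct and matches the paper's own argument, which derives the theorem as an immediate consequence of the same three results (\thref{thm:triangle number versus ZF for cobipartite}, \thref{prop:unsaturated_bound_for_Z_one}, and \thref{prop:unsaturated_bound_for_Z}) via the same trichotomy on which sides of the clique partition contain unsaturated vertices. Your explicit handling of the transpose symmetry (when $V_2$ rather than $V_1$ is the unsaturated side) is a detail the paper leaves implicit, and it is handled correctly since $\tri(\Y^T)=\tri(\Y)$.
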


The above results show how the condition of $G$ being saturated is essential for the relationship between $Z(G)$ and $\tri(\Y)$ given by \thref{thm:triangle number versus ZF for cobipartite}.  Example \ref{ex:bounds not saturated example} showed that this condition is essential also for \thref{thm:not necessarily symmetric min rank result}, which gives the same relationship for the parameters $M(G)$ and $\mr(\Y)$.  The following corollary of \thref{prop:unsaturated_bound_for_Z} allows that example to be generalized.

\begin{cor}\th\label{cor:unsaturated-tri-neq-mr}
Let \( \Y \) be an \( m\times n \) zero-nonzero pattern and let \( G \) be the cobipartite graph associated with \( \Y \). If \( G \) is not saturated with respect to the corresponding clique partition, then
\[
M(G) < m+n-\tri(\Y),
\]
or, equivalently, \( \tri(\Y) < \mr(G) \). \end{cor}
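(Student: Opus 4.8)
The plan is to obtain the strict inequality directly from the exact formulas for $Z(G)$ that were already established in the unsaturated case, combined with the universal bound $M(G) \le Z(G)$. First I would note that since $G$ is the cobipartite graph associated with $\Y$, it has a clique partition $(V_1,V_2)$ with $|V_1|=m$ and $|V_2|=n$, and that $G$ failing to be saturated means at least one of $V_1,V_2$ contains an unsaturated vertex. This divides into two cases.

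In the case where exactly one clique carries unsaturated vertices, I would (after relabeling so that it is $V_1$, which is harmless since the conclusion is symmetric in $m$ and $n$) apply \thref{prop:unsaturated_bound_for_Z_one} to get $Z(G) = m+n-\tri(\Y)-1$. In the case where both cliques carry unsaturated vertices, I would instead apply \thref{prop:unsaturated_bound_for_Z} to get $Z(G) = m+n-\tri(\Y)-2$. In either case the conclusion is $Z(G) \le m+n-\tri(\Y)-1$. Invoking the inequality $M(G) \le Z(G)$ from \thref{treewidthdiagram} then yields
\[
M(G) \le Z(G) \le m+n-\tri(\Y)-1 < m+n-\tri(\Y),
\]
which is the asserted strict inequality.

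For the equivalent formulation, I would use that $G$ has $|G| = m+n$ vertices, so $\mr(G) + M(G) = m+n$; substituting $M(G) = m+n-\mr(G)$ into $M(G) < m+n-\tri(\Y)$ and simplifying gives $\tri(\Y) < \mr(G)$. I do not expect a genuine obstacle here, as all the combinatorial content resides in \thref{prop:unsaturated_bound_for_Z_one} and \thref{prop:unsaturated_bound_for_Z}; the only points needing care are invoking the $m$–$n$ symmetry to cover the subcase where it is $V_2$ rather than $V_1$ that contains the unsaturated vertices, and verifying that the strictness is exactly what the $-1$ (or $-2$) in the formula for $Z(G)$ provides, thereby separating the unsaturated case from the saturated equality of \thref{thm:triangle number versus ZF for cobipartite}.
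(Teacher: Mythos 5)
Your proof is correct and follows essentially the same route as the paper: the paper cites \thref{thm:ZF and triangle number saturated or not} together with \thref{treewidthdiagram} to get $M(G) \le Z(G) < m+n-\tri(\Y)$, and that theorem is itself just the packaging of \thref{prop:unsaturated_bound_for_Z_one} and \thref{prop:unsaturated_bound_for_Z}, which you invoke directly. If anything, your version makes the strictness more transparent, since the explicit $-1$ or $-2$ in those propositions is exactly where the strict inequality comes from.
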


\begin{proof}
When $G$ is not saturated with respect to the clique partition corresponding to \(\Y\), we have
by \th\ref{treewidthdiagram,thm:ZF and triangle number saturated or not} that
\( M(G) \le {Z}(G) < m + n - \tri(\Y) \),
implying that \( \tri(\Y) < \mr(G) \).
\end{proof}

We may now regard Example \ref{ex:bounds not saturated example} as a special case of a more general situation.  In particular, consider any zero-nonzero pattern $\Y$ such that $\mr(\Y)=\tri(\Y)$.  (In particular, by the results of \cite{johnson_and_canto,johnson_and_link,johnson_and_zhang}, any pattern with fewer than $7$ rows will have this property.)  Add at least one row or column of all $0$ entries to $\Y$ to produce $\Y'$.  Then the cobipartite graph $G'$ associated with $\Y'$ will not be saturated with respect to the corresponding partition of $G'$ into cliques, and so \thref{cor:unsaturated-tri-neq-mr} can be applied to give \( \mr(\Y') = \tri(\Y') < \mr(G') \).

Finally, we turn our attention to \thref{thm:bounds_equality_equivalence}, which showed, in the case where $G$ is saturated, that over any infinite field, $M(G)=Z(G)$ is equivalent to $\mr(\Y)=\tri(\Y)$.  It is natural to ask whether this equivalence persists even in the unsaturated case.  In the present work, we were not able to settle this question completely.  The situation is as follows.

In the saturated case, \thref{thm:not necessarily symmetric min rank result} gives
\begin{equation}\label{eqn:M and mr relationship}
    M(G) = m+n-\mr(\Y),
\end{equation}
while \thref{thm:triangle number versus ZF for cobipartite} gives the same relationship for the corresponding combinatorial bounds, namely
\begin{equation}\label{eqn:Z and tri relationship}
    Z(G) = m+n-\tri(\Y).
\end{equation}
By \thref{thm:ZF and triangle number saturated or not}, the introduction of unsaturated vertices creates a gap between the left- and right-hand sides of \eqref{eqn:Z and tri relationship}.
This gap is 
given explicitly by \thref{prop:unsaturated_bound_for_Z_one,prop:unsaturated_bound_for_Z}, and the question is whether an equal gap always occurs between the two sides of \eqref{eqn:M and mr relationship}.
Adding unsaturated vertices to $G$ simply adds rows and columns of all $0$ entries to $\Y$, which clearly has no effect on the right-hand side of \eqref{eqn:M and mr relationship}; the question, then, rests entirely on how adding unsaturated vertices affects the maximum nullity, and whether the effect must match that on the zero forcing number.

\begin{q}\label{q:remove unsaturated effect on M}
Let $G$ be a graph that is cobipartite with clique partition $(V_1,V_2)$ such
that the number of unsaturated vertices in $V_1$ and the number in \( V_2 \) are \( k_1
\) and \( k_2 \), respectively.
Let $H$ be the graph obtained by deleting all
unsaturated vertices from \( G \). Must the equation
\[
M(G) =
	\begin{cases}
		 M(H) + (k_1 - 1) + (k_2 - 1) & \text{if $k_1 \ge 1$ and $k_2 \ge 1$,} \\
		 M(H) + (k_1 - 1) & \text{if $k_1 \ge 1$ and $k_2 = 0$}
	\end{cases}
\]
hold in every case?
\end{q}

In light of \thref{prop:unsaturated_bound_for_Z_one,prop:unsaturated_bound_for_Z}, the equivalence given by \thref{thm:bounds_equality_equivalence} between $M(G)=Z(G)$ and $\mr(\Y)=\tri(\Y)$ persists in the unsaturated case if and only if
Question \ref{q:remove unsaturated effect on M} has an affirmative answer.

\bibliographystyle{plainurl}
\bibliography{refs}

\end{document}